\renewcommand*{\backrefalt}[4]{%
    \ifcase #1 \footnotesize{(Not cited.)}%
    \or        \footnotesize{(Cited on page~#2)}%
    \else      \footnotesize{(Cited on pages~#2)}%
    \fi}
\theoremstyle{plain}
\newtheorem{theorem}{Theorem}[section]
\newtheorem{lemma}[theorem]{Lemma}
\theoremstyle{definition}
\theoremstyle{remark}
\icmltitlerunning{Convergence of Proximal Point and Extragradient-Based Methods Beyond Monotonicity: the Case of Negative Comonotonicity}
\definecolor{bgcolor}{rgb}{0.8,1,1}
\definecolor{bgcolor2}{rgb}{0.8,1,0.8}
\definecolor{niceblue}{rgb}{0.0,0.19,0.56}
\definecolor{PineGreen}{RGB}{0,110,51}
\definecolor{BrickRed}{RGB}{143,20,2}
\newcommand{\xmark}{{\color{BrickRed}\ding{55}}}%
\newcommand{\R}{\mathbb{R}}
\newcommand{\eqdef}{\stackrel{\text{def}}{=}}
\def\<#1,#2>{\left\langle #1,#2\right\rangle}
\newcommand{\algname}[1]{{\sf  #1}\xspace}
\renewcommand{\Re}{\mathrm{Re}}
\newcommand{\Sp}{\mathrm{Sp}}
\newcommand{\Id}{\mathrm{Id}}
\newcommand{\Tr}{\mathrm{Tr}}
\newcommand{\cM}{{\cal M}}
\newcommand{\cN}{{\cal N}}
\newcommand{\cO}{{\cal O}}
\newcommand{\CC}{\mathbb{C}}
\newcommand{\tx}{\widetilde{x}}
\newlength{\dhatheight}
\begin{document}

\twocolumn[
\icmltitle{Convergence of Proximal Point and Extragradient-Based Methods\\ Beyond Monotonicity: the Case of Negative Comonotonicity}



\icmlsetsymbol{equal}{*}

\begin{icmlauthorlist}
\icmlauthor{Eduard Gorbunov}{mbzuai,mipt}
\icmlauthor{Adrien Taylor}{inria}
\icmlauthor{Samuel Horv\'ath}{mbzuai}
\icmlauthor{Gauthier Gidel}{udem,cifar}
\end{icmlauthorlist}

\icmlaffiliation{mbzuai}{Mohamed bin Zayed University of Artificial Intelligence, UAE}
\icmlaffiliation{mipt}{Moscow Institute of Physics and Technology, Russia (part of this work was done while the author was a researcher at MIPT)}
\icmlaffiliation{udem}{Universit\'e de Montr\'eal and Mila, Canada}
\icmlaffiliation{cifar}{Canada CIFAR AI Chair}
\icmlaffiliation{inria}{INRIA \& D.I.\ \'Ecole Normale Sup\'erieure, CNRS \& PSL Research University, France}

\icmlcorrespondingauthor{Eduard Gorbunov}{eduard.gorbunov@mbzuai.ac.ae}

\icmlkeywords{Machine Learning, ICML}

\vskip 0.3in
]



\printAffiliationsAndNotice{}  

\begin{abstract}
    Algorithms for min-max optimization and variational inequalities are often studied under monotonicity assumptions. Motivated by non-monotone machine learning applications, we follow the line of works \citep{diakonikolas2021efficient,lee2021fast,pethick2022escaping, bohm2022solving} aiming at going beyond monotonicity by considering the weaker \emph{negative comonotonicity} assumption. In this work, we provide tight complexity analyses for the Proximal Point (\ref{eq:PP}), Extragradient (\ref{eq:EG}), and Optimistic Gradient (\ref{eq:OG}) methods in this setup, closing several questions on their working guarantees beyond monotonicity. In particular, we derive the first non-asymptotic convergence rates for \ref{eq:PP} under negative comonotonicity and star-negative comonotonicity and show their tightness via constructing worst-case examples; we also relax the assumptions for the last-iterate convergence guarantees for \ref{eq:EG} and \ref{eq:OG} and prove the tightness of the existing best-iterate guarantees for \ref{eq:EG} and \ref{eq:OG} via constructing counter-examples.
\end{abstract}

\section{Introduction}
\looseness=-1
The study of efficient first-order methods for solving variational inequality problems~(VIP) have known a surge of interest due to the development of recent machine learning~(ML) formulations involving multiple objectives. VIP appears in various ML tasks such as robust learning~\citep{ben2009robust}, adversarial training~\citep{madry2018towards}, Generative Adversarial Networks~\citep{goodfellow2014generative}, or games with decision-dependent data~\citep{narang2022multiplayer}.
In this work, we focus on unconstrained VIPs\footnote{We refer to \citep{gidel2019variational} for the details on how these formulations appear in the real-world problems.}, which we state formally in the slightly more general form of an \emph{inclusion problem}:
\begin{equation}
    \text{find } x^* \in \R^d \text{ such that } 0\in F(x^*), \tag{IP} \label{eq:MI}
\end{equation}
where $F:\R^d \rightrightarrows \R^d$ is some (possibly set-valued) mapping. In the sequel, we use the slightly abusive shorthand notation $F(x)$ to denote any particular image of $x$ by the mapping $F$, independently of $F$ being single-valued of not.

Among the main simple first-order methods under consideration for such problems, the extragradient method~(\ref{eq:EG})~\citep{korpelevich1976extragradient} and the optimistic gradient method~(\ref{eq:OG})~\citep{popov1980modification} occupy an important place. These two algorithms have been traditionally analyzed under the assumption that the considered operator is monotone and Lipschitz~\citep{korpelevich1976extragradient,popov1980modification} and are often interpreted as an approximation to the proximal point (\ref{eq:PP}) method~\citep{nemirovski2004prox,mokhtari2019proximal}. \ref{eq:PP} can be formally stated as an implicit iterative method generating a sequence $x^1,x^2,\ldots \in\R^d$ when initiated at some $x^0\in\R^d$:
\[     x^{k+1} = x^k - \gamma F(x^{k+1}), \tag{\algname{PP}} \]
for some well-chosen stepsize $\gamma\in\R$. When $F$ is single-valued, one can instead use explicit methods such as~\ref{eq:EG}:
\begin{equation}\tag{\algname{EG}}
    \begin{aligned}
    \tx^{k} &= x^k  - \gamma_1 F(x^k),\\
    x^{k+1} &= x^k - \gamma_2 F(\tx^k),
    \end{aligned}\quad \forall k \geq 0,
\end{equation}
or~\ref{eq:OG} with the additional initialization $\tx^{0}=x^0$:
\begin{equation}\tag{\algname{OG}}
    \begin{aligned}
    \tx^{k} &= x^k  - \gamma_1 F(\tx^{k-1}),\quad \forall k > 0,\\
    x^{k+1} &= x^k - \gamma_2 F(\tx^k),\quad \forall k \geq 0,
    \end{aligned}
\end{equation}
where $\gamma_1,\gamma_2\in\R$ are some well-chosen stepsizes. For examples of the usage of extragradient-based methods in practice, we refer to \citep{daskalakis2018training} who use a variant of \ref{eq:OG} with \algname{Adam} \citep{kingma2014adam} estimators to train WGAN \citep{gulrajani2017improved} on CIFAR10 \citep{krizhevsky2009learning}, \citep{brown2019solving} where extragradient-based methods were applied in regret matching, \citep{farina2019stable} for the application to counterfactual regret minimization, and \citep{anagnostides2022last} where these methods were used for training agents to play poker.

Interestingly, until recently, the convergence rate for the \emph{last iterate} of neither \ref{eq:EG} nor \ref{eq:OG} were known even when $F$ is (maximally) monotone and Lipschitz. First results in this direction were obtained by~\citet{golowich2020last,golowich2020tight} under some additional assumptions (namely the Jacobian of $F$ being Lipschitz). Later, \citet{gorbunov2021extragradient,gorbunov2022last,cai2022tight} closed this question by proving the tight worst-case last iterate convergence rate of these methods under monotonicity and Lipschitzness of~$F$. 

As some important motivating applications involve deep neural networks, the operator $F$ under consideration is typically not monotone. However, for general non-monotone problems approximating first-order locally optimal solutions can be intractable \citep{daskalakis2021complexity, diakonikolas2021efficient}. Thus, it is natural to consider assumptions on structured non-monotonicity. Recently~\citet{diakonikolas2021efficient} proposed to analyse \ref{eq:EG} using a weaker assumption than the traditional monotonicity.  In the sequel, this assumption is referred to as \emph{$\rho$-negative comonotonicity} (with $\rho \geq 0$). That is, for all $x,y \in \R^d$, the operator $F$ satisfies:
\begin{equation}
    \langle F(x) - F(y), x- y \rangle \geq -\rho \|F(x) - F(y)\|^2. \label{eq:rho_neg_comon}
\end{equation}

A number of works have followed the idea of~\citet{diakonikolas2021efficient} and considered~\eqref{eq:rho_neg_comon} as their working assumption, see, e.g.,~\citep{yoon2021accelerated,lee2021fast,luo2022last,cai2022accelerated,gorbunov2022clipped}. Albeit being a reasonable first step toward the understanding of the behavior of algorithms for~\eqref{eq:MI} beyond $F$ being monotone, it remains unclear by what means the $\rho$-negative comonotonicity assumption is general enough to capture complex non-monotone operators. This question is crucial for developing a clean optimization theory that can fully encompass ML applications involving neural networks. 

To the best of our knowledge, \emph{$\rho$(-star)-negative comonotonicity} is the weakest known assumption under which extragradient-type methods can be analyzed for solving~\eqref{eq:MI}. The first part of this work is devoted to providing simple interpretations of this assumption. Then, we close the problem of studying the convergence rate of the \ref{eq:PP} method in this setting, the base ingredient underlying most algorithms for solving~\eqref{eq:MI} (which are traditionally interpreted as approximations to \ref{eq:PP}, see~\citep{nemirovski2004prox}). That is, we provide upper and lower convergence bounds as well as a tight conditions on its stepsize for~\ref{eq:PP} under negative comonotonicity. We eventually consider the last-iterate convergence of \ref{eq:EG} and \ref{eq:OG} and provide an almost complete picture in that case, listing the remaining open questions.

Before moving to the next sections, let us mention that many of our results were discovered using the performance estimation approach, first coined by~\cite{drori2012performance} and formalized by~\cite{taylor2017smooth,2017taylor}. The operator version of the framework is due to~\citep{ryu2020operator}. We used the framework through the packages PESTO~\citep{taylor2017performance} and PEPit~\citep{goujaud2022pepit}, thereby providing a simple way to validate our results numerically.

\subsection{Preliminaries}

In the context of~\eqref{eq:MI}, we refer to $F$ as being $\rho$-star-negative comonotone ($\rho \geq 0$) -- a relaxation\footnote{For the example of star-negative comonotone operator that is not negative comonotone we refer to \citep[Section 5.1]{daskalakis2020independent} and \citep[Section 2.2]{diakonikolas2021efficient}.} of \eqref{eq:rho_neg_comon} -- if for all $x \in \R^d$ and $x^*$ being a solution to~\eqref{eq:MI}, we have:
\begin{equation}
    \langle F(x), x- x^* \rangle \geq -\rho \|F(x)\|^2. \label{eq:rho_star_neg_comon}
\end{equation}
Furthermore, similar to monotone operators (see~\cite{bauschke2011convex} or~\cite{ryu2020large} for details), we assume that the mapping $F$ is \emph{maximal} in the sense that its graph is not strictly contained in the graph of any other $\rho$-negative comonotone operator (resp., $\rho$-star-negative comonotone), which ensures the corresponding proximal operator used in the sequel to be well-defined. Some examples of star-negative comonotone operators are given in \citep[Appendix C]{pethick2022escaping}. Moreover, if $F$ is star-monotone or quasi-strongly monotone \citep{loizou2021stochastic}, then $F$ is also star-negative comonotone. The examples of star-monotone/quasi-strongly monotone operators that are not monotone are given in \citep[Appendix A.6]{loizou2021stochastic}. Next, there are some studies of the eigenvalues of the Jacobian around the equilibrium of GAN games \citep{mescheder2018training, nagarajan2017gradient, berard2019closer}. These studies imply that the corresponding variational inequalities are locally quasi-strongly monotone. Finally, when $F$ is $L$-Lipschitz it satisfies $\langle F(x), x - x^* \rangle \geq - L\|x - x^*\|^2$. If in addition $\|F(x)\| \geq \eta \|x - x^*\|$ for some $\eta > 0$ (meaning that $F$ changes not ``too slowly''), then $\langle F(x), x - x^* \rangle \geq - \frac{L}{\eta^2}\|F(x)\|^2$, i.e., condition \eqref{eq:rho_star_neg_comon} holds with $\rho = \frac{L}{\eta^2}$.

For the analysis of the \ref{eq:EG} and \ref{eq:OG}, we further assume $F$ to be $L$-Lipschitz, meaning that for all $x,y \in \R^d$:
\begin{equation}
    \|F(x) - F(y)\| \leq L\|x - y\|. \label{eq:Lipschitzness}
\end{equation}
Note that in that case, $F$ is a single-valued mapping. In this case, \ref{eq:MI} transforms into a variational inequality:
\begin{equation}
    \text{find } x^* \in \R^d \text{ such that } F(x^*) = 0. \tag{VIP} \label{eq:VIP}
\end{equation}

\subsection{Related Work}
\begin{table*}[t]
    \centering
    \scriptsize
    \caption{\footnotesize Known and new $\cO\left(\nicefrac{1}{N}\right)$ convergence results for \ref{eq:PP}, \ref{eq:EG} and \ref{eq:OG}. Notation: NC = negative comonotonicity, SNC = star-negative comonotonicity, $L$-Lip. = $L$-Lipschitzness. Whenever the derived results are completely novel or extend the existing ones, we highlight them in green.}
    \label{tab:res_comparison}
    \vspace{-0.2cm}
    \begin{threeparttable}
        \begin{tabular}{|c|c l c c||c|}
        \hline
        Method & Setup & $\rho \in $ & Convergence & Reference & Counter-/Worst-case examples? \\
        \hline\hline
        \multirow{2}{*}{\begin{tabular}{c}
            \ref{eq:PP}\tnote{\color{blue}(1)}
        \end{tabular}} & NC &$[0, +\infty)$ & Last-iterate & Theorem~\ref{thm:PP_convergence} & \cellcolor{bgcolor2}Theorem~\ref{thm:PP_worst_case} (worst-case example) \& \ref{thm:PP_counter_example} (diverge for $\gamma \leq 2\rho$)\\
        & SNC &$[0, +\infty)$ & Best-iterate & Theorem~\ref{thm:PP_convergence} & \cellcolor{bgcolor2}Theorem~\ref{thm:PP_worst_case} (worst-case example) \& \ref{thm:PP_counter_example} (diverge for $\gamma \leq 2\rho$)\\
        \hline\hline
        \multirow{5.2}{*}{\begin{tabular}{c}
            \ref{eq:EG}
        \end{tabular}} & NC + $L$-Lip. & $[0, \nicefrac{1}{16L})$ & Last-iterate & \citep{luo2022last} & \xmark\\
        & NC + $L$-Lip. &\cellcolor{bgcolor2}$[0, \nicefrac{1}{8L})$ & Last-iterate &  \cellcolor{bgcolor2} Theorem \ref{thm:EG_convergence}& \cellcolor{bgcolor2} Theorem \ref{thm:EG_counter_example} (diverge for $\rho \geq \nicefrac{1}{2L}$ and any $\gamma_1,\gamma_2 > 0$)\\
        & SNC + $L$-Lip. & $[0, \nicefrac{1}{8L})$ & Best-iterate & \citep{diakonikolas2021efficient} & \xmark\\
        & SNC + $L$-Lip. & $[0, \nicefrac{1}{2L})$ & Best-iterate & \citep{pethick2022escaping} & Theorem 3.4 (diverge for $\gamma_1 = \nicefrac{1}{L}$ and $\rho \geq \nicefrac{(1-L\gamma_2)}{2L}$)\\
        & SNC + $L$-Lip. & $[0, \nicefrac{1}{2L})$ & Best-iterate & Theorem \ref{thm:EG_convergence} \tnote{\color{blue}(2)} & \cellcolor{bgcolor2} Theorem \ref{thm:EG_counter_example} (diverge for $\rho \geq \nicefrac{1}{2L}$ and any $\gamma_1,\gamma_2 > 0$)\\
        \hline\hline
        \multirow{4.3}{*}{\begin{tabular}{c}
            \ref{eq:OG}
        \end{tabular}} & NC + $L$-Lip. & $[0, \nicefrac{8}{(27\sqrt{6}L)})$ & Last-iterate & \citep{luo2022last} & \xmark\\
        & NC + $L$-Lip. &\cellcolor{bgcolor2}$[0, \nicefrac{5}{62L})$ & Last-iterate &\cellcolor{bgcolor2} Theorem \ref{thm:OG_convergence}& \cellcolor{bgcolor2} Theorem \ref{thm:OG_counter_example} (diverge for $\rho \geq \nicefrac{1}{2L}$ and any $\gamma_1,\gamma_2 > 0$)\\
        & SNC + $L$-Lip. & $[0, \nicefrac{1}{2L})$ & Best-iterate & \citep{bohm2022solving} & \xmark\\
        & SNC + $L$-Lip. & $[0, \nicefrac{1}{2L})$ & Best-iterate & Theorem \ref{thm:OG_convergence} \tnote{\color{blue}(2)}& \cellcolor{bgcolor2} Theorem \ref{thm:OG_counter_example} (diverge for $\rho \geq \nicefrac{1}{2L}$ and any $\gamma_1,\gamma_2 > 0$)\\
        \hline
    \end{tabular}
    \begin{tablenotes}
        {\footnotesize\item [{\color{blue}(1)}] The best-iterate convergence result can be obtained \citep[Lemma~2]{iusem2003inexact}, and the last-iterate convergence result can also be derived from the non-expansiveness of \ref{eq:PP} update \citep[Proposition 3.13 (iii)]{bauschke2021generalized}. At the moment of writing our paper, we were not aware of these results.
        \item [{\color{blue}(2)}] Although these results are not new for the best-iterate convergence of \ref{eq:EG} and \ref{eq:OG}, the proof techniques differ from prior works.
        }
    \end{tablenotes}
    \end{threeparttable}
\end{table*}
\paragraph{Last-iterate convergence rates in the monotone case.} Several recent theoretical advances focus on the last-iterate convergence of the methods for solving \ref{eq:MI}/\ref{eq:VIP} with \emph{monotone} operator $F$. In particular, \citet{he2015convergence} derive the last-iterate $\cO(\nicefrac{1}{N})$ rate\footnote{Here and below we mean the rates of convergence in terms of the squared residual $\|x^N - x^{N-1}\|^2$ in the case of set-valued operators and $\|F(x^N)\|^2$ in the case of single-valued ones.} for \ref{eq:PP} and \citet{gu2020tight} show its tightness. Under the additional assumption of Lipschitzness of $F$ and of its Jacobian, \citet{golowich2020last,golowich2020tight} obtain last-iterate $\cO(\nicefrac{1}{N})$ convergence for \ref{eq:EG} and \ref{eq:OG} and prove matching lower bounds for them. Next, \citet{gorbunov2021extragradient,gorbunov2022last,cai2022tight} prove similar upper bounds for \ref{eq:EG}/\ref{eq:OG} without relying on the Lipschitzness (and even existence) of the Jacobian of $F$. Finally, for this class of problems one can design (accelerated) methods with provable $\cO(\nicefrac{1}{N^2})$ last-iterate convergence rate \citep{yoon2021accelerated, bot2022fast, tran2021halpern, tran2022connection}. Although $\cO(\nicefrac{1}{N^2})$ is much better than $\cO(\nicefrac{1}{N})$, \ref{eq:EG}/\ref{eq:OG} are still more popular due to their higher flexibility. Moreover, when applied to non-monotone problems the mentioned accelerated methods may be attracted to ``bad'' stationary points, see, e.g.,~\citep[Example 1.1]{gorbunov2022last}.

\paragraph{Best-iterate convergence under $\rho$-star-negative comonotonicity.} The convergence of \ref{eq:EG} is also studied under $\rho$-star-negative comonotonicity (and $L$-Lipschitzness): \citet{diakonikolas2021efficient} prove best-iterate $\cO(\nicefrac{1}{N})$ convergence of \ref{eq:EG} with $\gamma_2 < \gamma_1$ for any $\rho < \nicefrac{1}{8L}$ and \citet{pethick2022escaping} derive a similar result for any $\rho < \nicefrac{1}{2L}$. Moreover, \citet{pethick2022escaping} show that \ref{eq:EG} is not necessary convergent when $\gamma_1 = \nicefrac{1}{L}$ and $\rho \geq \nicefrac{(1-L\gamma_2)}{2L}$. \citet{bohm2022solving} prove best-iterate $\cO(\nicefrac{1}{N})$ convergence of \ref{eq:OG} for $\rho < \nicefrac{1}{2L}$, i.e., for the same range of $\rho$ as in the best-known result for \ref{eq:EG}.

\paragraph{Last-iterate convergence under $\rho$-negative comonotonicity.} In a very recent work, \citet{luo2022last} prove the first last-iterate $\cO(\nicefrac{1}{N})$ convergence results for \ref{eq:EG} and \ref{eq:OG} applied to solve \ref{eq:VIP} with $\rho$-negative comonotone $L$-Lipschitz operator. Both results rely on the usage of $\gamma_1 = \gamma_2$. Next, for \ref{eq:EG} the result from \citep{luo2022last} requires $\rho < \nicefrac{1}{16L}$ and for \ref{eq:OG} the corresponding result is proven for $\rho < \nicefrac{4}{(27\sqrt{6}L)}$. In contrast, for the accelerated (anchored) version of \ref{eq:EG} \citet{lee2021fast} prove $\cO(\nicefrac{1}{N^2})$ last-iterate convergence rate for any $\rho < \nicefrac{1}{2L}$, which is a larger range of $\rho$ than in the known results for \ref{eq:EG}/\ref{eq:OG} from \citep{luo2022last}.

\subsection{Contributions}\label{s:contrib}

\textbf{$\diamond$ Spectral viewpoint on negative comonotonicity.} Our work provides a spectral interpretation of negative comonotonicity, shedding some light on the relation between this assumption and classical monotonicity, Lipschitzness, and cocoercivity. 

\textbf{$\diamond$ Closer look at the convergence of Proximal Point method.} We derive $\cO(\nicefrac{1}{N})$ last-iterate and best-iterate convergence rates for \ref{eq:PP} under negative comonotonicity and star-negative comonotonicity assumptions, respectively. These results follow from existing ones \citep{iusem2003inexact, bauschke2021generalized}. However, we go further and show the tightness of the derived results via constructing matching worst-case examples and also propose counter-examples for the case when the stepsize is smaller than $2\rho$.


\looseness=-1
\textbf{$\diamond$ New results for Extragradient-Based Methods.} We derive $\cO(\nicefrac{1}{N})$ last-iterate convergence of \ref{eq:EG} and \ref{eq:OG} under milder assumptions on the negative comonotonicity parameter $\rho$ than in the prior work by \citet{luo2022last}, see the details in Table~\ref{tab:res_comparison}. We also provide alternative analyses of the best-iterate convergence of \ref{eq:EG} and \ref{eq:OG} under star-negative comonotonicity and recover the best-known results in this case \citep{pethick2022escaping, bohm2022solving}. Finally, we show that the range of allowed $\rho$ cannot be improved for \ref{eq:EG} and \ref{eq:OG} via constructing counter-examples for these methods.

\looseness=-1
\textbf{$\diamond$ Constructive proofs.} We derive the proofs for the last-iterate convergence of \ref{eq:PP}, \ref{eq:EG}, and \ref{eq:OG} as well as worst-case examples for \ref{eq:PP} using using the performance estimation technique \citep{drori2012performance,taylor2017smooth,2017taylor}. In particular, it required us to extend some theoretical and program tools to handle negative comonotone and star-negative comonotone problems; see the details in App.~\ref{appendix:PEP} and Github-repository \url{https://github.com/eduardgorbunov/Proximal_Point_and_Extragradient_based_methods_negative_comonotonicity}, containing the codes for generating worst-case examples for \ref{eq:PP}, numerical verification of the derived results and symbolical verification of certain technical derivations. We believe that these tools are important on its own and can be applied in future works studying the convergence of different methods under negative comonotonicity.

\section{A Closer Look at Negative Comonotonicity}\label{sec:neg_comon}

Negative comonotonicity (also known as cohypomonotonicity) was originally introduced as a relaxation of monotonicity that is sufficient for the convergence of \ref{eq:PP} \citep{pennanen2002local}. This assumption is relatively weak: one can show that $F$ is $\rho$-negative comonotone in a neighborhood of solution $x^*$ for large enough $\rho$, if the (possibly set-valued) operator $F^{-1}:\R^d \rightrightarrows \R^d$ has a Lipschitz localization around $(0,x^*) \in G_{F^{-1}}$, where $G_{F^{-1}}$ denotes the graph of $F^{-1}$ \citep[Proposition 7]{pennanen2002local}. The next lemma characterizes negative comonotone operators; it is technically very close to~\citep[Proposition 4.2]{bauschke2011convex} (on cocoercive operators).
\begin{lemma}\label{lem:expansiveness_of_neg_comon_operator}
    $F:\R^d \rightrightarrows \R^d$ is maximally $\rho$-negative comonotone ($\rho\geq 0$) if and only if operator $\Id + 2\rho F$ is expansive.
\end{lemma}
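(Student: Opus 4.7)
Write $T := \Id + 2\rho F$ (a set-valued operator with graph $\{(x,x+2\rho f):x\in\R^d,\ f\in F(x)\}$). The heart of the proof is one algebraic identity: pick arbitrary $x,y\in\R^d$ together with selections $f_x\in F(x)$ and $f_y\in F(y)$, and set $u := x+2\rho f_x\in T(x)$, $v := y+2\rho f_y\in T(y)$. Expanding the squared norm yields
\[
\|u-v\|^2 = \|x-y\|^2 + 4\rho\langle f_x-f_y,\,x-y\rangle + 4\rho^2\|f_x-f_y\|^2.
\]
For $\rho>0$, the expansiveness inequality $\|u-v\|^2\geq \|x-y\|^2$ is then equivalent (dividing by $4\rho$) to $\langle f_x-f_y,\,x-y\rangle\geq -\rho\|f_x-f_y\|^2$, which is exactly the definition of $\rho$-negative comonotonicity. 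Since the selections $f_x, f_y$ and the images $u, v$ are in one-to-one correspondence, this gives the pointwise equivalence ``$F$ is $\rho$-negative comonotone $\Leftrightarrow$ $T$ is expansive'' as set-valued operators, with no maximality involved.

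The second step handles maximality. The affine map $\Phi:(x,f)\mapsto (x,x+2\rho f)$ is a bijection from the graph of $F$ onto the graph of $T$, and it preserves inclusions between graphs. Combined with the first step, $\Phi$ therefore restricts to a bijection between the class of $\rho$-negative comonotone operators and the class of expansive operators, ordered by graph inclusion. In particular, the graph of $F$ is maximal among graphs of $\rho$-negative comonotone operators if and only if the graph of $T$ is maximal among graphs of expansive operators, which is what is meant by ``$T$ expansive'' in the target statement (parallel to the convention used for maximal monotone operators as in \cite{bauschke2011convex}).

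The main obstacle is essentially one of bookkeeping in the set-valued setting: being careful that the expansive inequality is required to hold for every selection $u\in T(x)$, $v\in T(y)$, matching the quantifier in the definition of $\rho$-negative comonotonicity; and checking that the notion of maximality on both sides of the equivalence is indeed graph-maximality, so that the transport via $\Phi$ is sound. Aside from this, the computation is a one-line expansion of a squared norm, mirroring the argument used in Proposition~4.2 of \cite{bauschke2011convex} for cocoercive operators (where $\Id-2\beta F$ is shown to be nonexpansive rather than $\Id+2\rho F$ to be expansive).
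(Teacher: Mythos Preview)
Your argument is correct and matches the paper's approach exactly for the core step: both expand $\|(x+2\rho f_x)-(y+2\rho f_y)\|^2$ and observe that the resulting inequality $\|u-v\|^2\geq\|x-y\|^2$ is equivalent to the $\rho$-negative comonotonicity inequality. You additionally handle maximality via the graph bijection $\Phi$, whereas the paper's proof omits this entirely---indeed, the appendix restates the lemma without the word ``maximally'' and proves only the non-maximal pointwise equivalence.
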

The proof of this lemma follows directly from the definition of negative comonotonicity. Among others, it implies the following result about the spectral properties of the Jacobian of negative comonotone operator (when it exists).
\begin{theorem}\label{thm:spectral_viewpoint_on_neg_comon}
    Let $F:\R^d \to \R^d$ be a continuously differentiable. Then, the following statements are equivalent:
    \begin{itemize}
        \item $F$ is $\rho$-negative comonotone,
        \item $\Re(\nicefrac{1}{\lambda}) \geq -\rho$ for all $\lambda \in \Sp(\nabla F(x))$, $\forall x \in \R^d$.
    \end{itemize}
\end{theorem}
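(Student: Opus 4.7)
My plan is to turn $\rho$-negative comonotonicity into a pointwise Jacobian condition via Lemma~\ref{lem:expansiveness_of_neg_comon_operator}, then match that pointwise condition against the spectrum of $\nabla F$ by complexification. By Lemma~\ref{lem:expansiveness_of_neg_comon_operator}, $F$ is $\rho$-negative comonotone iff $\Id+2\rho F$ is expansive, i.e.\ $\|(x+2\rho F(x))-(y+2\rho F(y))\|\geq\|x-y\|$ for all $x,y\in\R^d$. Substituting $y=x+tu$ and letting $t\to 0^+$ yields the infinitesimal version $\|(\Id+2\rho\nabla F(x))u\|\geq\|u\|$ for every $x,u\in\R^d$; integrating this inequality along the segment $[x,y]$ via the fundamental theorem of calculus recovers the global expansion. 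So the statement reduces, at each $x$ with $A=\nabla F(x)$, to the equivalence between the Jacobian-level expansion $\|(\Id+2\rho A)u\|\geq\|u\|$ on $\R^d$ and the spectral condition $\Re(1/\lambda)\geq -\rho$ on $\Sp(A)$.

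For the forward direction, I would observe that $\|(\Id+2\rho A)u\|^2\geq\|u\|^2$ on $\R^d$ rewrites as PSD-ness of the real symmetric matrix $(A+A^\top)/2+\rho A^\top A$, which is equivalent to PSD-ness of its Hermitian extension on $\CC^d$. Plugging in a complex eigenvector $v$ with $Av=\lambda v$ then gives $\Re(\lambda)\|v\|^2+\rho|\lambda|^2\|v\|^2\geq 0$, i.e.\ $\Re(1/\lambda)\geq -\rho$ for $\lambda\neq 0$ (and the condition is vacuous when $\lambda=0$).

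The reverse direction is where the main work lies, and is the step I expect to be the genuine obstacle. The eigenvalue inequalities $\Re(1/\lambda)\geq -\rho$ rewrite as $|1+2\rho\lambda|\geq 1$, placing every eigenvalue of $\Id+2\rho A$ outside the open unit disk; one then has to promote this eigenvalue bound to the singular-value bound $\sigma_{\min}(\Id+2\rho A)\geq 1$. When $A$ is normal --- in particular for gradient fields where $\nabla F$ is symmetric --- this is immediate via unitary diagonalization. In the general non-normal case, my plan would be to use a Schur triangulation of $A$ so that $\Id+2\rho A$ becomes upper-triangular with entries $1+2\rho\lambda_i$ on the diagonal, and to reverse the forward computation for the Hermitian form $(\Id+2\rho A)^*(\Id+2\rho A)-\Id$. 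Passing from an eigenvalue inequality to a singular-value inequality is notoriously where arguments tend to break for non-normal matrices, so the proof will have to exploit genuine structure of $A$; once the pointwise equivalence is secured, the integration step above closes the loop back to the global expansion inequality and, via Lemma~\ref{lem:expansiveness_of_neg_comon_operator}, to $\rho$-negative comonotonicity.
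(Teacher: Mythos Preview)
Your forward direction is sound and matches the paper's approach: reduce to expansiveness of $\Id+2\rho F$ via Lemma~\ref{lem:expansiveness_of_neg_comon_operator}, linearize, and read off the eigenvalue inequality $|1+2\rho\lambda|\geq 1$, equivalently $\Re(1/\lambda)\geq-\rho$.

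Your hesitation on the reverse direction is well placed, and in fact the obstacle you flag is fatal: the implication from the eigenvalue bound to the singular-value bound is \emph{false} for non-normal Jacobians, so no Schur-based (or any other) argument can close it. A linear counterexample: with $\rho=1$ and
\[
F(x)=Ax,\qquad A=\begin{pmatrix}1&-10\\0&1\end{pmatrix},
\]
both eigenvalues of $\nabla F\equiv A$ equal $1$, so $\Re(1/\lambda)=1\geq-1=-\rho$ and the spectral condition holds everywhere. Yet $(A+A^\top)/2+\rho A^\top A=\begin{pmatrix}2&-15\\-15&102\end{pmatrix}$ has determinant $-21<0$; equivalently $\Id+2\rho A=\begin{pmatrix}3&-20\\0&3\end{pmatrix}$ has smallest singular value strictly below $1$. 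Hence $F$ is \emph{not} $1$-negative comonotone, and the claimed equivalence fails. The paper's own proof contains the same unargued step---it asserts that expansiveness of $\Id+2\rho F$ ``is equivalent to $|\lambda|\geq 1$ for any $\lambda\in\Sp(I+2\rho\nabla F(x))$''---which is only the necessary direction in general. The equivalence \emph{does} hold whenever $\nabla F(x)$ is normal for every $x$ (in particular for gradient fields $F=\nabla f$), which is presumably the intended scope; as stated, the theorem is incorrect.

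A secondary remark: your ``integrate along the segment'' step, from the pointwise inequality $\|(\Id+2\rho\nabla F(x))u\|\geq\|u\|$ to global expansiveness of $\Id+2\rho F$, is not the one-liner you suggest---a lower bound on $\|q'(t)\|$ for $q(t)=G(y+t(x-y))$ controls the arc length of the image curve, not the chord $\|G(x)-G(y)\|$. But this is moot given the failure at the matrix level.
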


We notice that the above theorem holds for any continuously differentiable operator $F$. In the case of the linear operator $F$, this result is known \citep[Proposition 5.1]{bauschke2021generalized}. The condition $\Re(\nicefrac{1}{\lambda}) \geq -\rho$ means that $\lambda$ lies \emph{outside} the disc in $\CC$ centered at $-\nicefrac{1}{2\rho}$ and having radius $\nicefrac{1}{2\rho}$, see Figure~\ref{fig:spectral_neg_comon}. In particular, for the case of twice differentiable functions $\rho$-negative comonotonicity forbids the Hessian to have eigenvalues in $(-\nicefrac{1}{\rho}, 0)$, i.e., eigenvalues of the Hessian have to be either negative with sufficiently large absolute value or non-negative. An alternate interpretation of Figure~\ref{fig:spectral_neg_comon} can be formally made in terms of scaled relative graphs, see~\citep{ryu2022scaled}; see also older references using such illustrations~\citep{eckstein1989splitting,eckstein1992douglas}, or~\citep[arXiv version $1$ to $3$]{giselsson2016linear}.

\begin{figure}[t]
	\centering
	\newcounter{t}
	\newcounter{t2}
	\newcounter{L}
	\newcounter{mu}
	\setcounter{t}{90}
	\setcounter{L}{18}
	\setcounter{mu}{5}
	\setcounter{t2}{60}
	\begin{tikzpicture}[scale=1]
	\begin{axis}[
	grid=major,
	axis equal image,
    ytick = {-\value{L}/2,0,\value{L}/2},
    yticklabels={-$\tfrac{i}{2\rho}$,$0i$,$\tfrac{i}{2\rho}$},
    xtick = {-\value{L},-\value{L}/2,0},
	xticklabels={-$\tfrac{1}{\rho}$,-$\tfrac{1}{2\rho}$,$0$},
	xmin={-\value{L} - 3},   xmax=6,
	ymin={-\value{L}/2 - 2},   ymax={\value{L}/2 + 2},
	]
	\draw [domain=-180:180, draw=white, fill=red!50!white, thick, fill opacity=0.5, samples=65] plot (axis cs: -{\value{L}*(1 + cos(\value{t}))/2 + cos(\x) * \value{L}*(1 - cos(\value{t}))/2}, {sin(\x) * \value{L}*(1 - cos(\value{t}))/2});
	\node[anchor=north west] (mu) at (axis cs: {\value{L} * cos(\value{t})}, 0) {$0$};
	\draw [fill=black] (axis cs: {\value{L} * cos(\value{t})}, 0) circle (1pt);
	\draw [fill=black] (axis cs: {-\value{L}}, 0) circle (1pt);
	\node [circle, anchor=north east] (L) at (axis cs: -{\value{L}}, 0) {};
	\draw [fill=black] (axis cs: {-\value{L}/2}, 0) circle (1pt);
	\node [circle, anchor=north] (L) at (axis cs: -{\value{L}/2}, 0) {};
	\node [circle, anchor=south] at (axis cs: {-\value{L}/2}, -9 ) {\small No $\rho$-negative comonotonicity};
	\end{axis}
	\end{tikzpicture}
	\caption{Visualization of Theorem~\ref{thm:spectral_viewpoint_on_neg_comon}. Red open disc corresponds to the constraint $\Re(\nicefrac{1}{\lambda}) < -\rho$ that defines the set such that all eigenvalues the Jacobian of $\rho$-negative comonotone operator should lie outside this set.
	}\label{fig:spectral_neg_comon}
\end{figure}
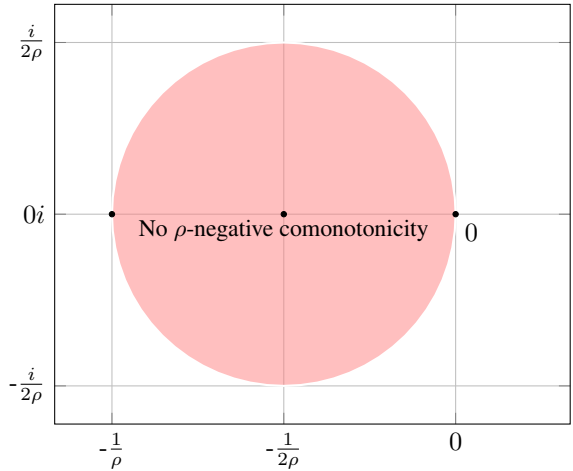

Finally, we touch the following informal question: \emph{to what extent negative comonotone operators are non-monotone?} To formalize a bit we consider a way more simpler question: \emph{can negative comonotone operator have isolated zeros/solutions of \ref{eq:VIP}?} Unfortunately, the answer is no.

\begin{theorem}[Corollary 3.15 from \citep{bauschke2021generalized}\footnote{We were not aware of the results from \citep{bauschke2021generalized} during the work on our paper.}]\label{thm:no_isolated_minima}
    If $F:\R^d \rightrightarrows \R^d$ is maximally $\rho$-negative comonotone, then the solution set $X^* = F^{-1}(0)$ is convex.
\end{theorem}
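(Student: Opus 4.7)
The plan is to exploit maximality directly, mimicking the classical convexity argument for the zero set of a maximally monotone operator. Given any two solutions $x_0, x_1 \in X^* = F^{-1}(0)$ and any $\lambda \in [0,1]$, write $x_\lambda = (1-\lambda)x_0 + \lambda x_1$. I will show that the extended graph $\mathrm{graph}(F) \cup \{(x_\lambda, 0)\}$ is again $\rho$-negative comonotone; by maximality, the pair $(x_\lambda,0)$ must then already belong to $\mathrm{graph}(F)$, i.e., $x_\lambda \in X^*$.

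The key computation is straightforward. Fix any $(y,v) \in \mathrm{graph}(F)$. Applying $\rho$-negative comonotonicity to the pairs $(x_0,0),(y,v)$ and $(x_1,0),(y,v)$ yields
\begin{equation*}
\langle v, y - x_0 \rangle \geq -\rho \|v\|^2, \qquad \langle v, y - x_1 \rangle \geq -\rho \|v\|^2.
\end{equation*}
Forming the convex combination with weights $(1-\lambda)$ and $\lambda$ and using bilinearity of the inner product, I get
\begin{equation*}
\langle v, y - x_\lambda \rangle \geq -\rho \|v\|^2,
\end{equation*}
which is precisely the $\rho$-negative comonotonicity inequality between $(x_\lambda,0)$ and $(y,v) \in \mathrm{graph}(F)$. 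Checking $(x_\lambda,0)$ against itself gives $0 \geq 0$ trivially, so the augmented graph is indeed $\rho$-negative comonotone.

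Invoking the maximality assumption stated in the preliminaries, $\mathrm{graph}(F)$ is not strictly contained in the graph of any other $\rho$-negative comonotone operator, so the added pair must already belong to $\mathrm{graph}(F)$, giving $0 \in F(x_\lambda)$ and hence $x_\lambda \in X^*$. Since $\lambda \in [0,1]$ was arbitrary, $X^*$ is convex. I do not anticipate any real difficulty in carrying this out: the term $-\rho\|v\|^2$ is \emph{independent} of $x_0$ and $x_1$, so it passes unchanged through the convex combination, and the only thing worth double-checking is that the paper's notion of maximality is the natural graph-inclusion one, which is exactly what was assumed. As an alternative route, one could equivalently identify $X^*$ with the fixed-point set of the (single-valued, nonexpansive) resolvent $(\mathrm{Id}+2\rho F)^{-1}$ via Lemma~\ref{lem:expansiveness_of_neg_comon_operator} and then appeal to the classical fact that fixed-point sets of nonexpansive self-maps of $\R^d$ are convex; however, the direct maximality argument above avoids any discussion of domain/single-valuedness of the resolvent and is therefore the cleaner option.
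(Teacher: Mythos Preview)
Your proof is correct and takes a genuinely different route from the paper's. The paper reduces to the classical monotone case: it observes that $F^{-1}(0) = (F^{-1} + \rho\,\Id)(0)$, notes that maximal $\rho$-negative comonotonicity of $F$ is equivalent to maximal monotonicity of $F^{-1} + \rho\,\Id$ (and hence of its inverse, the Yosida-type regularization $(F^{-1}+\rho\,\Id)^{-1}$), and then invokes the standard fact that the zero set of a maximally monotone operator is convex \citep[Proposition~23.39]{bauschke2011convex}. Your argument, by contrast, works directly with the $\rho$-negative comonotonicity inequality and the graph-inclusion notion of maximality, exploiting the observation that the right-hand side $-\rho\|v\|^2$ is independent of which zero $x_i$ appears on the left and therefore survives convex combination unchanged. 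Your approach is fully self-contained and more elementary; the paper's approach highlights the structural correspondence $F \mapsto F^{-1} + \rho\,\Id$ between (maximal) $\rho$-negative comonotone and (maximal) monotone operators, and piggybacks on the monotone-operator literature. Your sketched alternative via the fixed-point set of a nonexpansive resolvent is closer in spirit to the paper's reduction strategy, though the paper routes through zeros of a monotone operator rather than fixed points of a nonexpansive map.
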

\begin{proof}
    The proof follows from the observations provided by \citet{pennanen2002local}. First, notice that $F$ and its Yosida regularization $(F^{-1} + \rho\cdot \Id)^{-1}$ have the same set of the solutions: $((F^{-1} + \rho\cdot \Id)^{-1})^{-1}(0) = (F^{-1} + \rho\cdot \Id)(0) = F^{-1}(0)$. Next, by definition \eqref{eq:rho_neg_comon} we have that maximal $\rho$-negative comonotonicity of $F$ implies maximal monotonicity of $F^{-1} + \rho\cdot \Id$ that is equivalent to maximal monotonicity of $(F^{-1} + \rho\cdot \Id)^{-1}$. Since the set of zeros of maximal monotone operator is convex \citep[Proposition 23.39]{bauschke2011convex}, we have the result.
\end{proof}

Therefore, despite its apparent generality, negative comonotonicity is not satisfied (globally) for the many practical tasks that have isolated optima. Nevertheless, studying the convergence of traditional methods under negative comonotonicity can be seen as a natural step towards understanding their behaviors in more complicated non-monotonic cases.

\section{Proximal Point Method}\label{sec:prox_point}

\begin{figure}[ht!]
\centering
\begin{subfigure}[b]{0.4\textwidth}
    \centering
    \includegraphics[width=\textwidth]{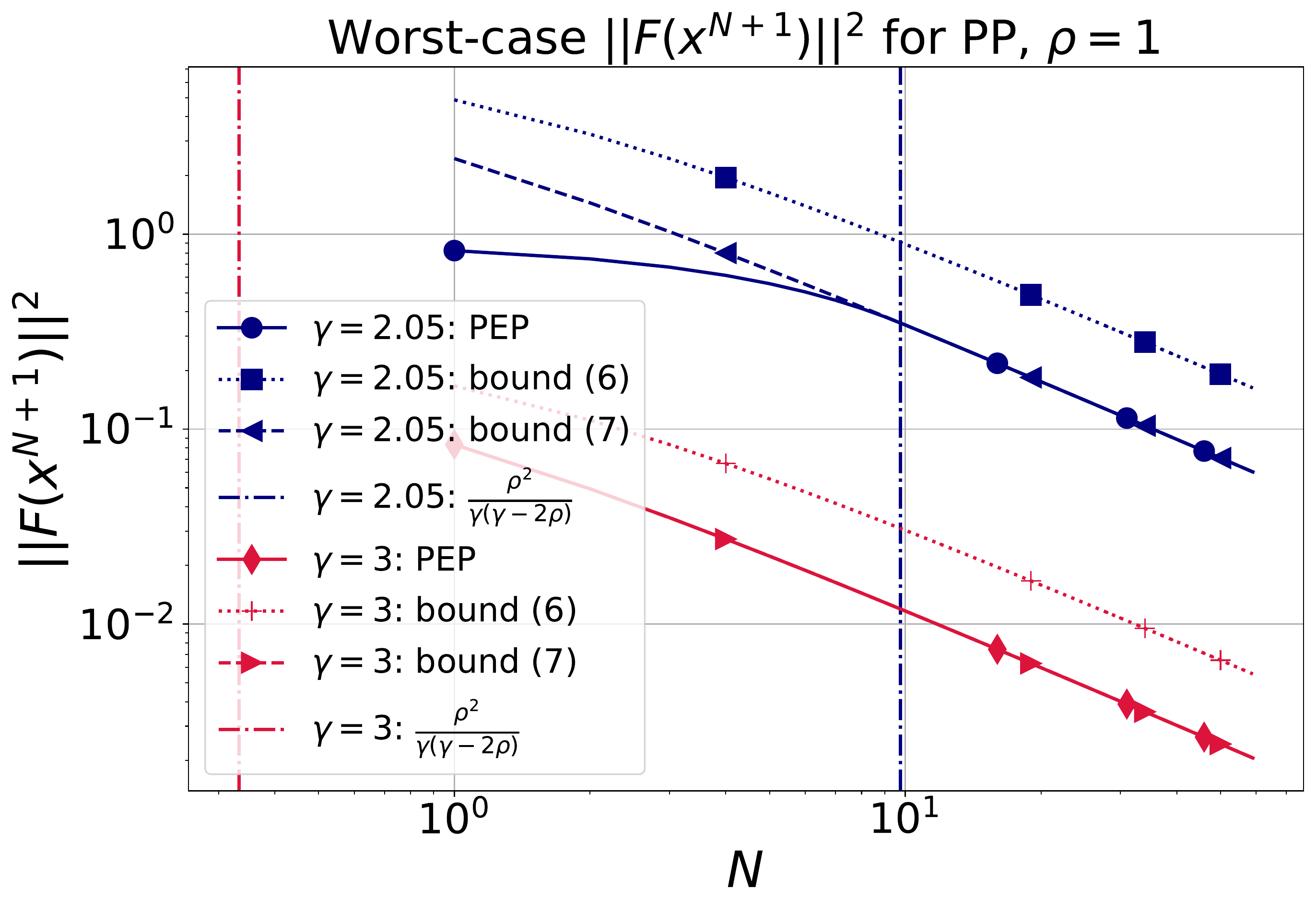}
    \caption{Worst-case $\|F(x^{N+1})\|^2$}\label{fig:1a}
\end{subfigure}\vspace{.1cm}
\begin{subfigure}[b]{0.4\textwidth}
    \centering
    \includegraphics[width=\textwidth]{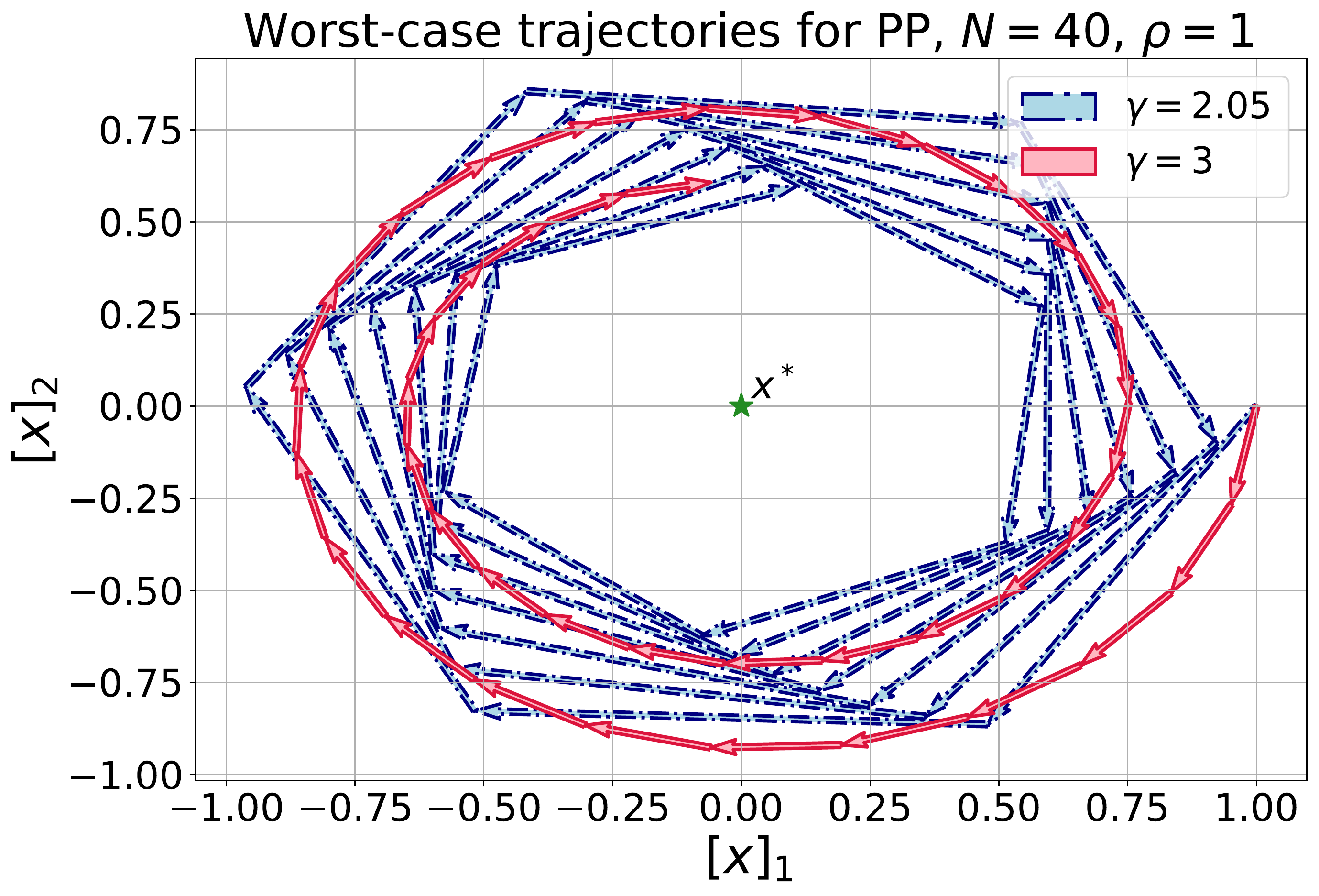}
    \caption{Worst-case trajectories}\label{fig:1b}
\end{subfigure}\vspace{.1cm}
\begin{subfigure}[b]{0.4\textwidth}
    \centering
    \includegraphics[width=\textwidth]{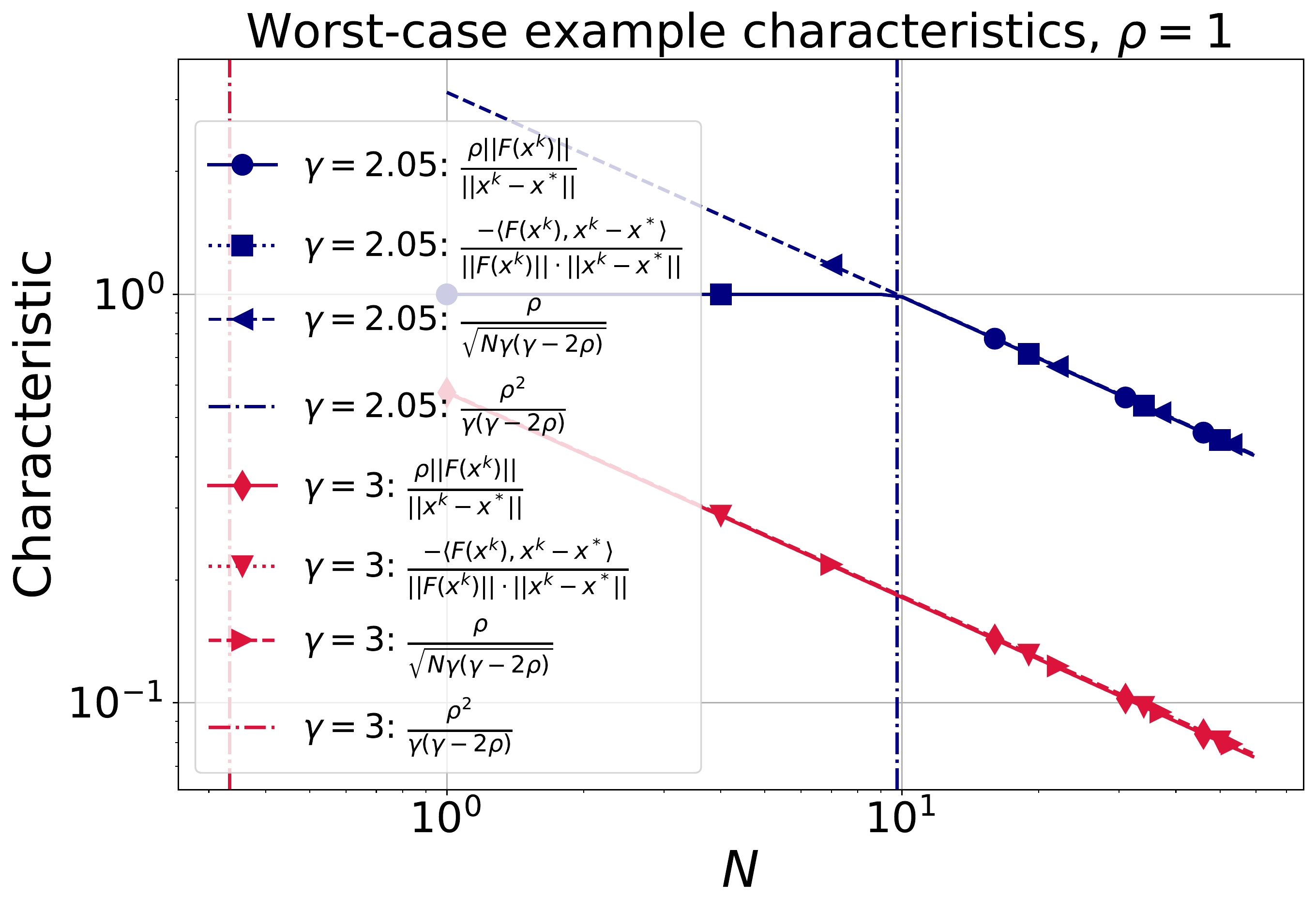}
    \caption{Study of the norms and angles}\label{fig:1c}
\end{subfigure}
\caption{\small In~(a), we report the solution of \eqref{eq:PEP_for_PP} for different values of $\gamma$ and $N$. The plot illustrates that for the considered range of $N$ and values of $\gamma$ \ref{eq:PP} converges as $\cO(\nicefrac{1}{N})$ in terms of $\|F(x^{N})\|^2$. In~(b), we show the worst-case trajectories of \ref{eq:PP} for $N = 40$. The form of trajectories hints that the worst-case operator is a rotation operator. For each  particular choice of $N$ and $\gamma > 2\rho$ we observed numerically that quantities $\nicefrac{\rho\|F(x^k)\|^2}{\|x^k - x^*\|}$ and $\nicefrac{-\langle F(x^k) ,  x^k - x^* \rangle}{(\|F(x^k)\|\cdot \|x^k - x^*\|)}$ remain the same during the run of the method (the standard deviation of arrays $\{\nicefrac{\rho\|F(x^k)\|^2}{\|x^k - x^*\|}\}_{k=1}^N$ and $\{\nicefrac{-\langle F(x^k) ,  x^k - x^* \rangle}{(\|F(x^k)\|\cdot \|x^k - x^*\|)}\}_{k=1}^N$ is of the order $10^{-6}-10^{-7}$). Finally, in~(c), we illustrate that these characteristics coincide with $\nicefrac{\rho}{\sqrt{N\gamma(\gamma - 2\rho)}}$ as long as the total number of steps $N$ is sufficiently large ($N \geq \max\{\nicefrac{\rho^2}{\gamma(\gamma-2\rho)},1\}$).}
\label{fig:1}
\end{figure}

In this section, we consider Proximal Point method \citep{martinet1970regularisation, rockafellar1976monotone}, which is usually written as $x^{k+1}=(F+\gamma\, \Id)^{-1}\,x^k$ (where we assume here that $\gamma>0$ is large enough so that the iteration is well and uniquely defined) or equivalently:
\begin{equation}
    x^{k+1} = x^k - \gamma F(x^{k+1}). \tag{\algname{PP}} \label{eq:PP}
\end{equation}
In particular, for given values of $N\in\mathbb{N}$, $R > 0$, $\rho > 0$, and $\gamma > 0$ we focus on the following question: \emph{what guarantees can we prove on $\|x^N - x^{N-1}\|^2$ (in particular: as a function of $N$), where $\{x^k\}_{k=0}^{N}$ is generated by \ref{eq:PP} with stepsize $\gamma$ after $N\geq 1$ iterations of solving \ref{eq:MI} with $F: \R^d \rightrightarrows \R^d$ being $\rho$-negative comonotone and $\|x^0 - x^*\|^2 \leq R^2$?} This kind of question can naturally be reformulated as an explicit optimization problem looking for worst-case problem instances, often referred to as \emph{performance estimation problems} (PEPs), as introduced and formalized in~\citep{drori2012performance,taylor2017smooth,2017taylor}:
\begin{eqnarray}
    \max\limits_{F, x^0}&&\|x^N - x^{N-1}\|^2 \label{eq:PEP_for_PP}\\
    \text{s.t.}&& F \text{ satisfies } \eqref{eq:rho_neg_comon},\notag\\
    &&\|x^0 - x^*\|^2 \leq R^2,\; 0 \in  F(x^*),\notag\\
    && x^{k+1} = x^k - \gamma F(x^{k+1}),\quad  k=0,1,\ldots,N-1.\notag
\end{eqnarray}
As we show in Appendix~\ref{appendix:PEP},~\eqref{eq:PEP_for_PP} can be \emph{formulated} as a semidefinite program (SDP). For constructing and solving this SDP problem corresponding to \eqref{eq:PEP_for_PP} numerically, one can use the PEPit package \citep{goujaud2022pepit} (after adding the class of $\rho$-negative comonotone operators to it), which thereby allows constructing worst-case guarantees and examples, numerically. Figure~\ref{fig:1a} shows the numerical results obtained by solving~\eqref{eq:PEP_for_PP} for different values of $N$. We observe that worst-case value of \eqref{eq:PEP_for_PP} behaves as $\cO(\nicefrac{1}{N})$ similarly to the monotone case.

Motivated by these numerical results, we derive the following convergence rates for \ref{eq:PP}.
\begin{theorem}\label{thm:PP_convergence}
    Let $F: \R^d \rightrightarrows \R^d$ be maximally $\rho$-star-negative comonotone. Then, for any $\gamma > 2\rho$ the iterates produced by \ref{eq:PP} are well-defined and satisfy $\forall N \geq 1$:
    \begin{equation}
        \frac{1}{N}\sum\limits_{k=1}^N \|x^k - x^{k-1}\|^2 \leq \frac{\gamma\|x^0 - x^*\|^2}{(\gamma - 2\rho)N}. \label{eq:PP_convergence_star_neg_comon}
    \end{equation}
    If $F: \R^d \rightrightarrows \R^d$ is maximally $\rho$-negative comonotone, then for any $\gamma > 2\rho$ and any $k \geq 1$ the iterates produced by \ref{eq:PP} satisfy
    \begin{equation*}
        \|x^{k+1} - x^k\| \leq \|x^k - x^{k-1}\|
    \end{equation*}
    and for any $N \geq 1$:
    \begin{equation}
        \|x^N - x^{N-1}\|^2 \leq \frac{\gamma\|x^0 - x^*\|^2}{(\gamma - 2\rho)N}.\label{eq:PP_convergence_neg_comon}
    \end{equation}
\end{theorem}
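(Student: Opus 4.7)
The plan has two algebraic cores (one for each part of the theorem) plus a short well-posedness argument, all of which combine cleanly.

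Well-definedness of the iteration. By Lemma \ref{lem:expansiveness_of_neg_comon_operator}, maximal $\rho$-negative comonotonicity (and equivalently star-negative comonotonicity via the same lemma proof, restricted to $y=x^*$) is equivalent to $\Id+2\rho F$ being maximally expansive, so $H\eqdef(\Id+2\rho F)^{-1}$ is a single-valued nonexpansive map defined on all of $\R^d$. Writing $\alpha\eqdef\gamma/(2\rho)>1$ and rewriting $x^k=(\Id+\gamma F)(x^{k+1})$ in terms of $y\eqdef(\Id+2\rho F)(x^{k+1})$ yields the fixed-point equation $y=\tfrac{1}{\alpha}x^k+\tfrac{\alpha-1}{\alpha}H(y)$. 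Since $H$ is $1$-Lipschitz, the right-hand side is a strict contraction in $y$ (with factor $1-1/\alpha<1$), so Banach's fixed-point theorem guarantees that $y$, and hence $x^{k+1}=H(y)$, is uniquely determined by $x^k$.

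Averaged bound under star-negative comonotonicity. The iteration gives $F(x^{k+1})=(x^k-x^{k+1})/\gamma$. Plugging this into \eqref{eq:rho_star_neg_comon} at $x=x^{k+1}$ and multiplying by $2\gamma$ gives
\begin{equation*}
2\langle x^k-x^{k+1},\,x^{k+1}-x^*\rangle\ \geq\ -\tfrac{2\rho}{\gamma}\|x^k-x^{k+1}\|^2.
\end{equation*}
Using the three-point identity $2\langle x^k-x^{k+1},x^{k+1}-x^*\rangle=\|x^k-x^*\|^2-\|x^{k+1}-x^*\|^2-\|x^k-x^{k+1}\|^2$ and rearranging yields the one-step contraction
\begin{equation*}
\|x^{k+1}-x^*\|^2\ \leq\ \|x^k-x^*\|^2-\tfrac{\gamma-2\rho}{\gamma}\|x^{k+1}-x^k\|^2.
\end{equation*}
Telescoping from $0$ to $N-1$ and dropping $\|x^N-x^*\|^2\geq 0$ gives \eqref{eq:PP_convergence_star_neg_comon}.

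Monotonicity of the residual under negative comonotonicity. Let $u_k\eqdef x^k-x^{k+1}=\gamma F(x^{k+1})$. Apply \eqref{eq:rho_neg_comon} at $(x,y)=(x^{k+1},x^k)$ and substitute $F(x^{k+1})=u_k/\gamma$, $F(x^k)=u_{k-1}/\gamma$, $x^{k+1}-x^k=-u_k$; after multiplying by $-\gamma$ this becomes $\langle u_k-u_{k-1},u_k\rangle\leq(\rho/\gamma)\|u_k-u_{k-1}\|^2$. Using the identity $\langle u_k-u_{k-1},u_k\rangle=\tfrac{1}{2}(\|u_k\|^2-\|u_{k-1}\|^2+\|u_k-u_{k-1}\|^2)$ and rearranging,
\begin{equation*}
\|u_k\|^2-\|u_{k-1}\|^2\ \leq\ \tfrac{2\rho-\gamma}{\gamma}\|u_k-u_{k-1}\|^2\ \leq\ 0
\end{equation*}
whenever $\gamma>2\rho$, proving $\|x^{k+1}-x^k\|\leq\|x^k-x^{k-1}\|$. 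Since negative comonotonicity implies star-negative comonotonicity, \eqref{eq:PP_convergence_star_neg_comon} still applies; combining monotonicity of the residuals with the averaged bound gives $N\|x^N-x^{N-1}\|^2\leq\sum_{k=1}^N\|x^k-x^{k-1}\|^2\leq\gamma\|x^0-x^*\|^2/(\gamma-2\rho)$, which is \eqref{eq:PP_convergence_neg_comon}.

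The only delicate point is the well-definedness argument for set-valued $F$; the rest is straightforward manipulation of the defining inequalities with the cosine/three-point identities. Compared with the monotone case, the essential novelty is that the contraction coefficient degrades from $1$ to $(\gamma-2\rho)/\gamma$, which is exactly what forces the stepsize restriction $\gamma>2\rho$ and drives both rates.
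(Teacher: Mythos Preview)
Your two algebraic cores coincide with the paper's proof essentially line for line: the paper derives the same one-step inequality $\|x^{k+1}-x^*\|^2\le\|x^k-x^*\|^2-(1-2\rho/\gamma)\|x^k-x^{k+1}\|^2$ from star-negative comonotonicity and telescopes, and then uses $\rho$-negative comonotonicity between $x^k$ and $x^{k+1}$ together with the polarization identity $2\langle a,b\rangle=\|a\|^2+\|b\|^2-\|a-b\|^2$ to obtain the residual monotonicity (your $u_k$ computation is exactly this, just with cleaner notation). So the convergence-rate portion of your argument is correct and matches the paper.

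The one place to be careful is your well-definedness paragraph. Your Banach fixed-point argument is valid in the maximally $\rho$-negative comonotone case, where Lemma~\ref{lem:expansiveness_of_neg_comon_operator} indeed gives that $\Id+2\rho F$ is expansive and hence $H=(\Id+2\rho F)^{-1}$ is globally nonexpansive. But for \emph{star}-negative comonotonicity the lemma only yields $\|x+2\rho F(x)-x^*\|\ge\|x-x^*\|$ (expansiveness relative to $x^*$), which does \emph{not} make $H$ globally $1$-Lipschitz, so the contraction-mapping step does not go through as written. The paper sidesteps this by simply declaring in the preliminaries that maximality of $F$ is assumed precisely so that the proximal step is well-defined; it does not supply a proof either. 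So this is not a gap relative to the paper, but your parenthetical ``and equivalently star-negative comonotonicity via the same lemma proof'' overclaims and should be dropped or replaced by the same standing assumption the paper makes.
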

\begin{proof}
    We start with $\rho$-star-negative comonotone case. From the update rule of \ref{eq:PP} we have
    \begin{align*}
        \|x^{k+1} - x^*\|^2 &= \|x^{k} - x^* - (x^k - x^{k+1}) \|^2\\
        &= \|x^k - x^*\|^2 - 2\langle x^k - x^*,  x^k - x^{k+1}\rangle\\
        &\quad + \|x^k - x^{k+1}\|^2\\
        &= \|x^k - x^*\|^2 - 2\langle x^{k+1} - x^*,  x^k - x^{k+1}\rangle \\
        &\quad - \|x^k - x^{k+1}\|^2.
    \end{align*}
    Since $x^k - x^{k+1} = \gamma F(x^{k+1})$, where $F(x^{k+1})$ is some value of operator $F$ at point $x^{k+1}$, we can apply $\rho$-star-negative comonotonicity and get
    \begin{align*}
        \|x^{k+1} - x^*\|^2 &\leq \|x^k - x^*\|^2 - \left(1 - \frac{2\rho}{\gamma}\right) \|x^k - x^{k+1}\|^2.
    \end{align*}
    Telescoping the above inequality for $k=0,\ldots,N-1$ and changing the index in the summation, we obtain \eqref{eq:PP_convergence_star_neg_comon}. Next, to get the last-iterate convergence we use $\rho$-negative comonotonicity \eqref{eq:rho_neg_comon} inequality written for~$x^k$ and~$x^{k+1}$:
    \begin{align*}
        \frac{1}{\gamma}\langle x^{k-1} - x^k - &(x^k - x^{k+1}), x^k - x^{k+1} \rangle \\
        &\geq -\frac{\rho}{\gamma^2}\|x^{k-1} - x^k - (x^k - x^{k+1})\|^2,
    \end{align*}
    where we use that $\nicefrac{(x^{k-1} - x^k)}{\gamma}$ and $\nicefrac{(x^{k} - x^{k+1})}{\gamma}$ belongs to the values of $F$ at points $x^k$ and $x^{k+1}$ respectively. Multiplying both sides by $\gamma^2$ and rearranging the terms, we get
    \begin{align*}
        \gamma \|x^k - x^{k+1}\|^2 &\leq \gamma\langle x^{k-1} - x^k, x^k - x^{k+1} \rangle \\
        &\quad + \rho \|x^{k-1} + x^{k+1} - 2x^k\|^2.
    \end{align*}
    Finally, using $2\langle a,b \rangle = \|a\|^2 + \|b\|^2 - \|a - b\|^2$, which holds for any $a,b \in \R^d$, and rearranging the terms, we derive
    \begin{align*}
        \frac{\gamma}{2}\|x^k - x^{k+1}\|^2 &\leq \frac{\gamma}{2}\|x^{k-1} - x^k\|^2 \\
        &\quad - \left(\frac{\gamma}{2} - \rho\right)\|x^{k-1} + x^{k+1} - 2x^k\|^2.
    \end{align*}
    Taking into account $\gamma > 2\rho$, we obtain $\|x^{k+1} - x^k\| \leq \|x^k - x^{k-1}\|$. Together with \eqref{eq:PP_convergence_star_neg_comon} it implies \eqref{eq:PP_convergence_neg_comon}.
\end{proof}

First, the result from \eqref{eq:PP_convergence_star_neg_comon} implies only best-iterate $\cO(\nicefrac{1}{N})$ rate -- this result follows from \citep[Lemma~2]{iusem2003inexact}\footnote{We were not aware of the results from \citep{iusem2003inexact} during the work on our paper.}. Such kind of guarantees are weaker the last-iterate ones but they do hold under the more general star-negative comonotonicity assumption. We notice that the result from \eqref{eq:PP_convergence_neg_comon} can be also obtained from non-expansiveness of \ref{eq:PP} update \citep[Proposition 3.13 (iii)]{bauschke2021generalized}\footnote{We were not aware of the results from \citep{bauschke2021generalized} during the work on our paper.}. Note that the guarantee~\eqref{eq:PP_convergence_neg_comon} matches the best-known guarantee for the monotone case (up to the factor $\nicefrac{\gamma}{(\gamma - 2\rho)}$) from~\citep{he2015convergence, gu2020tight}, and it is therefore natural to ask whether {it is possible to improve factor $\nicefrac{\gamma}{(\gamma - 2\rho)}$ in the convergence guarantee of \ref{eq:PP} for the $\rho$-negative comonotone case.}

To answer this question, one can use performance estimation again. In particular, using the trace heuristic for trying to find low-dimensional worst-case examples to~\eqref{eq:PEP_for_PP}, we obtain $2$-dimensional worst-case examples for different values of~$\gamma$ and~$N$, see Figure~\ref{fig:1b} and Figure~\ref{fig:1c}. These figures illustrate that the worst-case examples found numerically correspond to the scaled rotation operators (similar to~\citet{gu2020tight} but with different angles). Moreover, the rotation angle and scaling parameter have non-trivial dependencies on number of iterations. These observations lead to the following result, which shows that the multiplicative cannot be removed asymptotically as $N$ grows.

\begin{theorem}\label{thm:PP_worst_case}
    For any $\rho > 0, \gamma > 2\rho$, and $N \geq \max\{\nicefrac{\rho^2}{\gamma(\gamma-2\rho)},1\}$ there exists $\rho$-negatively comonotone single-valued operator $F: \R^d \to \R^d$ such that after $N$ iterations \ref{eq:PP} with stepsize $\gamma$ produces $x^{N+1}$ satisfying
    \begin{equation}
        \|F(x^{N+1})\|^2 \geq \frac{\|x^0 - x^*\|^2}{\gamma(\gamma - 2\rho)N\left(1 + \frac{1}{N}\right)^{N+1}}.\label{eq:PP_worst_case}
    \end{equation}
    Indeed, one can pick the two-dimensional $F:\R^2\to \R$: $F(x) = \alpha A x$ with $$A = \begin{pmatrix} \cos\theta & -\sin\theta \\ \sin\theta & \cos\theta \end{pmatrix},\quad \alpha = \frac{|\cos \theta|}{\rho}$$ for $\theta \in (\nicefrac{\pi}{2}, \pi)$ such that $\cos \theta = - \frac{\rho}{\sqrt{N\gamma(\gamma - 2\rho)}}$.
\end{theorem}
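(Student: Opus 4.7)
The plan is to verify, by direct $2\times 2$ matrix arithmetic, that the proposed scaled rotation realizes the claimed lower bound. The construction is prescribed in the statement, so the proof amounts to (i) checking admissibility of $F$, (ii) writing the \ref{eq:PP} iterates explicitly, and (iii) a substitution that collapses nicely.

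\textbf{Step 1 (admissibility).} I would first invoke the spectral characterization from Theorem~\ref{thm:spectral_viewpoint_on_neg_comon} to verify that $F(x)=\alpha A x$ is $\rho$-negative comonotone. The eigenvalues of $\nabla F = \alpha A$ are $\alpha e^{\pm i\theta}$, so $\Re(1/\lambda)=\cos\theta/\alpha$. With $\theta \in (\pi/2,\pi)$ (hence $\cos\theta<0$) and $\alpha=|\cos\theta|/\rho$, one gets $\Re(1/\lambda) = -\rho$, saturating the spectral constraint. This is consistent with the heuristic that extremal examples sit on the boundary of the admissible class.

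\textbf{Step 2 (explicit iterates).} Because $F$ is linear, the implicit update reads $x^{k+1}=M x^k$ with $M:=(I+\gamma\alpha A)^{-1}$. Since $A$ is a rotation, $I+\gamma\alpha A$ is a scaled rotation, and so is $M$. Denoting by $r>0$ the common modulus of its two complex-conjugate eigenvalues, a direct computation gives
\begin{equation*}
r^2 \;=\; \frac{1}{1+2\gamma\alpha\cos\theta+\gamma^2\alpha^2}.
\end{equation*}
The unique solution of $0 \in F(x)$ is $x^*=0$ because $\alpha A$ is invertible, so $\|x^0\| = \|x^0-x^*\|$. Using that $A$ preserves norms, and that $M$ acts as $r$ times an orthogonal transformation, I would conclude
\begin{equation*}
\|F(x^{N+1})\|^2 \;=\; \alpha^2 \|x^{N+1}\|^2 \;=\; \alpha^2 r^{2(N+1)} \|x^0-x^*\|^2.
\end{equation*}

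\textbf{Step 3 (substitution and feasibility).} It remains to plug in the prescribed value $\cos\theta=-\rho/\sqrt{N\gamma(\gamma-2\rho)}$, i.e.\ $\alpha=1/\sqrt{N\gamma(\gamma-2\rho)}$. The algebra collapses pleasantly: $\alpha^2 = 1/(N\gamma(\gamma-2\rho))$, $2\gamma\alpha\cos\theta = -2\rho/(N(\gamma-2\rho))$, and $\gamma^2\alpha^2 = \gamma/(N(\gamma-2\rho))$, which combine to give $1+2\gamma\alpha\cos\theta+\gamma^2\alpha^2 = 1+1/N$. Substituting back reproduces the bound \eqref{eq:PP_worst_case} verbatim. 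The only remaining check is that the choice of $\theta$ is feasible: $|\cos\theta| \leq 1$ is equivalent to $N\gamma(\gamma-2\rho) \geq \rho^2$, which is precisely the hypothesis $N \geq \max\{\rho^2/(\gamma(\gamma-2\rho)),1\}$ in the statement.

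The genuinely nontrivial part of this result is not the verification above but guessing the construction in the first place: recognizing from the numerical PEP experiments in Figure~\ref{fig:1b} that extremal instances are planar rotations, and then calibrating both the rotation angle $\theta$ (to match the optimal contraction rate over $N$ steps) and the scaling $\alpha$ (to saturate the negative comonotonicity constraint via Theorem~\ref{thm:spectral_viewpoint_on_neg_comon}). Once the ansatz is fixed, the proof is a short computation on a $2\times 2$ scaled rotation matrix.
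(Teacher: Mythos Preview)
Your proof is correct and follows essentially the same approach as the paper: verify that the prescribed scaled rotation is $\rho$-negative comonotone, compute the contraction factor of $(I+\gamma\alpha A)^{-1}$, unroll, and substitute. The only cosmetic difference is that you invoke the spectral characterization (Theorem~\ref{thm:spectral_viewpoint_on_neg_comon}) in Step~1, whereas the paper checks the defining inequality~\eqref{eq:rho_neg_comon} directly via $\langle \alpha A(x-y), x-y\rangle = \alpha\|x-y\|^2\cos\theta = -\rho\|F(x)-F(y)\|^2$; and the paper additionally \emph{derives} the choice $\alpha=1/\sqrt{N\gamma(\gamma-2\rho)}$ by maximizing $\alpha^2(1+\gamma\alpha^2(\gamma-2\rho))^{-(N+1)}$ over $\alpha$, while you simply plug in the value already supplied by the statement.
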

\begin{proof}
    Consider the linear operator $F(x) = \alpha A x$ described above. First, we verify its $\rho$-negative comonotonicity: for any $x, y \in \R^d$
    \begin{align*}
        \langle F(x) - F(y), x - y \rangle &= \alpha \langle A(x-y), x-y \rangle\\
        &= \alpha \|A(x-y)\|\cdot \|x-y\|\cdot \cos\theta\\
        &= - \frac{\cos^2 \theta}{\rho} \|A(x-y)\|^2\\
        &= -\rho \|F(x) - F(y)\|^2,
    \end{align*}
    where we use $\|A(x-y)\| = \|x-y\|$, since $A$ is the rotation matrix. Next, one can check that $(I + \gamma\alpha A)^{-1}$ equals
    \begin{equation*}
        \frac{1}{1\! +\! \gamma\alpha^2(\gamma\! -\! 2\rho)}\! \begin{pmatrix} \!1 \!+\! \gamma\alpha \cos\theta \!& \gamma\alpha\sin\theta \\  \!-\gamma\alpha\sin\theta & 1\! +\! \gamma\alpha \cos\theta\! \end{pmatrix}.
    \end{equation*}
    Since $x^{k+1} = (I + \gamma\alpha A)^{-1}x^k$, one can verify via direct computations that
    \begin{align*}
         \|x^{k+1}\|^2 = \frac{1}{1 + \gamma\alpha^2(\gamma - 2\rho)}\|x^k\|^2.
    \end{align*}
    Unrolling this identity for $k = N, N-1, \ldots, 0$ and using $x^* = 0$, $\|F(x^{k+1})\| = \alpha\|Ax^{k+1}\| = \alpha\|x^{k+1}\|$, we get
    \begin{equation*}
        \|F(x^{k+1})\|^2 = \alpha^2 \left(\frac{1}{1 + \gamma\alpha^2(\gamma - 2\rho)}\right)^{N+1} \|x^0 - x^*\|^2.
    \end{equation*}
    Maximizing the right-hand side in $\alpha$ we get that the optimal value is $\alpha = \nicefrac{1}{\sqrt{N\gamma(\gamma-2\rho)}}$. Since $\alpha\rho = |\cos\theta|$, we should assume that $N \geq \nicefrac{\rho^2}{\gamma(\gamma - 2\rho)}$. Plugging $\alpha = \nicefrac{1}{\sqrt{N\gamma(\gamma-2\rho)}}$ in the above formula for $\|F(x^{k+1})\|^2$ we get the result.
\end{proof}

Since $\exp(1) \leq (1+ \nicefrac{1}{N})^{N+1} \leq 4$, the above result implies the tightness (up to a multiplicative constant) of Theorem~\ref{thm:PP_convergence}. One should note again that both Theorem~\ref{thm:PP_convergence} and \ref{thm:PP_worst_case} rely on the assumption that $\gamma > 2\rho$ for the proximal operation to be well-defined. That is, these results are valid only for \emph{large enough stepsizes}. This is a relatively rare phenomenon in optimization and variational inequalities. As the next theorem states, \emph{\ref{eq:PP} is not guaranteed to converge if the stepsize is too small}, even if the proximal operation is well-defined.

\begin{theorem}\label{thm:PP_counter_example}
    For any $\rho > 0$ there exists $\rho$-negatively comonotone single-valued operator $F: \R^d \to \R^d$ such that \ref{eq:PP} does not converge to the solution of \ref{eq:VIP} for any $0 < \gamma \leq 2\rho$. In particular, one can take $F(x) = - \nicefrac{x}{\rho}$.
\end{theorem}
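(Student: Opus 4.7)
The plan is to take the candidate operator $F(x) = -x/\rho$ suggested in the statement and directly analyze what \ref{eq:PP} does on it. First, I would verify that $F$ is indeed $\rho$-negatively comonotone: for any $x, y \in \R^d$,
\begin{equation*}
    \langle F(x) - F(y), x - y \rangle = -\tfrac{1}{\rho}\|x-y\|^2 = -\rho\cdot \tfrac{1}{\rho^2}\|x-y\|^2 = -\rho\|F(x)-F(y)\|^2,
\end{equation*}
so~\eqref{eq:rho_neg_comon} holds with equality, and $F$ is maximally so (it is linear and continuous, hence its graph admits no proper extension preserving the inequality). Also, $x^* = 0$ is the unique solution of~\eqref{eq:VIP}.

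Next, I would write out the \ref{eq:PP} update explicitly. The defining equation $x^{k+1} = x^k - \gamma F(x^{k+1})$ becomes $(1 - \gamma/\rho)\, x^{k+1} = x^k$. Two cases then arise:
\begin{itemize}
    \item If $\gamma = \rho$, the equation reduces to $0 = x^k$, which has no solution when $x^k\neq 0$; hence \ref{eq:PP} is not even well-defined from any nonzero initialization.
    \item If $\gamma \in (0, 2\rho]\setminus\{\rho\}$, the iteration is $x^{k+1} = \tfrac{\rho}{\rho-\gamma}\, x^k$, so by induction $x^k = \bigl(\tfrac{\rho}{\rho-\gamma}\bigr)^k x^0$.
\end{itemize}

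Then I would check the modulus of the multiplicative factor $q(\gamma) \eqdef \rho/(\rho-\gamma)$ on the admissible range. For $\gamma \in (0,\rho)$ one has $q(\gamma) > 1$; for $\gamma \in (\rho, 2\rho)$ one has $q(\gamma) < -1$, so $|q(\gamma)|>1$; and for $\gamma = 2\rho$ one has $q(\gamma) = -1$, so $x^k = (-1)^k x^0$ oscillates. In all subcases, starting from any $x^0\ne 0$ the sequence $\{x^k\}$ fails to converge to $x^* = 0$, which proves the claim.

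There is no real obstacle here: the operator is linear, the verification of $\rho$-negative comonotonicity is immediate, and the resulting iteration is a scalar geometric progression whose contraction factor is analyzed by inspection. The only subtlety worth flagging in the write-up is the boundary case $\gamma = \rho$, where the non-convergence must be interpreted as ``the update is not well-defined'' rather than as divergence of an actual trajectory, and the case $\gamma = 2\rho$, where one gets bounded oscillation rather than blow-up.
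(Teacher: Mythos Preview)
Your proposal is correct and follows essentially the same approach as the paper: verify $\rho$-negative comonotonicity of $F(x)=-x/\rho$, solve the \ref{eq:PP} recursion explicitly as $(1-\gamma/\rho)x^{k+1}=x^k$, and check case by case that the resulting multiplicative factor has modulus at least~$1$ (or that the update is ill-defined at $\gamma=\rho$). Your handling of the boundary $\gamma=2\rho$ as the oscillation $x^{k+1}=-x^k$ is in fact slightly more precise than the paper's write-up.
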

\begin{proof}
    First, $F(x) = - \nicefrac{x}{\rho}$ is $\rho$-negative comonotone: for any $x,y \in \R^d$ we have $\langle F(x) - F(y), x- y \rangle = -(\nicefrac{1}{\rho})\|x-y\|^2 = -\rho\|F(x) - F(y)\|^2$. 
    Next, the iterates of \ref{eq:PP} satisfy $x^{k+1} = x^k + \gamma\nicefrac{x^{k+1}}{\rho}$. If $\gamma = \rho$, the next iterate is undefined. If $\gamma = 2\rho$, then $x^{k+1} = x^k$. Finally, when $\gamma \in (0,\rho)\cup (\rho, 2\rho)$ we have $x^{k+1} = \frac{x^k}{(1 - \nicefrac{\gamma}{\rho})}$ implying $\|x^{k+1}\| > \|x^k\|$, i.e., \ref{eq:PP} diverges.
\end{proof}

As a summary, Theorem~\ref{thm:PP_convergence} and Theorem~\ref{thm:PP_counter_example} provide a complete picture of the convergence of \ref{eq:PP} under negative comonotonicity, including the upper bounds, and worst-case examples and counter-examples justifying the need of using large enough stepsizes for \algname{PP} applied to $\rho$-negative comonotone \ref{eq:MI}/\ref{eq:VIP}.

\section{Extragradient-Based Methods}\label{sec:eg_peg}
\paragraph{Extragradient.} The update rule of Extragradient method \citep{korpelevich1976extragradient} is defined as follows:
\begin{equation}\tag{\algname{EG}}\label{eq:EG}
    \begin{aligned}
    \tx^{k} &= x^k  - \gamma_1 F(x^k),\\
    x^{k+1} &= x^k - \gamma_2 F(\tx^k),
    \end{aligned}\quad \forall k \geq 0.
\end{equation}
In its pure form, \ref{eq:EG} has the same extrapolation ($\gamma_1$) and update ($\gamma_2$) stepsizes, i.e., $\gamma_1 = \gamma_2$. However, the existing analysis of \ref{eq:EG} under $\rho$-(star-)negative comonotonicity relies on the usage of $\gamma_2 < \gamma_1$ \citep{diakonikolas2021efficient, pethick2022escaping}. The following lemma sheds some light on this phenomenon.

\begin{lemma}\label{lem:EG_main_lemma}
    Let $F$ be $L$-Lipschitz and $\rho$-star-negative comonotone. Then, for any $k\geq 0$ the iterates produced by \ref{eq:EG} after $k\geq 0$ iterations satisfy
    \begin{eqnarray}
        \|x^{k+1} - x^*\|^2 &\leq& \|x^k - x^*\|^2\notag\\
        && - \gamma_2\left(\gamma_1 - 2\rho - \gamma_2\right)\|F(\tx^k)\|^2 \label{eq:EG_key_inequality_main_1}\\
        && - \gamma_1\gamma_2(1 - L^2\gamma_1^2)\|F(x^k)\|^2. \label{eq:EG_key_inequality_main_2}
    \end{eqnarray}
\end{lemma}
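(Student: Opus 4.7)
The plan is a fairly standard ``expand, split, and bound'' argument for extragradient-type analyses. I would start from the update $x^{k+1} = x^k - \gamma_2 F(\tx^k)$ and expand:
\begin{equation*}
\|x^{k+1} - x^*\|^2 = \|x^k - x^*\|^2 - 2\gamma_2 \langle F(\tx^k), x^k - x^* \rangle + \gamma_2^2 \|F(\tx^k)\|^2.
\end{equation*}
The key observation is that $\rho$-star-negative comonotonicity applies at $\tx^k$, not $x^k$. So I would decompose $x^k - x^* = (\tx^k - x^*) + (x^k - \tx^k) = (\tx^k - x^*) + \gamma_1 F(x^k)$, using the extrapolation step, to split the inner product into
\begin{equation*}
-2\gamma_2\langle F(\tx^k), x^k - x^* \rangle = -2\gamma_2\langle F(\tx^k), \tx^k - x^* \rangle - 2\gamma_1\gamma_2\langle F(\tx^k), F(x^k) \rangle.
\end{equation*}

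Next, I would apply star-negative comonotonicity \eqref{eq:rho_star_neg_comon} at $\tx^k$ to bound the first piece by $2\gamma_2\rho\|F(\tx^k)\|^2$. For the cross term $-2\gamma_1\gamma_2\langle F(\tx^k), F(x^k)\rangle$, I would invoke the polarization identity $-2\langle a, b\rangle = \|a-b\|^2 - \|a\|^2 - \|b\|^2$, which produces one negative quadratic in each of $\|F(\tx^k)\|^2$ and $\|F(x^k)\|^2$ (good for absorbing into the final inequality), plus a positive $\gamma_1\gamma_2\|F(\tx^k)-F(x^k)\|^2$ term. I would then use $L$-Lipschitzness together with $\tx^k - x^k = -\gamma_1 F(x^k)$ to bound $\|F(\tx^k) - F(x^k)\|^2 \leq L^2\gamma_1^2 \|F(x^k)\|^2$.

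Finally, I would collect all the terms. The coefficient of $\|F(\tx^k)\|^2$ becomes $\gamma_2^2 + 2\gamma_2\rho - \gamma_1\gamma_2 = -\gamma_2(\gamma_1 - 2\rho - \gamma_2)$, which gives the first residual contribution \eqref{eq:EG_key_inequality_main_1}, and the coefficient of $\|F(x^k)\|^2$ becomes $L^2\gamma_1^3\gamma_2 - \gamma_1\gamma_2 = -\gamma_1\gamma_2(1 - L^2\gamma_1^2)$, giving~\eqref{eq:EG_key_inequality_main_2}.

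The main ``obstacle'' is not a conceptual difficulty but a design choice: it is the decision to decompose $x^k - x^*$ via the extrapolated point $\tx^k$ rather than applying star-negative comonotonicity directly at $x^k$. This decomposition is precisely what makes the stepsize requirement emerge as $\gamma_1 > 2\rho + \gamma_2$ (so $\gamma_2 < \gamma_1$ is forced) and what produces the clean separation between the $\|F(\tx^k)\|^2$ and $\|F(x^k)\|^2$ terms. All remaining steps are straightforward manipulations that should not require anything beyond the polarization identity and the Lipschitz hypothesis.
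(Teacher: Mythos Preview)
Your proposal is correct and follows essentially the same approach as the paper: expand $\|x^{k+1}-x^*\|^2$, decompose $x^k-x^*$ through $\tx^k$ to apply star-negative comonotonicity at $\tx^k$, use the polarization identity on the cross term $\langle F(\tx^k),F(x^k)\rangle$, and bound $\|F(\tx^k)-F(x^k)\|^2$ via Lipschitzness. The bookkeeping of coefficients matches exactly.
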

\begin{proof}[Proof sketch]
    The proof follows a quite standard pattern: we start with expanding the square $\|x^{k+1} - x^*\|^2$ and then rearrange the terms to get $\|x^k - x^*\|^2 - 2\gamma_2 \langle \tx^k - x^*, F(\tx^k) \rangle - 2\gamma_1\gamma_2 \langle F(x^k), F(\tx^k) \rangle + \gamma_2^2 \|F(\tx^k)\|^2$ in the right-hand side. After that, it remains to estimate inner products. From $\rho$-star-negative comonotonicity we have $- 2\gamma_2 \langle \tx^k - x^*, F(\tx^k) \rangle  \leq 2\rho\gamma_2 \|F(\tx^k)\|^2$. For the second inner product $- 2\gamma_1\gamma_2 \langle F(x^k), F(\tx^k) \rangle$ we use $2\langle a,b \rangle = \|a\|^2 + \|b\|^2 - \|a - b\|^2$, which holds for any $a,b \in \R^d$, and then apply $L$-Lipschitzness to upper bound the term $\gamma_1\gamma_2\|F(x^k) - F(\tx^k)\|^2$. Finally, we rearrange the terms, see the full proof in Appendix~\ref{appendix:eg}.
\end{proof}

From the above result one can easily notice that the choice of $\gamma_2 \leq \gamma_1 - 2\rho$ and $\gamma_1 < \nicefrac{1}{L}$ implies best-iterate convergence in terms of the squared norm of the operator. However, in this proof, $\gamma_2$ should be positive, i.e., this proof is valid only for $\gamma_1 > 2\rho$. In other words, one can derive best-iterate $\cO(\nicefrac{1}{N})$ rate for \ref{eq:EG} whenever $\rho < \nicefrac{1}{2L}$, which is also known from \citet{pethick2022escaping} (though \citet{pethick2022escaping} do not provide analogs of Lemma~\ref{lem:EG_main_lemma}).

Next, to get the last-iterate convergence of \ref{eq:EG} we assume $\rho$-negative comonotonicity, since even for \ref{eq:PP} -- a simpler algorithm -- we need to do this. Moreover, even in the monotone case the existing proofs of the last-iterate convergence of \ref{eq:EG} rely on the usage of same stepsizes $\gamma_1 = \gamma_2 = \gamma$ \citep{gorbunov2021extragradient, cai2022tight}. This partially can be explained by the following fact: $\|F(x^{k+1})\|$ can be larger than $\|F(x^k)\|$ if $\gamma_1 \neq \gamma_2$ \citep{gorbunov2021extragradient}. Therefore, we also assume that $\gamma_1 = \gamma_2 = \gamma$ to derive last-iterate convergence rate.

However, as Lemma~\ref{lem:EG_main_lemma} indicates, the choice $\gamma_1 = \gamma_2 = \gamma$ may complicate the proof because the term from \eqref{eq:EG_key_inequality_main_1} becomes non-negative. Moreover, it is natural to expect that the proof will work for smaller range of $\rho$. Nevertheless, using computer-assisted approach, we derive that for any $\rho \leq \nicefrac{1}{8L}$ and $4\rho\leq \gamma \leq \nicefrac{1}{2L}$ \ref{eq:EG} the iterates of \ref{eq:EG} satisfy $\|F(x^{k+1})\| \leq \|F(x^k)\|$ which is the main building block of the obtained proof.

We summarized the derived upper-bounds for \ref{eq:EG} in the following result.

\begin{theorem}\label{thm:EG_convergence}
    Let $F$ be $L$-Lipschitz and $\rho$-star-negative comonotone with $\rho < \nicefrac{1}{2L}$. Then, for any $2\rho < \gamma_1 < \nicefrac{1}{L}$ and $0 < \gamma_2 \leq \gamma_1 - 2\rho$ the iterates produced by \ref{eq:EG} after $N\geq 0$ iteration satisfy
    \begin{equation}
        \frac{1}{N+1}\sum\limits_{k=0}^N \|F(x^k)\|^2 \leq \frac{\|x^0 - x^*\|^2}{\gamma_1\gamma_2(1 - L^2\gamma_1^2)(N+1)}. \label{eq:EG_best_iterate}
    \end{equation}
    If, in addition, $F$ is $\rho$-negative comonotone with $\rho \leq \nicefrac{1}{8L}$ and $\gamma_1 = \gamma_2 = \gamma$ such that $4\rho \leq \gamma \leq \nicefrac{1}{2L}$, then for any $k \geq 0$ the iterates produced by \ref{eq:EG} satisfy $\|F(x^{k+1})\| \leq \|F(x^k)\|$ and for any $N\geq 1$
    \begin{equation}
        \|F(x^N)\|^2 \leq \frac{28 \|x^0 - x^*\|^2}{N\gamma^2 + 320\gamma \rho}. \label{eq:EG_last_iterate}
    \end{equation}
\end{theorem}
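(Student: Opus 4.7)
The plan is to prove the two bounds separately, with the best-iterate bound being a direct consequence of Lemma~\ref{lem:EG_main_lemma} and the last-iterate bound requiring an auxiliary monotonicity statement on $\|F(x^k)\|$ combined with a Lyapunov argument.

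For the \textbf{best-iterate} bound, the hypothesis $\gamma_2 \leq \gamma_1 - 2\rho$ makes the coefficient $(\gamma_1 - 2\rho - \gamma_2)$ in Lemma~\ref{lem:EG_main_lemma} non-negative, so the summand proportional to $\|F(\tilde x^k)\|^2$ is non-positive and can be dropped. Telescoping the resulting clean descent inequality from $k=0$ to $k=N$ and using $\|x^{N+1}-x^*\|^2 \geq 0$ yields $\gamma_1\gamma_2(1-L^2\gamma_1^2)\sum_{k=0}^N \|F(x^k)\|^2 \leq \|x^0 - x^*\|^2$, which is exactly \eqref{eq:EG_best_iterate}.

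The \textbf{last-iterate} bound is harder because, with $\gamma_1 = \gamma_2 = \gamma$, the first term of Lemma~\ref{lem:EG_main_lemma} becomes $+2\rho\gamma\|F(\tilde x^k)\|^2$ and no longer provides descent. I would proceed in two steps. First, prove the key claim $\|F(x^{k+1})\| \leq \|F(x^k)\|$ under $\rho \leq \nicefrac{1}{8L}$ and $4\rho \leq \gamma \leq \nicefrac{1}{2L}$ by constructing a weighted combination of the interpolation inequalities available around one iteration: $\rho$-negative comonotonicity applied to the pairs $(x^k,\tilde x^k)$, $(\tilde x^k,x^{k+1})$, $(x^k,x^{k+1})$, and $(\tilde x^k,\tilde x^{k+1})$, Lipschitz bounds on the same pairs, and the update identities $\tilde x^k = x^k - \gamma F(x^k)$, $x^{k+1} = x^k - \gamma F(\tilde x^k)$. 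The appropriate weights, found numerically via PEP and then verified symbolically, should yield a sum-of-squares certificate for $\|F(x^k)\|^2 - \|F(x^{k+1})\|^2 \geq 0$. This is the main obstacle: unlike the monotone case \citep{gorbunov2021extragradient,cai2022tight}, the extra slack $2\rho\gamma\|F(\tilde x^k)\|^2$ has to be fully absorbed by the Lipschitz contributions, and the parameter window $\rho \leq \nicefrac{1}{8L}$, $\gamma \geq 4\rho$ is precisely where this absorption becomes tight.

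Second, given the monotonicity of $\|F(x^k)\|$, I would set up a Lyapunov function of the form $V_k = \|x^k - x^*\|^2 + (A\gamma\rho + B k\gamma^2)\|F(x^k)\|^2$ for constants $A,B$ depending only on $L\gamma$ and $\rho/\gamma$. Combining Lemma~\ref{lem:EG_main_lemma} at $\gamma_1=\gamma_2=\gamma$, the Lipschitz bound $\|F(\tilde x^k)\|^2 \leq (1+L\gamma)^2\|F(x^k)\|^2$, and the Step~1 monotonicity (which lets us absorb $(k+1)\|F(x^{k+1})\|^2 \leq k\|F(x^{k+1})\|^2 + \|F(x^k)\|^2$) should yield $V_{k+1}\leq V_k$ for a suitable tuning of $A,B$. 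Telescoping gives $V_N \leq V_0$; using the trivial lower bound $V_N \geq (A\gamma\rho + BN\gamma^2)\|F(x^N)\|^2$ and the upper bound $V_0 \leq C\|x^0-x^*\|^2$ (via $\|F(x^0)\| \leq L\|x^0-x^*\|$), then optimizing $A,B$ within the feasibility window, produces the explicit constants $28$ and $320$ in \eqref{eq:EG_last_iterate}.
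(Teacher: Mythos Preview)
Your treatment of the best-iterate bound~\eqref{eq:EG_best_iterate} is correct and matches the paper: drop the nonnegative $\|F(\tilde x^k)\|^2$ term in Lemma~\ref{lem:EG_main_lemma} and telescope.

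For the last-iterate bound, your Step~1 is in the right spirit (and in fact the paper needs only three inequalities: Lipschitzness for $(\tilde x^k,x^{k+1})$ and $\rho$-negative comonotonicity for $(\tilde x^k,x^{k+1})$ and $(x^k,x^{k+1})$, with weights $2$, $\nicefrac{1}{\gamma}$, $\nicefrac{2}{\gamma}$). However, your Step~2 has a genuine gap. Using the crude Lipschitz estimate $\|F(\tilde x^k)\|^2\le(1+L\gamma)^2\|F(x^k)\|^2$ together with only the \emph{bare} monotonicity $\|F(x^{k+1})\|\le\|F(x^k)\|$, the best you can hope for is $V_{k+1}-V_k\le\bigl[2\gamma\rho(1+L\gamma)^2-\gamma^2(1-L^2\gamma^2)+B\gamma^2\bigr]\|F(x^k)\|^2$, so you need $B\le(1-L^2\gamma^2)-\tfrac{2\rho}{\gamma}(1+L\gamma)^2$. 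At the boundary $\rho=\nicefrac{1}{8L}$, $\gamma=\nicefrac{1}{2L}$ this gives $B\le\tfrac34-\tfrac98=-\tfrac38<0$, so no positive $B$ exists and the Lyapunov argument collapses. A short optimization shows your scheme only closes for $\rho\lesssim\nicefrac{1}{12L}$, not for the full range $\rho\le\nicefrac{1}{8L}$ claimed in~\eqref{eq:EG_last_iterate}.

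The paper avoids this by strengthening Step~1 to a \emph{quantitative} inequality (Lemma~\ref{lem:EG_norm_decreases}), namely
\[
\|F(x^{k+1})\|^2\le\|F(x^k)\|^2-\Bigl(\tfrac12-\tfrac{\rho}{\gamma}\Bigr)\|F(\tilde x^k)-F(x^{k+1})\|^2,
\]
and then bounds $\|F(\tilde x^k)\|^2$ not via $F(x^k)$ but via $F(x^{k+1})$ using Young's inequality: $\|F(\tilde x^k)\|^2\le\tfrac54\|F(x^{k+1})\|^2+5\|F(\tilde x^k)-F(x^{k+1})\|^2$. The residual term from the strengthened Step~1, multiplied by the coefficient $\bigl((k+1)\gamma^2(\cdots)+40\gamma\rho\bigr)$ of $\|F(x^{k+1})\|^2$ in the Lyapunov function, is exactly what absorbs the $5\|F(\tilde x^k)-F(x^{k+1})\|^2$ correction; after this cancellation one is left with $\tfrac52\gamma\rho(\|F(x^{k+1})\|^2-\|F(x^k)\|^2)\le0$ and $-10\rho(\gamma-4\rho)\|F(\tilde x^k)-F(x^{k+1})\|^2\le0$. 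Without that extra negative residual in Step~1 and the change from $F(x^k)$ to $F(x^{k+1})$ in the Young bound, the argument does not reach $\rho=\nicefrac{1}{8L}$.
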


The results similar to \eqref{eq:EG_best_iterate} are known in the literature: \citet{diakonikolas2021efficient} derives best-iterate $\cO(\nicefrac{1}{N})$ convergence for $\rho < \nicefrac{1}{8L}$ and \citet{pethick2022escaping} generalizes it to the case of any $\rho < \nicefrac{1}{2L}$. In this sense, \eqref{eq:EG_best_iterate} recovers the one from \citet{pethick2022escaping}, though the proof is different.

Next, the last-iterate convergence result from \eqref{eq:EG_last_iterate} holds for any $\rho \leq \nicefrac{1}{8L}$, which is much smaller than the range $\rho < \nicefrac{1}{2L}$ allowed for the best-iterate result. Nevertheless, the previous best-known last-iterate rate requires $\rho$ to be smaller than $\nicefrac{1}{16L}$ \citep{luo2022last}, which is $2$ times smaller than what is allowed for \eqref{eq:EG_last_iterate}.

This discussion naturally leads us to the following question: \emph{for given $L > 0$ what is the maximal possible $\rho$ for which there exists a choice of stepsizes in \ref{eq:EG} such that it converges for any $\rho$-negative comonotone $L$-Lipschitz operator $F$?} This question is partially addressed by \citet{pethick2022escaping}, who prove that if $\gamma_1 = \nicefrac{1}{L}$, then for $\rho \geq \nicefrac{(1 - L\gamma_2)}{2L}$ \ref{eq:EG} does not necessary converge. Guided by the results obtained for \ref{eq:PP}, we make a further step and derive the following statement.

\begin{theorem}\label{thm:EG_counter_example}
    For any $L > 0$, $\rho \geq \nicefrac{1}{2L}$, and any choice of stepsizes $\gamma_1, \gamma_2 > 0$ there exists $\rho$-negative comonotone $L$-Lipschitz operator $F$ such that \ref{eq:EG} does not necessary converges on solving \ref{eq:VIP} with this operator $F$. In particular, for $\gamma_1 > \nicefrac{1}{L}$ it is sufficient to take $F(x) = L x$, and for $0 < \gamma_1 \leq \nicefrac{1}{L}$ one can take $F(x) = L A x$, where $x \in \R^2$, $$A = \begin{pmatrix} \cos\theta & -\sin\theta \\ \sin\theta & \cos\theta \end{pmatrix},\quad \theta = \frac{2\pi}{3}.$$ 
\end{theorem}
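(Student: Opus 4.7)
My plan is to split on whether $\gamma_1$ exceeds $\nicefrac{1}{L}$ or not, exhibiting in each regime an explicit \emph{linear} operator whose \ref{eq:EG} iteration is just the iteration of a fixed matrix; divergence then reduces to showing that matrix has a spectral radius strictly greater than~$1$. The easy regime is $\gamma_1 > \nicefrac{1}{L}$: take $F(x) = Lx$, which is $L$-Lipschitz, monotone, and hence $\rho$-negative comonotone for every $\rho \geq 0$, with unique solution $x^* = 0$. A direct substitution into \ref{eq:EG} gives
\[
    x^{k+1} = \bigl(1 + \gamma_2 L(\gamma_1 L - 1)\bigr)\, x^k,
\]
and since $\gamma_2 L(\gamma_1 L - 1) > 0$, this satisfies $\|x^{k+1}\| > \|x^k\|$ for every $x^0 \neq 0$, so \ref{eq:EG} diverges. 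This mirrors the classical divergence of gradient descent with too large a stepsize.

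The substantive case is $0 < \gamma_1 \leq \nicefrac{1}{L}$, where no monotone example can force divergence, so I would move to the two-dimensional rotation construction in the spirit of Theorem~\ref{thm:PP_worst_case}. Let $F(x) = L A x$ with $A$ the $2\pi/3$-rotation on $\R^2$. Lipschitzness is immediate from $\|A(x-y)\| = \|x-y\|$, and the identity
\[
    \langle F(x)-F(y),\, x-y\rangle \;=\; L\cos(2\pi/3)\|x-y\|^2 \;=\; -\tfrac{1}{2L}\|F(x)-F(y)\|^2
\]
shows $F$ is $\tfrac{1}{2L}$-negative comonotone, hence $\rho$-negative comonotone for every $\rho \geq \tfrac{1}{2L}$. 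The unique solution is $x^* = 0$, and \ref{eq:EG} takes the linear form $x^{k+1} = M x^k$ with $M = I - \gamma_2 L A + \gamma_1\gamma_2 L^2 A^2$. Since $M$ is a polynomial in $A$, it shares the Hermitian-orthonormal eigenbasis of $A$ in $\CC^2$, so $\|Mx\| = |\mu|\,\|x\|$ for every $x \in \R^2$, where $\mu$ is the eigenvalue of $M$ attached to the eigenvalue $\omega := e^{2\pi i/3}$ of $A$.

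Thus divergence reduces to the single algebraic claim $|\mu| > 1$ for every admissible pair of stepsizes. Writing $a := \gamma_1 L \in (0,1]$ and $b := \gamma_2 L > 0$, the eigenvalue equals $\mu = 1 - b\omega + ab\omega^2$. Expanding $|\mu|^2 = \mu\bar\mu$ and using $\omega^3 = 1$ together with $\omega + \omega^2 = -1$ to collect the nine cross-terms, I expect to obtain
\[
    |\mu|^2 \;=\; 1 + b\bigl((1 - a) + b(1 + a + a^2)\bigr),
\]
which is strictly greater than~$1$ for all $a \in (0,1]$ and $b > 0$, since $1-a \geq 0$ and $1+a+a^2 > 0$. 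Combining with $\|x^{k+1}\| = |\mu|\,\|x^k\|$ gives geometric blow-up from any $x^0 \neq 0$. The main obstacle is the bookkeeping in the $|\mu|^2$ expansion: the particular choice $\theta = 2\pi/3$ is what simultaneously saturates the comonotonicity threshold $\rho = \nicefrac{1}{2L}$ and makes every contribution to $|\mu|^2 - 1$ a non-negative multiple of $b$, a balance that would not survive for smaller rotation angles.
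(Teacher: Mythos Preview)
Your proposal is correct and follows essentially the same route as the paper's proof: the same case split on $\gamma_1$, the same two operators, and the same reduction to the spectral radius of the linear map $M=I-\gamma_2 LA+\gamma_1\gamma_2 L^2 A^2$. The only cosmetic difference is that the paper writes out the $2\times 2$ matrix $M$ entrywise and reads off $\lambda_{1,2}=1+\tfrac{\gamma_2 L}{2}(1-\gamma_1 L)\pm i\,\tfrac{\sqrt{3}\gamma_2 L}{2}(1+\gamma_1 L)$, whereas you compute the same eigenvalue directly as $\mu=1-b\omega+ab\omega^2$ and expand $|\mu|^2$ using $\omega+\omega^2=-1$; both yield $|\mu|^2=1+b(1-a)+b^2(1+a+a^2)>1$, and your observation that $M$ is a real polynomial in a planar rotation (hence a scaled rotation, so $\|Mx\|=|\mu|\,\|x\|$ for all $x$) makes the passage from ``eigenvalue outside the unit disc'' to ``divergence from every $x^0\neq 0$'' slightly more explicit than in the paper.
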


This result corroborates Theorem~\ref{thm:PP_counter_example} and known relationship between \ref{eq:EG} and \ref{eq:PP}. That is, from the one side, it is known that \ref{eq:EG} can be seen as an approximation of \ref{eq:PP} \citep[Theorem 1]{mishchenko2020revisiting}. Since for \ref{eq:PP} converges only for the stepsizes larger than $2\rho$, it is natural to expect that \ref{eq:EG} also needs to have at least one stepsize larger than $2\rho$ (otherwise, it can be seen as an approximation of \ref{eq:PP} with stepsize not larger than $2\rho$ that is known to be non-convergent). From the other side, unlike \ref{eq:PP}, \ref{eq:EG} does not converge for arbitrary large stepsizes, which is a standard phenomenon for explicit methods in optimization. In particular, one has to take $\gamma_1 \leq \nicefrac{1}{L}$ (otherwise there exists a ``very good'' -- $L$-cocoercive -- operator such that \ref{eq:EG} diverges). These two observations explain the intuition behind Theorem~\ref{thm:EG_counter_example}.

\paragraph{Optimistic gradient.} Optimistic gradient \citep{popov1980modification} is a single-call version of \ref{eq:EG} having the following form:
\begin{equation}\tag{\algname{OG}}\label{eq:OG}
    \begin{aligned}
    \tx^{k} &= x^k  - \gamma_1 F(\tx^{k-1}),\quad \forall k > 0,\\
    x^{k+1} &= x^k - \gamma_2 F(\tx^k),\quad \forall k \geq 0,
    \end{aligned}
\end{equation}
where $\tx^0 = x^0$. Guided by the results and intuition developed for \ref{eq:EG}, here we also deviate from the original form of \ref{eq:OG}, which has $\gamma_1 = \gamma_2$, and allow $\gamma_1$ and $\gamma_2$ being different. The main goal of the rest of this section is in the obtaining the results on the convergence of \ref{eq:OG} similar to what are derived for \ref{eq:EG} earlier in this section.

Before we move on, we would like to highlight the challenges in the analysis of \ref{eq:OG}. Although \ref{eq:EG} and \ref{eq:OG} can both be seen as approximations of \ref{eq:PP} \citep{mokhtari2020unified}, they have some noticeable theoretical differences going beyond algorithmic ones. For example, even for monotone $L$-Lipschitz operator $F$ the iterates produced by \ref{eq:OG} do not satisfy $\|F(x^{k+1})\| \leq \|F(x^k)\|$ or $\|F(\tx^k)\| \leq \|F(x^k)\|$ in general \citep{gorbunov2022last}, while for \ref{eq:EG} $\|F(x^{k+1})\| \leq \|F(x^k)\|$ holds \citep{gorbunov2021extragradient}. This fact makes the analysis of \ref{eq:OG} more complicated than in the case of \ref{eq:EG}. Moreover, the known convergence results in the monotone case for \ref{eq:OG} require smaller stepsizes than for \ref{eq:EG} \citep{gorbunov2022last, cai2022tight}. In view of the obtained results for \ref{eq:PP} and \ref{eq:EG}, this fact highlights non-triviality of obtaining convergence results for \ref{eq:OG} under $\rho$-negative comonotonicity for the same range of allowed values $\rho$ as for \ref{eq:EG}.

Nevertheless, we obtain the best-iterate $\cO(\nicefrac{1}{N})$ convergence of \ref{eq:OG} for any $\rho < \nicefrac{1}{2L}$, i.e., for the same range of $\rho$ as for \ref{eq:EG}. We also derive last-iterate $\cO(\nicefrac{1}{N})$ convergence of \ref{eq:OG} but for $\rho \leq \nicefrac{5}{62L}$, which is a smaller range than we have for \ref{eq:EG}. The results are summarized below.

\begin{theorem}\label{thm:OG_convergence}
    Let $F$ be $L$-Lipschitz and $\rho$-star-negative comonotone with $\rho < \nicefrac{1}{2L}$. Then, for any $2\rho < \gamma_1 < \nicefrac{1}{L}$ and $0 < \gamma_2 \leq \min\{\nicefrac{1}{L} - \gamma_1, \gamma_1 - 2\rho\}$ the iterates produced by \ref{eq:OG} after $N\geq 0$ iteration satisfy
    \begin{equation}
        \frac{1}{N+1}\sum\limits_{k=0}^N \|F(x^k)\|^2 \leq \frac{\|x^0 - x^*\|^2}{\gamma_1\gamma_2(1 - L^2(\gamma_1 + \gamma_2)^2)(N+1)}.
        \label{eq:OG_best_iterate}
    \end{equation}
    If, in addition, $F$ is $\rho$-negative comonotone with $\rho \leq \nicefrac{5}{62L}$ and $\gamma_1 = \gamma_2 = \gamma$ such that $4\rho \leq \gamma \leq \nicefrac{10}{31L}$, then for any $N \geq 1$ the iterates produced by \ref{eq:OG} satisfy
    \begin{equation}
        \|F(x^N)\|^2 \leq \frac{717\|x^0 - x^*\|^2}{N \gamma(\gamma - 3\rho) + 800\gamma^2}. \label{eq:OG_last_iterate}
    \end{equation}
\end{theorem}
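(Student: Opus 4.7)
The plan is to mirror the two-step structure used in the proof of Theorem~\ref{thm:EG_convergence}: first derive a one-step inequality analogous to Lemma~\ref{lem:EG_main_lemma} that telescopes to give \eqref{eq:OG_best_iterate}, and then build a finer Lyapunov argument for the last-iterate bound \eqref{eq:OG_last_iterate}. For the best-iterate part, I would start from
\[
    \|x^{k+1}-x^*\|^2 = \|x^k-x^*\|^2 - 2\gamma_2 \langle F(\tx^k), x^k-x^* \rangle + \gamma_2^2 \|F(\tx^k)\|^2
\]
and substitute the decomposition $x^k - x^* = (\tx^k - x^*) + \gamma_1 F(\tx^{k-1})$ so that $\rho$-star-negative comonotonicity can be applied at $\tx^k$ to control $-2\gamma_2 \langle F(\tx^k), \tx^k - x^*\rangle \leq 2\rho \gamma_2 \|F(\tx^k)\|^2$. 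The leftover cross term $-2\gamma_1\gamma_2 \langle F(\tx^k), F(\tx^{k-1})\rangle$ will be handled via the polarization identity $-2\langle a, b \rangle = \|a-b\|^2 - \|a\|^2 - \|b\|^2$, with the Lipschitz bound $\|F(\tx^k)-F(\tx^{k-1})\|^2 \leq L^2\|\tx^k - \tx^{k-1}\|^2$ applied to the identity $\tx^k - \tx^{k-1} = -(\gamma_1+\gamma_2) F(\tx^{k-1}) + \gamma_1 F(\tx^{k-2})$, which is exactly what produces the characteristic factor $L^2(\gamma_1+\gamma_2)^2$.

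To obtain a clean descent in $\|F(x^k)\|^2$ (rather than $\|F(\tx^k)\|^2$), the natural next move is to package the residual $\|F(\tx^{k-1})\|^2$ terms into a Lyapunov function of the form $\Phi_k = \|x^k - x^*\|^2 + a \|F(\tx^{k-1})\|^2$, choosing $a>0$ so that the one-step inequality reads $\Phi_{k+1} \leq \Phi_k - \gamma_1\gamma_2(1 - L^2(\gamma_1+\gamma_2)^2) \|F(x^k)\|^2$; the initialization $\tx^0 = x^0$ (equivalently, setting $F(\tx^{-1}) = 0$) ensures $\Phi_0 = \|x^0 - x^*\|^2$. The conversion from $\|F(\tx^k)\|^2$ to $\|F(x^k)\|^2$ is absorbed through $\|F(\tx^k) - F(x^k)\| \leq L\gamma_1 \|F(\tx^{k-1})\|$ and tuned against $a$. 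Summing yields exactly~\eqref{eq:OG_best_iterate}, and the admissibility of the stepsize window $\gamma_1 > 2\rho$, $\gamma_2 \leq \min\{\nicefrac{1}{L}-\gamma_1, \gamma_1 - 2\rho\}$ comes from requiring both the star-NC and the Lipschitz descent coefficients to stay non-negative.

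For the last-iterate bound~\eqref{eq:OG_last_iterate}, the strategy is more delicate because, as emphasized after Lemma~\ref{lem:EG_main_lemma}, $\|F(x^k)\|$ need not be monotone along \ref{eq:OG} iterates. I would therefore look for a richer Lyapunov function of the form
\[
    \Psi_k = A_k \|x^k - x^*\|^2 + B_k\|F(x^k)\|^2 + C_k\|F(\tx^{k-1})\|^2 + D_k\langle F(x^k), F(\tx^{k-1})\rangle,
\]
whose coefficients depend on the iteration index~$k$ (to produce the linear-in-$N$ factor $N\gamma(\gamma-3\rho)$) and on $\gamma, \rho, L$, and which obeys a recursion $\Psi_{k+1} \leq \Psi_k$. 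Once such a $\Psi$ is available, the bound on $\|F(x^N)\|^2$ would drop out from telescoping $\Psi_N \leq \Psi_0$ combined with the best-iterate inequality applied to the first few steps to pin down the additive constant $800\gamma^2$. The main obstacle is identifying the right Lyapunov function: the non-round constants ($\nicefrac{5}{62}$, $\nicefrac{10}{31}$, $717$, $800$) strongly suggest that the coefficients $A_k, B_k, C_k, D_k$ were discovered numerically via the performance estimation framework of Section~\ref{s:contrib} and then verified symbolically, rather than found by a transparent algebraic manipulation. A secondary difficulty is that the allowed range $\rho \leq \nicefrac{5}{62L}$ is strictly smaller than for \ref{eq:EG}; this should reflect the extra Lipschitz slack paid to compensate for the single-call evaluation of $F$, and is what forces the lower bound $\gamma \geq 4\rho$ and the upper bound $\gamma \leq \nicefrac{10}{31L}$ to coexist.
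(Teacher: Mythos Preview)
Your derivation of the one-step inequality (the analogue of Lemma~\ref{lem:EG_main_lemma}) is correct and matches the paper's Lemma~\ref{lem:OG_main_lemma_appendix}. The gap is in the telescoping step for~\eqref{eq:OG_best_iterate}. Your proposed Lyapunov $\Phi_k = \|x^k-x^*\|^2 + a\|F(\tx^{k-1})\|^2$ cannot close because, as you yourself note, the Lipschitz bound on $\|F(\tx^k)-F(\tx^{k-1})\|^2$ uses $\tx^k-\tx^{k-1} = -(\gamma_1+\gamma_2)F(\tx^{k-1}) + \gamma_1 F(\tx^{k-2})$ and therefore injects an $F(\tx^{k-2})$ term that a single memory slot cannot absorb. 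The paper does not use a pointwise Lyapunov here; instead it expands $\|(\gamma_1+\gamma_2)F(\tx^{k-1}) - \gamma_1 F(\tx^{k-2})\|^2$ via polarization to obtain a \emph{recursion on the differences} themselves, sums it to reach
\[
    (1 - L^2\gamma_1(\gamma_1+\gamma_2))\sum_{k=1}^N \|F(\tx^k)-F(\tx^{k-1})\|^2 \leq L^2(\gamma_1+\gamma_2)\gamma_2\sum_{k=0}^{N-2}\|F(\tx^k)\|^2 + L^2\gamma_1^2\|F(x^0)\|^2,
\]
and only then substitutes back into the summed one-step inequality. This is where the condition $\gamma_2 \leq \nicefrac{1}{L}-\gamma_1$ actually enters. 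Also, the paper's appendix proves the bound for $\sum_k\|F(\tx^k)\|^2$, not $\sum_k\|F(x^k)\|^2$ (the main-text statement is a typo), so your planned conversion via $\|F(\tx^k)-F(x^k)\|\leq L\gamma_1\|F(\tx^{k-1})\|$ is unnecessary.

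For the last-iterate part your general ansatz is wide enough, but the paper's specific choice is worth knowing because it is what makes the argument go through. The inner potential is $\Psi_k = \|F(x^k)\|^2 + \|F(x^k)-F(\tx^{k-1})\|^2$ (a special case of your $B_k,C_k,D_k$ with fixed, $k$-independent weights), and its monotone decrease $\Psi_{k+1}\leq \Psi_k - \tfrac{1}{100}\|F(\tx^k)-F(\tx^{k-1})\|^2$ is obtained by combining \emph{three} inequalities---$\rho$-negative comonotonicity at $(x^{k+1},\tx^k)$ and at $(x^{k+1},x^k)$, plus Lipschitzness at $(x^{k+1},\tx^k)$---with weights $\tfrac{1}{\gamma},\tfrac{2}{\gamma},3$, after which the residual is controlled by a $4\times 4$ PSD inequality verified numerically. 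The outer Lyapunov is then $\Phi_k = \|x^k-x^*\|^2 + \bigl(k\,c_1 + c_2\bigr)\Psi_k$ with $c_1 = \tfrac{\gamma(\gamma-3\rho)}{2+6L^2\gamma^2}$, $c_2 = 400\gamma^2$; the $-\tfrac{1}{100}\|F(\tx^k)-F(\tx^{k-1})\|^2$ term is exactly what cancels the positive $\gamma^2\|F(\tx^k)-F(\tx^{k-1})\|^2$ coming from the one-step distance inequality. So the linear-in-$k$ coefficient multiplies $\Psi_k$, not $\|x^k-x^*\|^2$; your $A_k$ should be constant.
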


The derived best-iterate rate \eqref{eq:OG_best_iterate} for \ref{eq:OG} is not new: \citet{bohm2022solving} proves a similar result for the same range of $\rho$, though the proof that we provide differs from the proof by \citet{bohm2022solving}. Similarly to the case of \ref{eq:EG}, it is valid for any $\rho < \nicefrac{1}{2L}$. Next, the last-iterate $\cO(\nicefrac{1}{N})$ rate is recently obtained for \ref{eq:OG} by \citet{luo2022last}. It holds for any $\rho < \nicefrac{8}{(27\sqrt{6}L)}$, while the rate that we obtain is valid for any $\rho \leq \nicefrac{5}{62L}$, which is $\approx 1.33$ times larger range.

Finally, as for \ref{eq:EG}, we derive the following result about the largest possible range for $\rho$ in the case of \ref{eq:OG}.

\begin{theorem}\label{thm:OG_counter_example}
    For any $L > 0$, $\rho \geq \nicefrac{1}{2L}$, and any choice of stepsizes $\gamma_1, \gamma_2 > 0$ there exists $\rho$-negative comonotone $L$-Lipschitz operator $F$ such that \ref{eq:OG} does not necessary converges on solving \ref{eq:VIP} with this operator $F$. In particular, for $\gamma_1 > \nicefrac{1}{L}$ it is sufficient to take $F(x) = L x$, and for $0 < \gamma_1 \leq \nicefrac{1}{L}$ one can take $F(x) = L A x$, where $x \in \R^2$, $$A = \begin{pmatrix} \cos\theta & -\sin\theta \\ \sin\theta & \cos\theta \end{pmatrix},\quad \theta = \frac{2\pi}{3}.$$ 
\end{theorem}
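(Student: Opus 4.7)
The plan is to mirror the two-case split used in Theorem~\ref{thm:EG_counter_example}, since a similar dichotomy between $\gamma_1 > 1/L$ and $\gamma_1 \leq 1/L$ appears natural: for large $\gamma_1$ even the simplest linear operator $F(x) = Lx$ destabilizes the scheme, while for $\gamma_1 \leq 1/L$ one must engineer a genuinely non-monotone rotation to produce divergence. Both candidate operators are linear, so \ref{eq:OG} reduces to a linear dynamical system and divergence amounts to showing that the associated transition matrix has spectral radius strictly greater than $1$.

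For $\gamma_1 > 1/L$, take $F(x) = Lx$, which is $L$-Lipschitz and $L$-cocoercive, hence $\rho$-negatively comonotone for every $\rho \geq 0$. The problem decouples coordinatewise, so one reduces to one dimension and writes the state as $v^k = (x^k, \tilde x^{k-1})^\top$. The \ref{eq:OG} recursion is linear with a $2\times 2$ transition matrix whose characteristic polynomial computes to
\[
    \lambda^2 - \bigl(1 - (\gamma_1 + \gamma_2)L\bigr)\lambda - \gamma_1 L = 0.
\]
By Vieta's formulas, $|\lambda_1|\,|\lambda_2| = \gamma_1 L > 1$, so at least one eigenvalue lies strictly outside the unit disk and \ref{eq:OG} diverges from almost every initial point.

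For $\gamma_1 \leq 1/L$, take $F(x) = LAx$ with $A$ the $2\pi/3$-rotation. Then $F$ is $L$-Lipschitz (isometry) and
\[
    \langle F(x) - F(y), x - y\rangle = L\cos\tfrac{2\pi}{3}\|x-y\|^2 = -\tfrac{1}{2L}\|F(x) - F(y)\|^2,
\]
so $F$ is $\tfrac{1}{2L}$-negatively comonotone, hence $\rho$-negatively comonotone for every $\rho \geq 1/(2L)$. Identifying $\mathbb{R}^2$ with $\mathbb{C}$, the map $A$ acts as multiplication by $\omega = e^{i2\pi/3}$, and with $\alpha := \gamma_1 L \in (0,1]$, $\beta := \gamma_2 L > 0$, the iterates satisfy a $2\times 2$ complex linear recursion with characteristic polynomial
\[
    \lambda^2 - \bigl(1 - (\alpha + \beta)\omega\bigr)\lambda - \alpha\omega = 0.
\]
I would apply the Schur--Cohn criterion: using $\omega + \bar\omega = -1$ and $|\omega|=1$, both roots lie in the open unit disk iff $\alpha < 1$ and $\beta(\alpha^2 - \alpha + 1) < 2(1-\alpha^2)(\alpha - 1/2)$, and the task is to show this fails, hence the spectral radius exceeds $1$.

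The main obstacle is the regime $\alpha \in (1/2, 1)$ with small $\beta$, since the regimes $\alpha \leq 1/2$ (right-hand side nonpositive while left-hand side positive) and $\alpha \geq 1$ (first Schur--Cohn condition already violated) both give instability immediately. In the delicate regime my fallback would be either (i) to exploit the cube-root structure $\omega^3 = 1$ to expose a non-decaying invariant that prevents convergence from suitable initial data, in the spirit of the rotation example from Theorem~\ref{thm:PP_worst_case}, or (ii) to allow the rotation angle $\theta$ to depend on $(\gamma_1, \gamma_2)$ while keeping $\cos\theta \leq -1/2$, so that $F$ still satisfies $\rho \geq 1/(2L)$ and the characteristic polynomial becomes strictly expanding. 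Identifying the cleanest way to close this last regime is where I expect the main effort to lie.
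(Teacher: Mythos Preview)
Your Case~1 argument is correct and cleaner than the paper's: both reduce to the same $2\times 2$ transition matrix, but where the paper extracts an explicit eigenvalue and bounds it below $-1$, your one-line Vieta observation $|\lambda_1\lambda_2|=\gamma_1 L>1$ already forces a root outside the unit circle.

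For Case~2 the paper does not attempt a spectral-radius argument. It keeps the real $4\times4$ block matrix $B$, computes its \emph{operator norm} as $\|B\|^2=c+\sqrt{c^2-L^2\gamma_1^2}$ with $c=\tfrac12(L^4\gamma_1^2\gamma_2^2+L^2\gamma_1^2+L^2\gamma_2^2+L\gamma_2)+1>1$, and from $\|B\|>1$ concludes that some starting configuration is amplified. This sidesteps your Schur--Cohn case split entirely.

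Your hesitation in the regime $\alpha\in(\tfrac12,1)$ with small $\beta$ is well placed, and it is in fact a genuine obstruction for the fixed angle $\theta=2\pi/3$. At $\beta=0$ the characteristic polynomial factors as $(\lambda-1)(\lambda+\alpha\omega)$; perturbing in $\beta$ moves the root at $1$ to $1-\beta\omega/(1+\alpha\omega)+O(\beta^2)$, and since $\Re\!\bigl(\omega/(1+\alpha\omega)\bigr)=(\alpha-\tfrac12)/(1-\alpha+\alpha^2)$, this root enters the open unit disk precisely when $\alpha>\tfrac12$, while the other root stays near modulus $\alpha<1$. So for such $(\gamma_1,\gamma_2)$ the spectral radius is strictly below $1$ and \ref{eq:OG} actually converges for this particular $F$. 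Note that the paper's operator-norm computation does not detect this, because $\|B\|>1$ alone does not prevent $B^k\to 0$. Your fallback~(ii)---letting $\theta$ depend on $(\gamma_1,\gamma_2)$ while keeping $\cos\theta\le -\tfrac12$---is therefore not merely a convenience but is what a spectral-radius proof covering all $\gamma_1\le 1/L$ would actually require.
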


Note that the counter-examples are exactly the same as for \ref{eq:EG}. Moreover, since \ref{eq:OG} can be seen as an approximation of \ref{eq:PP}, this result is expected and the same has the same intuition behind as Theorem~\ref{thm:EG_counter_example}.

\section{Discussion}

In this work, we studied worst-case convergence of methods for solving \ref{eq:MI}/\ref{eq:VIP} with (star-)negative-comonotone operators, which we believe is an important first step for going beyond the very popular monotonocity assumption, that is often not satisfied in modern applications.

Namely, we study the proximal point~\eqref{eq:PP}, the extragradient~\eqref{eq:EG}, and the optimistic gradient~\eqref{eq:OG} methods. Although the basic understanding of the convergence of \ref{eq:PP} and best-iterate convergence of \ref{eq:EG} and \ref{eq:OG} is relatively complete, several open-questions about last-iterate convergence of \ref{eq:EG} and \ref{eq:OG} remain. In particular, it is unclear what is the largest possible range for $\rho$ for which one can guarantee last-iterate $\cO(\nicefrac{1}{N})$ convergence of  \ref{eq:EG}/\ref{eq:OG} under $\rho$-negative comonotonicity and $L$-Lipschitzness.

Moreover, another important direction for future research is identifying weaker assumptions allowing to prove non-asymptotic convergence rates for \ref{eq:PP}/\ref{eq:EG}/\ref{eq:OG} and at the same time allowing to have isolated optima or non-convex solution sets, as discussed in Section~\ref{sec:neg_comon}. Finally, it would be very important to extend the results to the stochastic case; see \citep{pethick2023solving} for the recent advances in this direction.

\section*{Acknowledgements}

We thank Axel B\"ohm for pointing us to the reference \citep{bohm2022solving}, which we were not aware of during the work on the paper. We also thank an anonymous reviewer for pointing us to the references \citep{bauschke2021generalized, iusem2003inexact}, which we were also not aware of during the work on the paper. The research of E.~Gorbunov was partially supported by a grant for research centers in the field of artificial intelligence, provided by the Analytical Center for the Government of the Russian Federation in accordance with the subsidy agreement (agreement identifier 000000D730321P5Q0002) and the agreement with the Moscow Institute of Physics and Technology dated November 1, 2021 No. 70-2021-00138.

\bibliography{refs}
\bibliographystyle{icml2023}

\newpage
\appendix
\onecolumn
{\small\tableofcontents}


\clearpage

\section{Missing Proofs and Details From Section~\ref{sec:neg_comon}}

\begin{lemma}[Lemma~\ref{lem:expansiveness_of_neg_comon_operator}]
    $F:\R^d \rightrightarrows \R^d$ is $\rho$-negative comonotone ($\rho\geq 0$) if and only if operator $\Id + 2\rho F$ is expansive.
\end{lemma}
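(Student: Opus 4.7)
The plan is to observe that the equivalence follows by a direct expansion of the squared norm of the difference of two images under $\mathrm{Id} + 2\rho F$, so I would proceed by a single algebraic identity and then read off each direction from it.

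First, I would fix arbitrary pairs $(x,u)$ and $(y,v)$ in the graph of $F$ (i.e., $u \in F(x)$, $v \in F(y)$); the expansiveness statement for $\mathrm{Id} + 2\rho F$ then amounts to $\|(x+2\rho u)-(y+2\rho v)\|^2 \geq \|x-y\|^2$ for every such choice. Expanding the left-hand side by the usual $\|a+b\|^2 = \|a\|^2 + 2\langle a,b\rangle + \|b\|^2$ identity gives
\begin{equation*}
\|(x+2\rho u)-(y+2\rho v)\|^2 = \|x-y\|^2 + 4\rho\langle u - v,\, x - y\rangle + 4\rho^2\|u-v\|^2.
\end{equation*}

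Next, subtracting $\|x-y\|^2$ from both sides and dividing by $4\rho$ (which is legitimate when $\rho > 0$; the case $\rho = 0$ reduces to the trivial identification of monotonicity and expansiveness of $\mathrm{Id}$), the expansiveness inequality is seen to be equivalent term-by-term to
\begin{equation*}
\langle u - v,\, x - y\rangle \;\geq\; -\rho\,\|u-v\|^2,
\end{equation*}
which is exactly the defining inequality of $\rho$-negative comonotonicity for the pairs $(x,u),(y,v)$ in the graph of $F$. Both implications then follow at once by letting $(x,u),(y,v)$ range over $G_F$.

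There is essentially no obstacle here; the only point requiring a word of care is that $F$ may be set-valued, so the computation must be carried out for arbitrary selections $u \in F(x)$, $v \in F(y)$ rather than for ``the'' value of $F$, and the conclusion must be phrased for every pair of points in the graph. If one wishes to promote this to the \emph{maximal} version (as in the main text), the same identity shows that the graphs of $\mathrm{Id} + 2\rho F$ and of expansive extensions of it correspond bijectively, so maximality of $F$ as a negative-comonotone operator is equivalent to maximal expansiveness of $\mathrm{Id} + 2\rho F$; but the bare equivalence stated in the lemma needs nothing beyond the one-line expansion above.
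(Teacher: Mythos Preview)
Your proof is correct and follows exactly the same route as the paper's: expand $\|(x+2\rho u)-(y+2\rho v)\|^2$, cancel $\|x-y\|^2$, and read off the negative-comonotonicity inequality. One small caveat: your parenthetical on $\rho=0$ is not quite right---expansiveness of $\mathrm{Id}$ is vacuously true while monotonicity of $F$ is not, so the biconditional actually degenerates there; the paper's own proof does not address this edge case either, and the lemma is only meaningful for $\rho>0$.
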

\begin{proof}
    Expansiveness of operator $\Id + 2\rho F$ means that for any $x,y \in \R^d$
    \begin{eqnarray*}
        \|x + 2\rho F(x) - y - 2\rho F(y)\|^2 \geq \|x - y\|^2,
    \end{eqnarray*}
    where $F(x)$ and $F(y)$ represent the arbitrary elements from the values of $F$ measured at $x$ and $y$, respectively. Expanding the square in the left-hand side of the above inequality, we get
    \begin{eqnarray*}
        \|x - y\|^2 + 4\rho\langle F(x) - F(y), x - y \rangle + 4\rho^2 \|F(x) - F(y)\|^2 \geq \|x-y\|^2.
    \end{eqnarray*}
    The above inequality is equivalent to $\rho$-negative comonotonicity \eqref{eq:rho_neg_comon}.
\end{proof}

\begin{theorem}[Theorem~\ref{thm:spectral_viewpoint_on_neg_comon}]
    Let $F:\R^d \to \R^d$ be a continuously differentiable. Then, the following statements are equivalent:
    \begin{itemize}
        \item $F$ is $\rho$-negative comonotone,
        \item $\Re(\nicefrac{1}{\lambda}) \geq -\rho$ for all $\lambda \in \Sp(\nabla F(x))$, $\forall x \in \R^d$.
    \end{itemize}
\end{theorem}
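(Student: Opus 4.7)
The plan is to reduce the statement to a pointwise linear condition on the Jacobian $J(x) := \nabla F(x)$, and then to apply the known spectral characterization of linear $\rho$-negative comonotonicity.

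First I would establish the equivalence between global $\rho$-negative comonotonicity of $F$ and the pointwise linear condition
\[\langle J(x) v, v \rangle + \rho \|J(x) v\|^2 \geq 0 \quad \text{for every } v, x \in \R^d.\]
The forward implication is a standard Taylor expansion: plug $y = x + t v$ into \eqref{eq:rho_neg_comon}, divide by $t^2$, and let $t \to 0^+$. For the converse, I would appeal to Lemma~\ref{lem:expansiveness_of_neg_comon_operator}: the pointwise inequality above is equivalent to $\|(\Id + 2\rho J(x))\, v\| \geq \|v\|$ for all $v$, i.e., $\sigma_{\min}(\nabla(\Id + 2\rho F)(x)) \geq 1$ everywhere. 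Hadamard's global inverse theorem on $\R^d$ then implies that $\Id + 2\rho F$ is a bijective diffeomorphism whose inverse is $1$-Lipschitz; equivalently, $\Id + 2\rho F$ is globally expansive, and so by Lemma~\ref{lem:expansiveness_of_neg_comon_operator} $F$ is $\rho$-negative comonotone.

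Second I would prove the equivalence between the pointwise linear condition on $J$ and the spectral condition $\Re(1/\lambda) \geq -\rho$ for every $\lambda \in \Sp(J)$. For the forward direction, take an eigenvalue $\lambda = a + ib$ of $J = J(x)$ with (possibly complex) eigenvector $v = u + iw$, where $u, w \in \R^d$. Using $Ju = au - bw$ and $Jw = bu + aw$, I would apply the real inequality separately to $u$ and $w$ and sum, obtaining
\[(\|u\|^2 + \|w\|^2)\bigl(a + \rho(a^2 + b^2)\bigr) \geq 0,\]
from which $\Re(1/\lambda) \geq -\rho$ reads off immediately whenever $\lambda \neq 0$. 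The converse (spectral $\Rightarrow$ quadratic form) is the delicate step and can be invoked from the linear characterization of \citep[Proposition~5.1]{bauschke2021generalized}.

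The main obstacle is precisely this last implication. For normal $J$ it is immediate (the problem decouples along an orthonormal eigenbasis), but for a generic non-normal Jacobian the spectrum alone need not control the quadratic form; the referenced linear result handles this via a careful block Jordan-type reduction. A minor but important bookkeeping point is the convention for $\lambda = 0$: since the forbidden region is an \emph{open} disc passing through the origin, $\lambda = 0$ is always admissible, and the Taylor-expansion argument naturally produces this boundary behaviour in the forward direction.
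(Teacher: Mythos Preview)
Your route is more explicit than the paper's. The paper's proof is a short reduction: it invokes Lemma~\ref{lem:expansiveness_of_neg_comon_operator} to replace $\rho$-negative comonotonicity by expansiveness of $G:=\Id+2\rho F$, then \emph{asserts} without argument that expansiveness of a smooth map $G$ is equivalent to $|\mu|\ge 1$ for every $\mu\in\Sp(\nabla G(x))$ and every $x$, and finally rewrites $|1+2\rho\lambda|\ge 1$ as $\Re(1/\lambda)\ge -\rho$. It never isolates your pointwise quadratic condition, never uses Hadamard, and never carries out the eigenvector computation. Your first equivalence (global $\Leftrightarrow$ pointwise Jacobian inequality) via Taylor expansion and Hadamard's global inverse theorem is sound and genuinely fills in a step the paper skips: the paper's shortcut through expansiveness makes the global-to-local direction immediate but leaves the local-to-global direction exactly as unjustified as the step your Hadamard argument resolves.

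There is, however, a genuine gap that your proposal and the paper share: the implication ``eigenvalue condition $\Rightarrow$ pointwise quadratic form'' (equivalently, $\sigma_{\min}(I+2\rho J)\ge 1$) fails for non-normal $J$. Take $J=\bigl(\begin{smallmatrix}0&1\\0&0\end{smallmatrix}\bigr)$. Its only eigenvalue is $0$, which lies on the boundary of the admissible disc and satisfies the paper's own reformulation $|1+2\rho\lambda|\ge 1$; yet $\langle Jv,v\rangle+\rho\|Jv\|^2=v_1v_2+\rho v_2^2$ is unbounded below (fix $v_2=1$, send $v_1\to-\infty$), so $F(x)=Jx$ is not $\rho$-negative comonotone for any finite $\rho$. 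You rightly flag this direction as ``the delicate step,'' but deferring to the cited linear result does not dispose of this counterexample, and neither does the paper's bare assertion. As stated, both arguments need an extra hypothesis (for instance, normality of $\nabla F(x)$, so that the spectrum controls the numerical range) or a stronger condition than a pure eigenvalue restriction; the equivalence as written does not appear to hold in general.
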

\begin{proof}
    For a complex number $\lambda$ condition $\Re(\nicefrac{1}{\lambda}) \geq -\rho$ is equivalent to $|\lambda + \nicefrac{1}{2\rho}| \geq \nicefrac{1}{2\rho}$. Indeed, for $\lambda = \lambda_1 + i\lambda_2$, $\lambda_1,\lambda_2 \in \CC$ we have
    \begin{equation}
        \Re\left(\frac{1}{\lambda}\right) = \frac{\lambda_1}{\lambda_1^2 + \lambda_2^2} \geq -\rho\quad \Longleftrightarrow \quad \lambda_1^2 + \lambda_2^2 + \frac{\lambda_1}{\rho} \geq 0 \quad \Longleftrightarrow \quad \left|\lambda + \frac{1}{2\rho} \right| \geq \frac{1}{2\rho}, \label{eq:csjjbhscjbdh}
    \end{equation}
    i.e., $\Re(\nicefrac{1}{\lambda}) \geq -\rho$ means that $\lambda$ lies in the disc in $\CC$ centered at $(-\nicefrac{1}{2\rho})$ and radius $\nicefrac{1}{2\rho}$. From the other side, $\rho$-negative comonotonicity of $F$ is equivalent to expansiveness of $\Id + 2\rho F$ (Lemma~\ref{lem:expansiveness_of_neg_comon_operator}), which is equivalent to $|\lambda| \geq 1$ for any $\lambda \in \Sp(I + 2\rho\nabla F(x))$ and for any $x\in\R^d$. Since
    \begin{equation}
        \Sp(I + 2\rho\nabla F(x)) = \left\{1 + 2\rho\lambda\mid \lambda \in \Sp(\nabla F(x))\right\}, \notag
    \end{equation}
    we get that $|1 + 2\rho \lambda| \geq 1$ for any $\lambda \in \Sp(\nabla F(x))$ and any $x\in \R^d$. Taking into account \eqref{eq:csjjbhscjbdh}, we obtain the desired result.
\end{proof}

\newpage

\section{Missing Details on PEP From Section~\ref{sec:prox_point}}\label{appendix:PEP}

\paragraph{On PEP formulation \eqref{eq:PEP_for_PP}.} To find the tight convergence rate of \ref{eq:PP} and build worst-case examples of $\rho$-negative comonotone operators for \ref{eq:PP}, we consider problem \eqref{eq:PEP_for_PP}, which we restate below for convenience:
\begin{eqnarray}
    \max\limits_{F, d, x^0}&&\|x^N - x^{N-1}\|^2 \label{eq:PEP_for_PP_appendix}\\
    \text{s.t.}&& F:\R^d \rightrightarrows \R^d \text{ is $\rho$-negative comonotone},\notag\\
    &&\|x^0 - x^*\|^2 \leq R^2,\; 0 \in  F(x^*),\notag\\
    && x^{k+1} = x^k - \gamma F(x^{k+1}),\quad  k=0,1,\ldots,N-1.\notag
\end{eqnarray}
The above problem requires maximization over \emph{infinitely-dimensional} space of $\rho$-negative comonotone operators. To solve such a problem numerically, one can properly reformulate it to a \emph{finite-dimensional} one. Whereas PEPs were introduced by~\citet{drori2012performance}, thie reformulation technique was provided in~\citet{taylor2017smooth,2017taylor} in the context of optimization problems, and was extended to problems involving (monotone) operators in~\citet{ryu2020operator}. In particular, instead of \eqref{eq:PEP_for_PP_appendix}, one can consider an equivalent finite-dimensional problem
\begin{eqnarray}
    \max\limits_{\substack{d\\x^*,x^0, x^1, \ldots, x^N \in \R^d\\ g^*,g^0, g^1, \ldots, g^N \in \R^d}}&&\|x^N - x^{N-1}\|^2 \label{eq:PEP_for_PP_appendix_1}\\
    \text{s.t.}&& F:\R^d \rightrightarrows \R^d \text{ is $\rho$-negative comonotone}, \label{eq:non_trivial_constraint_1}\\
    && g^k \in F(x^k),\quad  k=*,0,1,\ldots,N,\quad g^* = 0, \label{eq:non_trivial_constraint_2}\\
    &&\|x^0 - x^*\|^2 \leq R^2,\notag\\
    && x^{k+1} = x^k - \gamma g^{k+1},\quad  k=0,1,\ldots,N-1.\notag
\end{eqnarray}
Although the above problem is finite-dimensional, it has non-trivial constraints~\eqref{eq:non_trivial_constraint_1}-\eqref{eq:non_trivial_constraint_2}, which can be handled via the following result.

\begin{theorem}\label{thm:tho_comon_interpolation}
    Let $\{(x^k, g^k)\}_{k=0}^N \subseteq \R^d \times \R^d$ be some finite set of pairs of points in $\R^d$. There exists a maximal $\rho$-negative comonotone operator $F:\R^d \rightrightarrows \R^d$ such that $g^k \in F(x^k)$, $k = 0,\ldots, N$ if and only if
    \begin{equation}
        \langle g^i - g^j, x^i - x^j \rangle \geq -\rho\|g^i - g^j\|^2\quad \forall i,j =  0,\ldots, N.\label{eq:interpolation_conditions} 
    \end{equation}
\end{theorem}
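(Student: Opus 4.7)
The plan is to prove necessity trivially from the definition, and then to reduce the sufficiency (interpolation/extension) direction to the classical interpolation theorem for maximal monotone operators by exploiting the correspondence between $\rho$-negative comonotonicity and monotonicity of $F^{-1}+\rho\,\Id$ that was already invoked in the proof of Theorem~\ref{thm:no_isolated_minima}.

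For necessity, if a maximal $\rho$-negative comonotone $F$ with $g^k\in F(x^k)$ exists, then condition \eqref{eq:interpolation_conditions} is nothing but \eqref{eq:rho_neg_comon} applied to every pair $(x^i,g^i)$, $(x^j,g^j)$, which is immediate.

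For sufficiency, the key observation is that \eqref{eq:interpolation_conditions} is equivalent to
\[
    \langle g^i-g^j,\, (x^i+\rho g^i)-(x^j+\rho g^j)\rangle \;\geq\; 0 \quad \forall i,j,
\]
i.e.\ the pairs $(a^k,b^k):=(g^k,\, x^k+\rho g^k)$ satisfy the pairwise monotonicity inequality. I would then invoke the classical monotone-operator interpolation/extension theorem (Minty/Rockafellar-type, see also the operator-PEP formulation of \citet{ryu2020operator}): any finite monotone collection $\{(a^k,b^k)\}$ can be extended to a maximal monotone operator $\tilde G:\R^d\rightrightarrows \R^d$ with $b^k\in\tilde G(a^k)$. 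Given such $\tilde G$, I would define
\[
    F \;:=\; (\tilde G - \rho\,\Id)^{-1},
\]
and verify the two required properties: (i) $F$ interpolates the data, because $b^k\in\tilde G(a^k)$ rewrites as $x^k\in(\tilde G-\rho\,\Id)(g^k)$, i.e.\ $g^k\in F(x^k)$; (ii) $F$ is maximally $\rho$-negative comonotone, because $F^{-1}+\rho\,\Id = \tilde G$ is maximal monotone, which is equivalent to maximal $\rho$-negative comonotonicity of $F$ (exactly the equivalence already used in the proof of Theorem~\ref{thm:no_isolated_minima}, and consistent with Lemma~\ref{lem:expansiveness_of_neg_comon_operator}).

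The main obstacle I expect is the careful handling of maximality, rather than just interpolation: one must be sure that the chosen monotone extension $\tilde G$ is itself \emph{maximal}, and that passing through the shift $-\rho\,\Id$ and the inversion preserves maximality so that the resulting $F$ is genuinely a \emph{maximal} $\rho$-negative comonotone operator and not merely a $\rho$-negative comonotone one. This is standard — maximality is preserved under the affine operation $G\mapsto G-\rho\,\Id$, and under full-domain inversion of set-valued operators — but it is the point where one has to be pedantic. Everything else (domain issues, the fact that the extension can be chosen full-domain by Minty's theorem on $\R^d$, and the verification of \eqref{eq:interpolation_conditions}) follows directly from the algebraic equivalence displayed above.
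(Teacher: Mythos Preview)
Your proposal is correct and matches the paper's proof essentially line for line: both rewrite \eqref{eq:interpolation_conditions} as monotonicity of the pairs $(g^k,\,x^k+\rho g^k)$, invoke the maximal monotone extension/interpolation theorem, and then recover $F$ as $(\tilde G-\rho\,\Id)^{-1}$ via the equivalence ``$F$ is maximally $\rho$-negative comonotone $\Leftrightarrow$ $F^{-1}+\rho\,\Id$ is maximal monotone.'' Your explicit remark about maximality being preserved under the shift and inversion is, if anything, slightly more careful than the paper, which simply asserts this equivalence.
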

\begin{proof}
    Following~\citet{ryu2020operator}, we say that the set $\{(x^k, g^k)\}_{k=0}^N \subseteq \R^d \times \R^d$ is $\cM$-interpolable if there exists a maximal monotone ($0$-negative comonotone) operator $F:\R^d \rightrightarrows \R^d$ such that $g^k \in F(x^k)$, $k = 0,\ldots, N$. One can introduce a similar notion for $\rho$-negative comonotone case, i.e., we say that the set $\{(x^k, g^k)\}_{k=0}^N \subseteq \R^d \times \R^d$ is $\cN\cM_\rho$-interpolable if there exists a maximal $\rho$-negative comonotone operator $F:\R^d \rightrightarrows \R^d$ such that $g^k \in F(x^k)$, $k = 0,\ldots, N$. Next, for convenience we denote the classes of maximal monotone and maximal $\rho$-negative comonotone operators as $\cM$ and $\cN\cM_\rho$ respectively. Then, in view of the maximal monotone extension theorem~\citep[Theorem 20.21]{bauschke2011convex}, the set $\{(x^k, g^k)\}_{k=0}^N \subseteq \R^d \times \R^d$ is $\cM$-interpolable if and only if $\langle g^i - g^j, x^i - x^j \rangle \geq 0$ for any $i,j =  0,\ldots, N$. 

    For obtaining the desired result, we simply reduce the problem of finding a maximal $\rho$-negative comonotone interpolating operator for the set $\{(x^k,g^k)\}_{k=0}^N$ as that of finding a maximal monotone operator interpolating $\{(x^k+\rho g^k, g^k)\}_{k=0}^N$, which is a consequence of the following equivalence: an operator $F:\mathbb{R}^d\rightrightarrows \mathbb{R}$ is maximal $\rho$-negatively monotone if and only if $F^{-1}+\rho \Id$ is maximal monotone. More precisely, the reasoning is as follows:
    \begin{eqnarray*}
        \langle g^i - g^j, x^i - x^j \rangle &\geq &-\rho\|g^i - g^j\|^2 \quad \forall i,j =  0,\ldots, N\\
        & \Longleftrightarrow &\; \langle g^i - g^j, x^i + \rho g^i - (x^j + \rho g^j) \rangle \geq 0 \quad \forall i,j =  0,\ldots, N\\
        & \Longleftrightarrow& \; \exists\; T \in \cM:\; x^i + \rho g^i \in T(g^i) \quad \forall i =  0,\ldots, N\\
        & \overset{Q = T - \rho\Id}{\Longleftrightarrow}& \; \exists\; Q:\; Q + \rho \Id \in \cM \text{ and } x^i \in Q(g^i) \quad \forall i =  0,\ldots, N\\
        & \overset{F = Q^{-1}}{\Longleftrightarrow}& \; \exists\; F \in \cN\cM_{\rho} \text{ and } g^i \in F(x^i) \quad \forall i =  0,\ldots, N,
    \end{eqnarray*}
    where the last equivalence follows from the following fact: $F$ is $\rho$-negative comonotone if and only if $F^{-1} + \rho\Id$ is monotone, thereby concluding the proof.
\end{proof}

In view of the above theorem, one can replace \eqref{eq:non_trivial_constraint_1}-\eqref{eq:non_trivial_constraint_2} constraints by $(N+1)(N+2)$ inequalities of the type \eqref{eq:interpolation_conditions} and get the following finite-dimensional problem, which is equivalent to \eqref{eq:PEP_for_PP_appendix_1}:
\begin{eqnarray}
    \max\limits_{\substack{d\\x^*,x^0, x^1, \ldots, x^N \in \R^d\\ g^*,g^0, g^1, \ldots, g^N \in \R^d}}&&\|x^N - x^{N-1}\|^2 \label{eq:PEP_for_PP_appendix_2}\\
    \text{s.t.}&& \langle g^i - g^j, x^i - x^j \rangle \geq -\rho\|g^i - g^j\|^2,\quad  i,j=*,0,1,\ldots,N,\quad g^* = 0, \label{eq:simple_constraints}\\
    &&\|x^0 - x^*\|^2 \leq R^2,\label{eq:init_constraint}\\
    && x^{k+1} = x^k - \gamma g^{k+1},\quad  k=0,1,\ldots,N-1.\notag
\end{eqnarray}
We notice that $x^1,\ldots, x^N$ are linear combinations of $x^0, g^0, g^1, \ldots, g^N$ and we also have constraint $g^* = 0$. Therefore, one can reduce the number of maximization vector-variables to $N+3$: $x^*, x^0, g^0, g^1, \ldots, g^N$. Moreover, the above problem is linear w.r.t.\ the inner products of all possible pairs of vectors $x^*, x^0, g^0, g^1, \ldots, g^N$. This means that \eqref{eq:PEP_for_PP_appendix_2} is linear w.r.t.\ the elements of matrix $G = V^\top V$, where $V = (x^*, x^0, g^0, g^1, \ldots, g^N)$, and one can reformulate the problem \eqref{eq:PEP_for_PP_appendix_2} as the following semidefinite programming (SDP)
\begin{eqnarray}
    \max\limits_{G \in \mathbb{S}_{+}^{N+3}}&& \Tr(M_0 G) \label{eq:PEP_for_PP_SDP}\\
    \text{s.t.}&& \Tr(M_i G) \leq 0,\quad  i=1,2\ldots,(N+2)(N+3), \notag\\
    && \Tr(M_{-1} G) \leq R^2.\notag
\end{eqnarray}
Here $\mathbb{S}_{+}^{N+3}$ denotes the set of symmetric positive semidefinite matrices of size $(N+3)\times(N+3)$ and matrices $M_0$, $\{M_i\}_{i=1}^{(N+2)(N+3)}$, and $M_{-1}$ encode the objective \eqref{eq:PEP_for_PP_appendix_2} and constraints \eqref{eq:simple_constraints}-\eqref{eq:init_constraint}, respectively. We do not provide the exact formulas for these matrices and refer to the examples of how they can be constructed provided in \citep{ryu2020operator, gorbunov2021extragradient}. We note that in toolboxes like PESTO \citep{taylor2017performance} and PEPit \citep{goujaud2022pepit}, the process of constructing matrices $M_0$, $\{M_i\}_{i=1}^{(N+2)(N+3)}$, $M_{-1}$ is fully automated.

\paragraph{On low-dimensional worst-case examples.} It is worth mentioning that for any $G \in \mathbb{S}_{+}^{N+3}$ one can reconstruct vectors $x^*, x^0, g^0, g^1, \ldots, g^N \in \R^{N+3}$ such that $G$ is their Gram matrix, i.e., find $V = (x^*, x^0, g^0, g^1, \ldots, g^N) \in \R^{(N+3)\times (N+3)}$ such that $G = V^\top V$. More precisely, if $\mathrm{rank}(G) = r \leq N+3$, then one can find $x^*, x^0, g^0, g^1, \ldots, g^N \in \R^{r}$ such that $G$ is the Gram matrix of this set of vectors.

Therefore, to obtain low-dimensional worst-case trajectories like ones illustrated in Figure~\ref{fig:1b}, we need to find low-rank solution of \eqref{eq:PEP_for_PP_SDP}. To do so, we apply \emph{trace heuristic} \citep{taylor2017performance}, where we first find numerically an approximate optimal value $v_*$ of problem \eqref{eq:PEP_for_PP_SDP} and then solve the following problem:
\begin{eqnarray}
    \min\limits_{G \in \mathbb{S}_{+}^{N+3}}&& \Tr(G)  \label{eq:PEP_for_PP_SDP_trace_heuristic}\\
    \text{s.t.}&& \Tr(M_i G)\leq 0,\quad  i=1,2\ldots,(N+2)(N+3), \notag\\
    && \Tr(M_{-1} G) \leq R^2,\notag\\
    && \Tr(M_0 G) = v_*. \label{eq:new_constr}
\end{eqnarray}
Constraint \eqref{eq:new_constr} enforces that by solving the above problem we find numerically an approximate solution for \eqref{eq:PEP_for_PP_SDP} of a comparable quality and minimization of $\Tr(G)$ can be seen as an ``approximate minimization'' of $\mathrm{rank}(G)$.

\newpage

\section{Missing Proofs and Details From Section~\ref{sec:eg_peg}}

This appendix provides the complete proofs of the results of~\ref{eq:EG} and~\ref{eq:OG}.

\subsection{Extragradient method}\label{appendix:eg}

\subsubsection{Guarantees for the averaged squared norm of the operator}

Theorem~\ref{thm:EG_convergence} consists of the two results: one requires only star negative comonotonicity and gives the rate in terms of the averaged squared norms of the operator along the trajectory and the other one requires negative comonotonicity but gives last-iterate convergence guarantee. We start with the first result which is a simplification of Theorem 3.1 from \citet{pethick2022escaping}. Our proof is a bit more explicit in terms of why we need $\gamma_1$ to be large, because it relies on the following lemma.
\begin{lemma}\label{lem:EG_main_lemma_appendix}
    Let $F$ be $L$-Lipschitz and $\rho$-star-negative comonotone. Then, for any $k\geq 0$ the iterates produced by \ref{eq:EG} after $k\geq 0$ iterations satisfy
    \begin{equation}
        \|x^{k+1} - x^*\|^2 \leq \|x^k - x^*\|^2 - \gamma_2\left(\gamma_1 - 2\rho - \gamma_2\right)\|F(\tx^k)\|^2 - \gamma_1\gamma_2(1 - L^2\gamma_1^2)\|F(x^k)\|^2. \label{eq:EG_key_inequality}
    \end{equation}
\end{lemma}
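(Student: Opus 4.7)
The strategy is exactly the one sketched in the statement of Lemma~\ref{lem:EG_main_lemma} in the main text: expand $\|x^{k+1}-x^*\|^2$, re-center the first-order term at $\tx^k$ (so that star-negative comonotonicity can be applied), and then control the remaining cross term $\langle F(x^k),F(\tx^k)\rangle$ via the polarization identity combined with $L$-Lipschitzness.

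\textbf{Step 1 (expansion).} I would start from the update $x^{k+1}=x^k-\gamma_2 F(\tx^k)$ and write
\begin{equation*}
\|x^{k+1}-x^*\|^2 = \|x^k-x^*\|^2 - 2\gamma_2\langle x^k-x^*, F(\tx^k)\rangle + \gamma_2^2 \|F(\tx^k)\|^2.
\end{equation*}
The next move is the key decomposition: using $\tx^k = x^k-\gamma_1 F(x^k)$, I rewrite $x^k-x^* = (\tx^k-x^*)+\gamma_1 F(x^k)$, which splits the inner product into a ``residual'' piece at $\tx^k$ and a cross piece between $F(x^k)$ and $F(\tx^k)$:
\begin{equation*}
-2\gamma_2 \langle x^k-x^*, F(\tx^k)\rangle = -2\gamma_2\langle \tx^k-x^*, F(\tx^k)\rangle - 2\gamma_1\gamma_2\langle F(x^k), F(\tx^k)\rangle.
\end{equation*}

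\textbf{Step 2 (apply the two assumptions).} Star-negative comonotonicity \eqref{eq:rho_star_neg_comon} applied at $\tx^k$ yields $\langle F(\tx^k),\tx^k-x^*\rangle \geq -\rho\|F(\tx^k)\|^2$, hence
\begin{equation*}
-2\gamma_2\langle \tx^k-x^*, F(\tx^k)\rangle \leq 2\rho\gamma_2\|F(\tx^k)\|^2.
\end{equation*}
For the cross term I use the polarization identity $2\langle a,b\rangle = \|a\|^2+\|b\|^2-\|a-b\|^2$ to get
\begin{equation*}
-2\gamma_1\gamma_2\langle F(x^k), F(\tx^k)\rangle = -\gamma_1\gamma_2\bigl(\|F(x^k)\|^2+\|F(\tx^k)\|^2-\|F(x^k)-F(\tx^k)\|^2\bigr),
\end{equation*}
and then $L$-Lipschitzness combined with $\|x^k-\tx^k\|=\gamma_1\|F(x^k)\|$ gives $\|F(x^k)-F(\tx^k)\|^2 \leq L^2\gamma_1^2\|F(x^k)\|^2$.

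\textbf{Step 3 (collect coefficients).} Plugging the two bounds back into the expansion and collecting the coefficients of $\|F(\tx^k)\|^2$ and $\|F(x^k)\|^2$ yields, respectively, $(\gamma_2^2+2\rho\gamma_2-\gamma_1\gamma_2) = -\gamma_2(\gamma_1-2\rho-\gamma_2)$ and $-\gamma_1\gamma_2(1-L^2\gamma_1^2)$, which matches \eqref{eq:EG_key_inequality} exactly. There is no real obstacle here: the proof is a direct calculation, with the only mildly non-obvious ingredient being the decomposition of $x^k-x^*$ through $\tx^k$ in Step~1, which is precisely what makes the star-negative comonotonicity assumption (anchored at $\tx^k$) usable in the analysis of an explicit method.
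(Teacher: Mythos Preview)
Your proposal is correct and follows essentially the same approach as the paper's proof: expand $\|x^{k+1}-x^*\|^2$, re-center the linear term at $\tx^k$, apply star-negative comonotonicity there, and handle the cross term $\langle F(x^k),F(\tx^k)\rangle$ via the polarization identity together with $L$-Lipschitzness and $x^k-\tx^k=\gamma_1 F(x^k)$. The steps and the final coefficient bookkeeping coincide with the paper's argument.
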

\begin{proof}
    By the definition of $x^{k+1}$ and $\tx^k$ we have
    \begin{eqnarray*}
        \|x^{k+1} - x^*\|^2 &=& \|x^k - x^*\|^2 - 2\gamma_2 \langle x^k - x^*, F(\tx^k) \rangle + \gamma_2^2 \|F(\tx^k)\|^2\\
        &=& \|x^k - x^*\|^2 - 2\gamma_2 \langle \tx^k - x^*, F(\tx^k) \rangle - 2\gamma_1\gamma_2 \langle F(x^k), F(\tx^k) \rangle + \gamma_2^2 \|F(\tx^k)\|^2.
    \end{eqnarray*}
    Next, we estimate the second term in the right-hand side using star-negative comonotonicity:
    \begin{eqnarray*}
        \|x^{k+1} - x^*\|^2 &\overset{\eqref{eq:rho_star_neg_comon}}{\leq}& \|x^k - x^*\|^2 + \gamma_2\left(2\rho + \gamma_2\right)\|F(\tx^k)\|^2 - 2\gamma_1\gamma_2 \langle F(x^k), F(\tx^k) \rangle.
    \end{eqnarray*}
    Finally, we handle the last term in the right-hand side of the above inequality using $2\langle a,b \rangle = \|a\|^2 + \|b\|^2 - \|a - b\|^2$, which holds for any $a,b \in \R^d$, and then applying $L$-Lipschitzness of $F$:
    \begin{eqnarray}
        \|x^{k+1} - x^*\|^2 &\leq& \|x^k - x^*\|^2 - \gamma_2\left(\gamma_1 - 2\rho - \gamma_2\right)\|F(\tx^k)\|^2 - \gamma_1\gamma_2\|F(x^k)\|^2\notag\\
        &&\quad + \gamma_1\gamma_2\|F(x^k) - F(\tx^k)\|^2 \notag\\
        &\overset{\eqref{eq:Lipschitzness}}{\leq}& \|x^k - x^*\|^2 - \gamma_2\left(\gamma_1 - 2\rho - \gamma_2\right)\|F(\tx^k)\|^2 - \gamma_1\gamma_2\|F(x^k)\|^2\notag\\
        &&\quad + \gamma_1\gamma_2L^2\|x^k - \tx^k\|^2. \notag
    \end{eqnarray}
    Taking into account $x^k - \tx^k = \gamma_1 F(x^k)$ and rearranging the terms, we get the result.
\end{proof}

This lemma implies the first part of Theorem~\ref{thm:EG_convergence} and even a bit more.
\begin{theorem}[First part of Theorem~\ref{thm:EG_convergence}]
    Let $F$ be $L$-Lipschitz and $\rho$-star-negative comonotone with $\rho < \nicefrac{1}{2L}$. If $2\rho < \gamma_1 < \nicefrac{1}{L}$ and $0 < \gamma_2 \leq \gamma_1 - 2\rho$, then the iterates produced by \ref{eq:EG} after $N\geq 0$ iteration satisfy
    \begin{equation}
        \frac{1}{N+1}\sum\limits_{k=0}^N \|F(x^k)\|^2 \leq \frac{\|x^0 - x^*\|^2}{\gamma_1\gamma_2(1 - L^2\gamma_1^2)(N+1)}. \label{eq:EG_best_iterate_appendix_1}
    \end{equation}
    If $2\rho < \gamma_1 \leq \nicefrac{1}{L}$ and $0 < \gamma_2 < \gamma_1 - 2\rho$, then the iterates produced by \ref{eq:EG} after $N\geq 0$ iteration satisfy
    \begin{equation}
        \frac{1}{N+1}\sum\limits_{k=0}^N \|F(\tx^k)\|^2 \leq \frac{\|x^0 - x^*\|^2}{\gamma_2(\gamma_1 - 2\rho - \gamma_2)(N+1)}. \label{eq:EG_best_iterate_appendix_2}
    \end{equation}
\end{theorem}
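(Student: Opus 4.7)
The plan is to deduce both bounds as essentially direct corollaries of Lemma~\ref{lem:EG_main_lemma_appendix} by telescoping. That lemma gives, for every $k\geq 0$,
\begin{equation*}
    \|x^{k+1}-x^*\|^2 \leq \|x^k-x^*\|^2 - \gamma_2(\gamma_1-2\rho-\gamma_2)\|F(\tx^k)\|^2 - \gamma_1\gamma_2(1-L^2\gamma_1^2)\|F(x^k)\|^2,
\end{equation*}
so the whole proof reduces to checking that both coefficients are nonnegative in each of the two parameter regimes and then dropping one term while keeping the other.

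For~\eqref{eq:EG_best_iterate_appendix_1}, I would observe that the condition $\gamma_2\leq \gamma_1-2\rho$ makes $\gamma_2(\gamma_1-2\rho-\gamma_2)\geq 0$, so the $\|F(\tx^k)\|^2$ term can be discarded; meanwhile $\gamma_1<1/L$ makes $\gamma_1\gamma_2(1-L^2\gamma_1^2)>0$ strictly, so it can be used as a descent coefficient. Summing the resulting inequality for $k=0,\dots,N$ telescopes the left-hand side to $\|x^{N+1}-x^*\|^2-\|x^0-x^*\|^2$, yielding
\begin{equation*}
    \gamma_1\gamma_2(1-L^2\gamma_1^2)\sum_{k=0}^{N}\|F(x^k)\|^2 \;\leq\; \|x^0-x^*\|^2 - \|x^{N+1}-x^*\|^2 \;\leq\; \|x^0-x^*\|^2,
\end{equation*}
and dividing by $(N+1)\gamma_1\gamma_2(1-L^2\gamma_1^2)$ gives~\eqref{eq:EG_best_iterate_appendix_1}.

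For~\eqref{eq:EG_best_iterate_appendix_2}, the roles swap. Here $\gamma_1\leq 1/L$ only ensures $\gamma_1\gamma_2(1-L^2\gamma_1^2)\geq 0$ (weakly), whereas the strict inequality $\gamma_2<\gamma_1-2\rho$ ensures $\gamma_2(\gamma_1-2\rho-\gamma_2)>0$. Dropping the $\|F(x^k)\|^2$ term and telescoping the resulting inequality exactly as above produces
\begin{equation*}
    \gamma_2(\gamma_1-2\rho-\gamma_2)\sum_{k=0}^{N}\|F(\tx^k)\|^2 \;\leq\; \|x^0-x^*\|^2,
\end{equation*}
and dividing by $(N+1)\gamma_2(\gamma_1-2\rho-\gamma_2)$ gives~\eqref{eq:EG_best_iterate_appendix_2}. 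There is really no obstacle: the only nontrivial step is Lemma~\ref{lem:EG_main_lemma_appendix} itself, and once it is in hand the two bounds are a two-line telescoping each, with the only care being to select the regime of $(\gamma_1,\gamma_2)$ in which the coefficient one wants to keep is strictly positive and the other is nonnegative so that it can be thrown away.
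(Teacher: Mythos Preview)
Your proposal is correct and follows essentially the same approach as the paper: both invoke Lemma~\ref{lem:EG_main_lemma_appendix}, check the signs of the two coefficients in each parameter regime, drop the nonnegative term, telescope over $k=0,\dots,N$, and divide by the surviving strictly positive coefficient times $(N+1)$. The argument and its organization are effectively identical.
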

\begin{proof}
    First, we consider the case when $2\rho < \gamma_1 < \nicefrac{1}{L}$ and $0 < \gamma_2 \leq \gamma_1 - 2\rho$. In this case, $\gamma_2(\gamma_1 - 2\rho - \gamma_2) \geq 0$ and $\gamma_1\gamma_2(1 - L^2\gamma_1^2) > 0$. Therefore, Lemma~\ref{lem:EG_main_lemma} implies
    \begin{equation*}
        \gamma_1\gamma_2(1 - L^2 \gamma_1^2)\|F(x^k)\|^2 \leq \|x^k - x^*\|^2 - \|x^{k+1} - x^*\|^2.
    \end{equation*}
    Summing up the above inequality for $k = 0,\ldots, N$, dividing the result by $\gamma_1\gamma_2(1 - L^2 \gamma_1^2)(N+1)$, and using $-\|x^{N+1} - x^*\|^2 \leq 0$, we get \eqref{eq:EG_best_iterate_appendix_1}.
    
    Next, we consider the case when $2\rho < \gamma_1 \leq \nicefrac{1}{L}$ and $0 < \gamma_2 < \gamma_1 - 2\rho$. In this case, $\gamma_2(\gamma_1 - 2\rho - \gamma_2) > 0$ and $\gamma_1\gamma_2(1 - L^2\gamma_1^2) \geq 0$. Therefore, Lemma~\ref{lem:EG_main_lemma} implies
    \begin{equation*}
        \gamma_2(\gamma_1 - 2\rho - \gamma_2)\|F(\tx^k)\|^2 \leq \|x^k - x^*\|^2 - \|x^{k+1} - x^*\|^2.
    \end{equation*}
    Summing up the above inequality for $k = 0,\ldots, N$, dividing the result by $\gamma_2(\gamma_1 - 2\rho - \gamma_2)(N+1)$, and using $-\|x^{N+1} - x^*\|^2 \leq 0$, we get \eqref{eq:EG_best_iterate_appendix_2}.
\end{proof}

\subsubsection{Last-iterate guarantees}

We start with the following lemma:
\begin{lemma}\label{lem:EG_norm_decreases}
    Let $F$ be $L$-Lipschitz and $\rho$-negative comonotone. Then for any $k \geq 0$ the iterates produced by \ref{eq:EG} with $\gamma_1 = \gamma_2 = \gamma > 0$ satisfy
    \begin{eqnarray}
        \|F(x^{k+1})\|^2 &\leq& \|F(x^k)\|^2 - \left(\frac{1}{2} - 2L^2\gamma^2\right)\|F(\tx^k) - F(x^k)\|^2 \notag\\
        &&\quad - \left(\frac{1}{2} - \frac{\rho}{\gamma}\right)\|F(\tx^k) - F(x^{k+1})\|^2 - \left(\frac{1}{2} - \frac{2\rho}{\gamma}\right)\|F(x^k) - F(x^{k+1})\|^2. \label{eq:EG_norm_inequality}
    \end{eqnarray}
    If additionally $\gamma \leq \nicefrac{1}{2L}$ and $\gamma \geq 4\rho$, then we have $\|F(x^{k+1})\| \leq \|F(x^k)\|$.
\end{lemma}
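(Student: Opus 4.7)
To prove \eqref{eq:EG_norm_inequality}, I will set $a := F(x^k)$, $b := F(\tx^k)$, $c := F(x^{k+1})$ and collect every inequality at my disposal. Using $x^k - \tx^k = \gamma a$, $\tx^k - x^{k+1} = \gamma(b-a)$, and $x^k - x^{k+1} = \gamma b$ from the update rule, $\rho$-negative comonotonicity applied at the three pairs $(x^k,\tx^k)$, $(\tx^k,x^{k+1})$, $(x^k,x^{k+1})$ converts into
\begin{align*}
I_1 &:= \gamma\langle a-b,a\rangle + \rho\|a-b\|^2 \geq 0,\\
I_2 &:= \gamma\langle b-c,b-a\rangle + \rho\|b-c\|^2 \geq 0,\\
I_3 &:= \gamma\langle a-c,b\rangle + \rho\|a-c\|^2 \geq 0,
\end{align*}
while $L$-Lipschitzness applied at $(\tx^k,x^{k+1})$ provides the further non-negative quantity $I_4 := L^2\gamma^2\|a-b\|^2 - \|b-c\|^2 \geq 0$.

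The plan is then to exhibit non-negative multipliers $\lambda_1,\lambda_2,\lambda_3,\lambda_4$ such that $\sum_{i=1}^4 \lambda_i I_i$ equals identically
\begin{equation*}
\|a\|^2 - \|c\|^2 - \Bigl(\tfrac{1}{2}-2L^2\gamma^2\Bigr)\|a-b\|^2 - \Bigl(\tfrac{1}{2}-\tfrac{\rho}{\gamma}\Bigr)\|b-c\|^2 - \Bigl(\tfrac{1}{2}-\tfrac{2\rho}{\gamma}\Bigr)\|a-c\|^2.
\end{equation*}
A natural starting point that motivates the form of the multipliers is the elementary identity $\|c\|^2 - \|a\|^2 = 2\langle b,c-a\rangle + \|c-b\|^2 - \|a-b\|^2$, obtained by inserting $\pm b$ and expanding; the cross-term $\langle b,c-a\rangle$ is precisely what $I_3$ bounds, suggesting $\lambda_3$ of order $1/\gamma$, with the residuals patched by $I_1,I_2$ (also of order $1/\gamma$) and $I_4$ (constant order). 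Matching coefficients of the six basis quadratics in $(a,b,c)$ on both sides of the desired identity reduces the search to a small linear system whose solution can be certified non-negative under the stated hypotheses.

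With \eqref{eq:EG_norm_inequality} established, the second claim is immediate: under $\gamma \leq 1/(2L)$ and $\gamma \geq 4\rho$ the three bracketed coefficients $\tfrac{1}{2}-2L^2\gamma^2$, $\tfrac{1}{2}-\tfrac{\rho}{\gamma} \geq \tfrac{1}{4}$, and $\tfrac{1}{2}-\tfrac{2\rho}{\gamma}$ are all non-negative, so every term subtracted from $\|F(x^k)\|^2$ on the right-hand side of \eqref{eq:EG_norm_inequality} is non-negative, yielding $\|F(x^{k+1})\|^2 \leq \|F(x^k)\|^2$.

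The main obstacle is pinning down the multipliers. A naive route --- bounding $\langle b,c-a\rangle$ via $I_3$ and then $\|a-c\|^2$ via the triangle bound $\|a-c\| \leq (1+L\gamma)\|a-b\|$ --- is too loose to recover the specific coefficients of the statement; the three comonotonicity inequalities must cooperate in a coordinated way. This is precisely the situation the authors' PESTO/PEPit machinery is built for: the SDP dual returns the correct multipliers, and the resulting algebraic identity is then routine to verify by hand (or symbolically, as the accompanying repository does).
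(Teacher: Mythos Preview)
Your approach is essentially the paper's: it combines exactly $I_2$, $I_3$, $I_4$ with multipliers $(\lambda_1,\lambda_2,\lambda_3,\lambda_4)=(0,\,1/\gamma,\,2/\gamma,\,2)$ (so the comonotonicity inequality $I_1$ at $(x^k,\tx^k)$ is not needed), and then expands the resulting inner products via $2\langle u,v\rangle=\|u\|^2+\|v\|^2-\|u-v\|^2$ to obtain \eqref{eq:EG_norm_inequality} verbatim. Plugging these weights into your $\sum_i \lambda_i I_i$ and simplifying completes the argument.
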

\begin{proof}
    From $L$-Lipschitzness and $\rho$-negative comonotonicity of $F$ we have
    \begin{gather*}
        \|F(\tx^k) - F(x^{k+1})\|^2 \leq L^2 \|\tx^k - x^{k+1}\|^2,\\
        \langle F(\tx^k) - F(x^{k+1}), \tx^k - x^{k+1}\rangle \geq -\rho \|F(\tx^k) - F(x^{k+1})\|^2,\\
        \langle F(x^k) - F(x^{k+1}), x^k - x^{k+1} \rangle \geq -\rho \|F(x^k) - F(x^{k+1})\|^2.
    \end{gather*}
    Taking into account $\tx^k - x^{k+1} = \gamma (F(\tx^k) - F(x^k))$ and $x^k - x^{k+1} = \gamma F(\tx^k)$, we get
    \begin{gather*}
        \|F(\tx^k) - F(x^{k+1})\|^2 \leq L^2\gamma^2 \|F(\tx^k) - F(x^k)\|^2,\\
        \gamma\langle F(\tx^k) - F(x^{k+1}), F(\tx^k) - F(x^k)\rangle \geq -\rho \|F(\tx^k) - F(x^{k+1})\|^2,\\
        \gamma\langle F(x^k) - F(x^{k+1}), F(\tx^k) \rangle \geq -\rho \|F(x^k) - F(x^{k+1})\|^2.
    \end{gather*}
    Next, we sum up the above inequalities with weights $2$, $\nicefrac{1}{\gamma}$, and $\nicefrac{2}{\gamma}$ respectively:
    \begin{align*}
        2\|F(\tx^k) - F(x^{k+1})\|^2 &- \frac{\rho}{\gamma}\|F(\tx^k) - F(x^{k+1})\|^2 - \frac{2\rho}{\gamma}\|F(x^k) - F(x^{k+1})\|^2\\
        &\leq 2L^2\gamma^2\|F(\tx^k) - F(x^k)\|^2 + \langle F(\tx^k) - F(x^{k+1}), F(\tx^k) - F(x^k)\rangle\\
        &\quad + 2\langle F(x^k), F(\tx^k) \rangle - 2\langle F(x^{k+1}), F(\tx^k) \rangle.
    \end{align*}
    To get rid of the inner products, we use $2\langle a,b \rangle = \|a\|^2 + \|b\|^2 - \|a - b\|^2$, which holds for any $a,b \in \R^d$. Using this, we continue our derivation as follows:
    \begin{align*}
        2\|F(\tx^k) - F(x^{k+1})\|^2 &- \frac{\rho}{\gamma}\|F(\tx^k) - F(x^{k+1})\|^2 - \frac{2\rho}{\gamma}\|F(x^k) - F(x^{k+1})\|^2\\
        &\leq 2L^2\gamma^2\|F(\tx^k) - F(x^k)\|^2 + \frac{1}{2}\|F(\tx^k) - F(x^{k+1})\|^2 + \frac{1}{2}\|F(\tx^k) - F(x^k)\|^2\\
        &\quad - \frac{1}{2}\|F(x^k) - F(x^{k+1})\|^2 + \|F(x^k)\|^2 + \|F(\tx^k)\|^2 - \|F(\tx^k) - F(x^k)\|^2\\
        &\quad - \|F(x^{k+1})\|^2 - \|F(\tx^k)\|^2 + \|F(\tx^k) - F(x^{k+1})\|^2.
    \end{align*}
    Rearranging the terms we get
    \begin{eqnarray}
        \|F(x^{k+1})\|^2 &\leq& \|F(x^k)\|^2 - \left(\frac{1}{2} - 2L^2\gamma^2\right)\|F(\tx^k) - F(x^k)\|^2 \notag\\
        &&\quad - \left(\frac{1}{2} - \frac{\rho}{\gamma}\right)\|F(\tx^k) - F(x^{k+1})\|^2 - \left(\frac{1}{2} - \frac{2\rho}{\gamma}\right)\|F(x^k) - F(x^{k+1})\|^2, \notag
    \end{eqnarray}
    which concludes the proof.
\end{proof}

Using this lemma we construct the potential-based proof of the last-iterate convergence of \ref{eq:EG}.
\begin{theorem}[Second part of Theorem~\ref{thm:EG_convergence}]
    Let $F$ be $L$-Lipschitz and $\rho$-negative comonotone. Then, for any $k \geq 0$ the iterates produced by \ref{eq:EG} with $\gamma_1 = \gamma_2 = \gamma$ such that $4\rho \leq \gamma \leq \nicefrac{1}{2L}$ satisfy
    \begin{equation}
        \Phi_{k+1} \leq \Phi_k,\quad \text{where}\quad \Phi_k = \|x^k - x^*\|^2 + \left(k\gamma^2\left(1 - \frac{5\rho}{2\gamma} - L^2\gamma^2\right) + 40\gamma\rho\right)\|F(x^k)\|^2. \label{eq:EG_potential}
    \end{equation}
    That is, under the introduced assumptions on $\gamma$ and $\rho$ for any $N \geq 1$ the iterates produced by \ref{eq:EG} satisfy
    \begin{equation}
        \|F(x^N)\|^2 \leq \frac{(1 + 40\gamma\rho L^2)\|x^0 - x^*\|^2}{N\gamma^2\left(1 - \frac{5\rho}{2\gamma} - L^2\gamma^2\right) + 40\gamma\rho}. \label{eq:EG_last_iter_apendix}
    \end{equation}
\end{theorem}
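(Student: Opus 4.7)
The plan is to derive the last-iterate bound~\eqref{eq:EG_last_iter_apendix} as an immediate consequence of the potential monotonicity~\eqref{eq:EG_potential}. Once $\Phi_{k+1}\leq \Phi_k$ is established for all $k\geq 0$, telescoping yields $\Phi_N\leq\Phi_0$. Since $F(x^*)=0$, the Lipschitz property gives $\|F(x^0)\|\leq L\|x^0-x^*\|$ and hence $\Phi_0\leq (1+40\gamma\rho L^2)\|x^0-x^*\|^2$; discarding the nonnegative term $\|x^N-x^*\|^2$ yields $\Phi_N\geq c_N\|F(x^N)\|^2$ with $c_N=N\gamma^2(1-\tfrac{5\rho}{2\gamma}-L^2\gamma^2)+40\gamma\rho$, and dividing gives exactly~\eqref{eq:EG_last_iter_apendix}.

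The central task is therefore to prove the one-step inequality $\Phi_{k+1}\leq \Phi_k$. We start from Lemma~\ref{lem:EG_main_lemma_appendix} specialized to $\gamma_1=\gamma_2=\gamma$:
\[ \|x^{k+1}-x^*\|^2 \leq \|x^k-x^*\|^2 + 2\gamma\rho\|F(\tx^k)\|^2 - \gamma^2(1-L^2\gamma^2)\|F(x^k)\|^2. \]
To absorb the stray positive term $2\gamma\rho\|F(\tx^k)\|^2$, we apply Young's inequality with parameter $\epsilon=\tfrac14$ to the decomposition $F(\tx^k)=F(x^{k+1})+[F(\tx^k)-F(x^{k+1})]$, which gives
\[ 2\gamma\rho\|F(\tx^k)\|^2\leq \tfrac{5\gamma\rho}{2}\|F(x^{k+1})\|^2+10\gamma\rho\|F(\tx^k)-F(x^{k+1})\|^2, \]
and then invoke the norm-nonincrease $\|F(x^{k+1})\|^2\leq\|F(x^k)\|^2$ from the second part of Lemma~\ref{lem:EG_norm_decreases} (which holds because $4\rho\leq\gamma\leq\nicefrac{1}{2L}$). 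The coefficient in front of $\|F(x^k)\|^2$ then becomes exactly $-\gamma^2(1-\tfrac{5\rho}{2\gamma}-L^2\gamma^2)=-(c_{k+1}-c_k)$. Writing $c_{k+1}\|F(x^{k+1})\|^2-c_k\|F(x^k)\|^2=c_{k+1}\bigl(\|F(x^{k+1})\|^2-\|F(x^k)\|^2\bigr)+(c_{k+1}-c_k)\|F(x^k)\|^2$, the $(c_{k+1}-c_k)\|F(x^k)\|^2$ piece cancels exactly and we are left with
\[ \Phi_{k+1}-\Phi_k \leq c_{k+1}\bigl(\|F(x^{k+1})\|^2-\|F(x^k)\|^2\bigr) + 10\gamma\rho\|F(\tx^k)-F(x^{k+1})\|^2. \]

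Plugging the full inequality~\eqref{eq:EG_norm_inequality} into the first difference produces three squared-dissipation terms with coefficients $c_{k+1}(\tfrac12-2L^2\gamma^2)$, $c_{k+1}(\tfrac12-\tfrac{\rho}{\gamma})$ and $c_{k+1}(\tfrac12-\tfrac{2\rho}{\gamma})$, all nonnegative under $4\rho\leq\gamma\leq\nicefrac{1}{2L}$. The main obstacle is then to verify that the second contribution dominates the leftover $10\gamma\rho\|F(\tx^k)-F(x^{k+1})\|^2$: from $\gamma\geq 4\rho$ we get $\tfrac12-\tfrac{\rho}{\gamma}\geq\tfrac14$, and since $\tfrac{5\rho}{2\gamma}+L^2\gamma^2\leq\tfrac58+\tfrac14<1$ the sequence $c_k$ is nondecreasing, so $c_{k+1}\geq c_0=40\gamma\rho$; combining the two bounds gives $c_{k+1}(\tfrac12-\tfrac{\rho}{\gamma})\geq 10\gamma\rho$ and therefore $\Phi_{k+1}\leq\Phi_k$. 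The particular choice $\epsilon=\tfrac14$ is dictated by the twin requirements that $2\gamma\rho(1+\epsilon)=\tfrac{5\gamma\rho}{2}$ exactly match the $-\tfrac{5\rho}{2\gamma}\gamma^2\|F(x^k)\|^2$ correction encoded in $c_k$ and that $2\gamma\rho(1+\tfrac1\epsilon)=10\gamma\rho$ stay below $c_0\cdot\tfrac14$, which in turn forces the $40\gamma\rho$ buffer and the $-\tfrac{5\rho}{2\gamma}$ correction in the definition of $\Phi_k$.
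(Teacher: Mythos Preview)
Your proof is correct and follows essentially the same approach as the paper's: the same specialization of Lemma~\ref{lem:EG_main_lemma_appendix} to $\gamma_1=\gamma_2=\gamma$, the same Young inequality with parameter $\tfrac14$ applied to $F(\tx^k)=F(x^{k+1})+[F(\tx^k)-F(x^{k+1})]$, and the same use of Lemma~\ref{lem:EG_norm_decreases} both for the norm-nonincrease and for the $(\tfrac12-\tfrac{\rho}{\gamma})\|F(\tx^k)-F(x^{k+1})\|^2$ dissipation term, together with the same key estimate $c_{k+1}\geq 40\gamma\rho$ and $\tfrac12-\tfrac{\rho}{\gamma}\geq\tfrac14$. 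The only difference is the order of operations (you apply Young's inequality before assembling $\Phi_{k+1}-\Phi_k$, whereas the paper first combines and then applies Young's), which is cosmetic.
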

\begin{proof}
    From \eqref{eq:EG_key_inequality} with $\gamma_1 = \gamma_2 = \gamma$ we have
    \begin{equation*}
        \|x^{k+1} - x^*\|^2 \leq \|x^k - x^*\|^2 + 2\gamma\rho\|F(\tx^k)\|^2 - \gamma^2\left(1 - L^2\gamma^2\right) \|F(x^k)\|^2.
    \end{equation*}
    Next, taking into account that $4\rho \leq \gamma \leq \nicefrac{1}{2L}$, we also have from Lemma~\ref{lem:EG_norm_decreases} the following inequality:
    \begin{equation}
        \|F(x^{k+1})\|^2 \leq \|F(x^k)\|^2 - \left(\frac{1}{2} - \frac{\rho}{\gamma}\right)\|F(\tx^k) - F(x^{k+1})\|^2. \label{eq:hcbsdbsdvchgs}
    \end{equation}
    Using these two inequalities, we derive the following upper bound on $\Phi_{k+1}$:
    \begin{eqnarray*}
        \Phi_{k+1} &=& \|x^{k+1} - x^*\|^2 + \left((k+1)\gamma^2\left(1 - \frac{5\rho}{2\gamma} - L^2\gamma^2\right) + 40\gamma\rho\right)\|F(x^{k+1})\|^2\\
        &\leq& \|x^k - x^*\|^2 + 2\gamma\rho\|F(\tx^k)\|^2 - \gamma^2\left(1 - L^2\gamma^2\right) \|F(x^k)\|^2\\
        &&\quad + \left((k+1)\gamma^2\left(1 - \frac{5\rho}{2\gamma} - L^2\gamma^2\right) + 40\gamma\rho\right)\left(\|F(x^k)\|^2 - \left(\frac{1}{2} - \frac{\rho}{\gamma}\right)\|F(\tx^k) - F(x^{k+1})\|^2\right)\\
        &=& \Phi_k + 2\gamma\rho\|F(\tx^k)\|^2 - \frac{5}{2}\gamma\rho\|F(x^k)\|^2\\
        &&\quad - \left((k+1)\gamma^2\left(1 - \frac{5\rho}{2\gamma} - L^2\gamma^2\right) + 40\gamma\rho\right)\left(\frac{1}{2} - \frac{\rho}{\gamma}\right)\|F(\tx^k) - F(x^{k+1})\|^2\\
        &\leq& \Phi_k + 2\gamma\rho\|F(\tx^k)\|^2 - \frac{5}{2}\gamma\rho\|F(x^k)\|^2 - 20\rho(\gamma - 2\rho)\|F(\tx^k) - F(x^{k+1})\|^2.
    \end{eqnarray*}
    Finally, we apply $\|a+b\|^2 \leq (1+\beta)\|a\|^2 + (1+\beta^{-1})\|b\|^2$, which holds $\forall a,b\in \R^d$, $\beta > 0$, with $\beta = \nicefrac{1}{4}$ to upper bound the second term in the right-hand side of the above inequality and continue our derivation as follows:
    \begin{eqnarray*}
        \Phi_{k+1} &\leq& \Phi_k + 2\gamma\rho\|F(x^{k+1}) + F(\tx^k) - F(x^{k+1})\|^2 - \frac{5}{2}\gamma\rho\|F(x^k)\|^2 \\
        &&\quad - 20\rho(\gamma - 2\rho)\|F(\tx^k) - F(x^{k+1})\|^2\\
        &\leq& \Phi_k + 2\gamma\rho\left(1 + \frac{1}{4}\right)\|F(x^{k+1})\|^2 + 2\gamma\rho\left(1 + 4\right)\|F(\tx^k) - F(x^{k+1})\|^2 - \frac{5}{2}\gamma\rho\|F(x^k)\|^2\\
        &&\quad - 20\rho(\gamma - 2\rho)\|F(\tx^k) - F(x^{k+1})\|^2\\
        &=& \Phi_k + \frac{5}{2}\gamma\rho\|F(x^{k+1})\|^2 - \frac{5}{2}\gamma\rho\|F(x^k)\|^2 - 10\rho\left(\gamma - 4\rho\right)\|F(\tx^k) - F(x^{k+1})\|^2.
    \end{eqnarray*}
    Taking into account $\|F(x^{k+1})\|^2 \overset{\eqref{eq:hcbsdbsdvchgs}}{\leq} \|F(x^k)\|^2$ and $\gamma \geq 4\rho$, we get \eqref{eq:EG_potential}. Next, we unroll \eqref{eq:EG_potential} and derive \eqref{eq:EG_last_iter_apendix}:
    \begin{eqnarray*}
        \|F(x^N)\|^2 &\leq& \frac{1}{N\gamma^2\left(1 - \frac{5\rho}{2\gamma} - L^2\gamma^2\right) + 40\gamma\rho}\Phi_N \\
        &\leq& \frac{1}{N\gamma^2\left(1 - \frac{5\rho}{2\gamma} - L^2\gamma^2\right) + 40\gamma\rho}\Phi_{N-1} \leq \ldots \leq \frac{1}{N\gamma^2\left(1 - \frac{5\rho}{2\gamma} - L^2\gamma^2\right) + 40\gamma\rho}\Phi_0\\
        &=& \frac{\|x^0 - x^*\|^2 + 40\gamma\rho\|F(x^0)\|^2}{N\gamma^2\left(1 - \frac{5\rho}{2\gamma} - L^2\gamma^2\right) + 40\gamma\rho}\\
        &\overset{\eqref{eq:Lipschitzness}}{\leq}& \frac{(1 + 40\gamma\rho L^2)\|x^0 - x^*\|^2}{N\gamma^2\left(1 - \frac{5\rho}{2\gamma} - L^2\gamma^2\right) + 40\gamma\rho},
    \end{eqnarray*}
    which concludes the proof of \eqref{eq:EG_last_iter_apendix}. Moreover, \eqref{eq:EG_last_iterate} follows from \eqref{eq:EG_last_iter_apendix} since $4\rho \leq \gamma \leq \nicefrac{1}{2L}$ implies $1 - (\nicefrac{5\rho}{2\gamma}) - L^2\gamma^2 \geq \nicefrac{1}{8}$ and  $1 + 40\gamma\rho L^2 \leq \nicefrac{7}{2}$.
\end{proof}

\subsubsection{Counter-examples}

\begin{theorem}[Theorem~\ref{thm:EG_counter_example}]\label{thm:EG_counter_example_appendix}
    For any $L > 0$, $\rho \geq \nicefrac{1}{2L}$, and any choice of stepsizes $\gamma_1, \gamma_2 > 0$ there exists $\rho$-negative comonotone $L$-Lipschitz operator $F$ such that \ref{eq:EG} does not necessary converges on solving \ref{eq:VIP} with this operator $F$. In particular, for $\gamma_1 > \nicefrac{1}{L}$ it is sufficient to take $F(x) = L x$, and for $0 < \gamma_1 \leq \nicefrac{1}{L}$ one can take $F(x) = L A x$, where $x \in \R^2$, $$A = \begin{pmatrix} \cos\theta & -\sin\theta \\ \sin\theta & \cos\theta \end{pmatrix},\quad \theta = \frac{2\pi}{3}.$$ 
\end{theorem}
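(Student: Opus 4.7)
The plan is to verify the two explicit constructions suggested in the statement separately: the monotone linear operator $F(x)=Lx$ handles the regime $\gamma_1 > 1/L$, while the rotation operator $F(x) = LAx$ with $A$ the planar rotation by $\theta = 2\pi/3$ handles $0 < \gamma_1 \le 1/L$. In both cases $F$ is linear, so the iterates of \ref{eq:EG} satisfy $x^{k+1} = M x^k$ for the explicit matrix
\begin{equation*}
M \;=\; I - \gamma_2 L B + \gamma_1 \gamma_2 L^2 B^2, \qquad B \in \{I, A\}.
\end{equation*}
The unique zero of $F$ is $x^\ast = 0$, so divergence amounts to exhibiting an initialization $x^0$ with $\|M^k x^0\| \not\to 0$, which is guaranteed as soon as $M$ has spectral radius strictly greater than one.

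For the first case, I would observe that $F(x)=Lx$ is $L$-Lipschitz and monotone, hence $\rho$-negative comonotone for every $\rho\ge 0$ and in particular for $\rho \ge 1/(2L)$. The iteration collapses to the scalar recursion $x^{k+1} = \bigl(1 - \gamma_2 L(1 - \gamma_1 L)\bigr) x^k$. When $\gamma_1 > 1/L$ the factor $1-\gamma_1 L$ is negative, so the scalar multiplier strictly exceeds $1$ for any $\gamma_2 > 0$, and any nonzero $x^0$ diverges.

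For the second case I would first verify that $F(x) = LAx$ belongs to the announced class: since $A$ is orthogonal, $\|A(x-y)\| = \|x-y\|$ gives Lipschitzness, while
\begin{equation*}
\langle F(x) - F(y),\, x-y \rangle \;=\; L \cos\theta\,\|x-y\|^2 \;=\; -\tfrac{L}{2}\|x-y\|^2 \;\ge\; -\rho L^2\|x-y\|^2
\end{equation*}
holds exactly when $\rho \ge 1/(2L)$. To analyze $M = I - \gamma_2 L A + \gamma_1 \gamma_2 L^2 A^2$, I would diagonalize over $\CC$: the eigenvalues of $A$ are $e^{\pm i\theta}$, and since $3\theta = 2\pi$ one has $e^{\pm 2i\theta} = e^{\mp i\theta}$, so $M$ has eigenvalues $\mu_\pm = 1 - \gamma_2 L\, e^{\pm i\theta} + \gamma_1\gamma_2 L^2\, e^{\mp i\theta}$. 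A direct expansion of $\mu_+ \overline{\mu_+}$ using $\cos\theta = \cos(2\theta) = -1/2$ yields
\begin{equation*}
|\mu_+|^2 \;=\; 1 + \gamma_2 L\bigl(1 - \gamma_1 L\bigr) + \gamma_2^2 L^2 \bigl(1 + \gamma_1 L + \gamma_1^2 L^2\bigr),
\end{equation*}
and every term added to $1$ is nonnegative while the last one is strictly positive, so $|\mu_+| > 1$ whenever $0 < \gamma_1 \le 1/L$ and $\gamma_2 > 0$.

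The final step is to upgrade this spectral statement into a pointwise norm inequality. Since $I$, $A$, and $A^2$ all commute and preserve the identification $\R^2 \cong \CC$ sending $A$ to multiplication by $e^{i\theta}$, the matrix $M$ itself acts as multiplication by the complex scalar $\mu_+$, so $\|Mx\| = |\mu_+|\,\|x\|$ for every $x \in \R^2$. Hence any nonzero $x^0$ yields $\|x^k\| = |\mu_+|^k \|x^0\| \to \infty$, contradicting convergence to $x^\ast = 0$. The only nontrivial ingredient is the explicit computation of $|\mu_+|^2$, but it reduces to a short expansion after substituting the values of $\cos(2\pi/3)$ and $\cos(4\pi/3)$, and no case distinctions arise beyond the two regimes of $\gamma_1$ already separated in the statement.
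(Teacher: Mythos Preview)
Your proof is correct and follows essentially the same approach as the paper: the same two explicit operators, the same verification of Lipschitzness and negative comonotonicity, and the same spectral-radius argument for the transition matrix $M = I - \gamma_2 L B + \gamma_1\gamma_2 L^2 B^2$. The only difference is cosmetic: the paper writes out $M$ as an explicit $2\times 2$ real matrix and reads off its eigenvalues, whereas you diagonalize $A$ over $\CC$ (using $e^{2i\theta}=e^{-i\theta}$) and then note that $M$ acts as multiplication by the complex scalar $\mu_+$, which yields the same modulus $|\mu_+|^2 = 1 + \gamma_2 L(1-\gamma_1 L) + \gamma_2^2 L^2(1+\gamma_1 L + \gamma_1^2 L^2)$ and additionally makes transparent that \emph{every} nonzero initialization diverges.
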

\begin{proof}
    Assume that $L > 0$ and $\rho \geq \nicefrac{1}{2L}$. We start with the case when $\gamma_1 > \nicefrac{1}{L}$. Consider operator $F(x) = Lx$. This operator is $L$-Lipschitz. Moreover, $F$ is monotone and, as the result, it is $\rho$-negative comonotone for any $\rho \geq 0$. The iterates produced by \ref{eq:EG} with $x^0 \neq 0$ satisfy
    \begin{equation*}
        \tx^k = (1 - L\gamma_1)x^k,\quad x^{k+1} = x^k - L\gamma_2\tx^k = (1 - L\gamma_2 + L^2\gamma_2\gamma_1)x^k\notag
    \end{equation*}
    implying that
    \begin{equation*}
        \|x^{k+1} - x^*\| = \|x^{k+1}\| = |1 - L\gamma_2 + L^2\gamma_2\gamma_1|\cdot \|x^k\| > \|x^k\| = \|x^k - x^*\|,
    \end{equation*}
    since $1 - L\gamma_2 + L^2\gamma_2\gamma_1 > 1 - L\gamma_2 + L\gamma_2 = 1$. That is, if $x^0 \neq 0$, then \ref{eq:EG} diverges in this case.
    
    Next, assume that $\gamma_1 < \nicefrac{1}{L}$ and consider $F(x) = L A x$, where $x \in \R^2$, $$A = \begin{pmatrix} \cos\theta & -\sin\theta \\ \sin\theta & \cos\theta \end{pmatrix},\quad \theta = \frac{2\pi}{3}.$$
    Operator $F$ is $L$-Lipschitz and $(\nicefrac{1}{2L})$-negative comonotone: for any $x,y \in \R^d$
    \begin{eqnarray*}
        \|F(x) - F(y)\| &=& L\|A(x-y)\| = L\|x-y\|,\\
        \langle F(x) - F(y), x-y \rangle &=& \|F(x) - F(y)\|\cdot \|x-y\| \cdot \cos \theta \\
        &=& \|F(x) - F(y)\|\cdot \|A(x-y)\| \cdot \cos \frac{2\pi}{3}\\
        &=& - \frac{1}{2L}\|F(x) - F(y)\|^2
    \end{eqnarray*}
    where we use the fact that $A$ is a rotation matrix. That is, $F(x)$ satisfies the conditions of the theorem. Taking into account that
    \begin{equation*}
        A = \begin{pmatrix} \cos\theta & -\sin\theta \\ \sin\theta & \cos\theta \end{pmatrix} = \begin{pmatrix} -\frac{1}{2} & -\frac{\sqrt{3}}{2} \\ \frac{\sqrt{3}}{2} & \frac{1}{2} \end{pmatrix},\quad A^2 = \begin{pmatrix} \cos(2\theta) & -\sin(2\theta) \\ \sin(2\theta) & \cos(2\theta) \end{pmatrix} = \begin{pmatrix} -\frac{1}{2} & \frac{\sqrt{3}}{2} \\ -\frac{\sqrt{3}}{2} & \frac{1}{2} \end{pmatrix},
    \end{equation*}
    we rewrite the update rule of \ref{eq:EG} as follows:
    \begin{eqnarray*}
        x^{k+1} &=& x^k - \gamma_2F\left(x^k - \gamma_2 F(x^k)\right)\\
        &=& x^k - \gamma_2 L A\left(x^k - \gamma_2 L A x^k\right)\\
        &=& \left(I - \gamma_2 L A + \gamma_1\gamma_2 L^2 A^2\right)x^k.
    \end{eqnarray*}
    To prove the divergence of \ref{eq:EG}, it remains to show that $\exists \lambda \in \Sp(I - \gamma_2 L A + \gamma_1\gamma_2 L^2 A^2)$ such that $|\lambda| > 1$. Indeed, we have
    \begin{eqnarray*}
        I - \gamma_2 L A + \gamma_1\gamma_2 L^2 A^2 &=& \begin{pmatrix}
            1 & 0 \\ 0 & 1
        \end{pmatrix} - \gamma_2 L \begin{pmatrix} -\frac{1}{2} & -\frac{\sqrt{3}}{2} \\ \frac{\sqrt{3}}{2} & \frac{1}{2} \end{pmatrix} + \gamma_1\gamma_2 L^2 \begin{pmatrix} -\frac{1}{2} & \frac{\sqrt{3}}{2} \\ -\frac{\sqrt{3}}{2} & \frac{1}{2} \end{pmatrix}\\
        &=& \begin{pmatrix}
            1 + \frac{\gamma_2 L}{2}(1 - \gamma_1 L) & \frac{\sqrt{3}\gamma_2L}{2}(1 + \gamma_1 L) \\ -\frac{\sqrt{3}\gamma_2L}{2}(1 + \gamma_1 L) & 1 + \frac{\gamma_2 L}{2}(1 - \gamma_1 L).
        \end{pmatrix}
    \end{eqnarray*}
    The above matrix has eigenvalues $\lambda_{1,2} = 1 + \frac{\gamma_2 L}{2}(1 - \gamma_1 L) \pm i \cdot \frac{\sqrt{3}\gamma_2L}{2}(1 + \gamma_1 L)$. Since $\gamma_2 > 0$ and $\gamma_1 \leq \nicefrac{1}{L}$ we have that $|\lambda_{1,2}|^2 = \left(1 + \frac{\gamma_2 L}{2}(1 - \gamma_1 L)\right)^2 + \frac{3}{4}\gamma_2^2L^2 (1 + \gamma_1 L)^2 > 1$. This concludes the proof.
\end{proof}

\subsection{Optimistic gradient}
\subsubsection{Guarantees for the averaged squared norm of the operator}

In this section, we proceed in an analogous way to the previous section on the Extragradient method. For notation convenience, we assume that $F(\tx^{-1}) = 0$. Then, the update rule for \ref{eq:OG} can be written as
\begin{equation}\tag{\algname{OG}}
    \begin{aligned}
    \tx^{k} &= x^k  - \gamma_1 F(\tx^{k-1}),\\
    x^{k+1} &= x^k - \gamma_2 F(\tx^k),
    \end{aligned} \quad \forall k \geq 0.
\end{equation}

\begin{lemma}\label{lem:OG_main_lemma_appendix}
    Let $F$ be $\rho$-star-negative comonotone. Then, for any $k\geq 0$ the iterates produced by \ref{eq:OG} after $k\geq 0$ iterations satisfy
    \begin{eqnarray}
        \|x^{k+1} - x^*\|^2 &\leq& \|x^k - x^*\|^2 - \gamma_2\left(\gamma_1 - 2\rho - \gamma_2\right)\|F(\tx^k)\|^2 - \gamma_1\gamma_2\|F(\tx^{k-1})\|^2\notag\\
        &&\quad + \gamma_1\gamma_2\|F(\tx^k) - F(\tx^{k-1})\|^2. \label{eq:OG_key_inequality}
    \end{eqnarray}
\end{lemma}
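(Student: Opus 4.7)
The plan is to mirror closely the proof of Lemma~\ref{lem:EG_main_lemma_appendix}, replacing the role of $F(x^k)$ (which appears in the extrapolation step of \ref{eq:EG}) by $F(\tx^{k-1})$ (which plays the analogous role in the extrapolation step $\tx^k = x^k - \gamma_1 F(\tx^{k-1})$ of \ref{eq:OG}). Note that the statement does not require Lipschitzness, so contrary to the \ref{eq:EG} case we will \emph{not} absorb the term $\gamma_1\gamma_2\|F(\tx^k) - F(\tx^{k-1})\|^2$; it remains in the inequality and must be handled later.

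First I would expand $\|x^{k+1} - x^*\|^2$ using the update $x^{k+1} = x^k - \gamma_2 F(\tx^k)$, obtaining
\[
\|x^{k+1} - x^*\|^2 = \|x^k - x^*\|^2 - 2\gamma_2 \langle x^k - x^*, F(\tx^k)\rangle + \gamma_2^2 \|F(\tx^k)\|^2.
\]
Then I would rewrite $x^k - x^* = (\tx^k - x^*) + \gamma_1 F(\tx^{k-1})$ (using $\tx^k = x^k - \gamma_1 F(\tx^{k-1})$) in the inner product, splitting it into a ``star-term'' $-2\gamma_2 \langle \tx^k - x^*, F(\tx^k)\rangle$ and a cross term $-2\gamma_1\gamma_2 \langle F(\tx^{k-1}), F(\tx^k)\rangle$.

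Next I would apply $\rho$-star-negative comonotonicity \eqref{eq:rho_star_neg_comon} at $\tx^k$ to bound the star-term by $2\rho\gamma_2 \|F(\tx^k)\|^2$, and use the identity $2\langle a,b\rangle = \|a\|^2 + \|b\|^2 - \|a-b\|^2$ on the cross term to get
\[
-2\gamma_1\gamma_2 \langle F(\tx^{k-1}), F(\tx^k)\rangle = -\gamma_1\gamma_2 \|F(\tx^{k-1})\|^2 - \gamma_1\gamma_2 \|F(\tx^k)\|^2 + \gamma_1\gamma_2 \|F(\tx^k) - F(\tx^{k-1})\|^2.
\]

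Finally, collecting the coefficients of $\|F(\tx^k)\|^2$ yields $\gamma_2^2 + 2\rho\gamma_2 - \gamma_1\gamma_2 = -\gamma_2(\gamma_1 - 2\rho - \gamma_2)$, which produces exactly \eqref{eq:OG_key_inequality}. The case $k=0$ is covered by the convention $F(\tx^{-1}) = 0$, under which $\tx^0 = x^0$ and the cross-term and the $\|F(\tx^{-1})\|^2$ term vanish consistently. There is no serious obstacle here; the only subtlety compared to the \ref{eq:EG} proof is the bookkeeping to recognize that the extrapolation direction is now $F(\tx^{k-1})$ rather than $F(x^k)$, and that without a Lipschitz assumption the extra term $\gamma_1\gamma_2\|F(\tx^k) - F(\tx^{k-1})\|^2$ must be left as is and handled in subsequent lemmas when proving best-iterate or last-iterate rates.
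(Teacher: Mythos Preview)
Your proposal is correct and follows essentially the same approach as the paper's own proof: expand $\|x^{k+1}-x^*\|^2$, split $x^k-x^*$ using the extrapolation step to isolate the star-term and the cross term $-2\gamma_1\gamma_2\langle F(\tx^{k-1}),F(\tx^k)\rangle$, apply $\rho$-star-negative comonotonicity to the former, and the polarization identity to the latter. Your remark about the convention $F(\tx^{-1})=0$ matches the paper as well.
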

\begin{proof}
    By the definition of $x^{k+1}$ and $\tx^k$ we have
    \begin{eqnarray*}
        \|x^{k+1} - x^*\|^2 &=& \|x^k - x^*\|^2 - 2\gamma_2 \langle x^k - x^*, F(\tx^k) \rangle + \gamma_2^2 \|F(\tx^k)\|^2\\
        &=& \|x^k - x^*\|^2 - 2\gamma_2 \langle \tx^k - x^*, F(\tx^k) \rangle - 2\gamma_1\gamma_2 \langle F(\tx^{k-1}), F(\tx^k) \rangle + \gamma_2^2 \|F(\tx^k)\|^2.
    \end{eqnarray*}
    Next, we estimate the second term in the right-hand side using star-negative comonotonicity:
    \begin{eqnarray*}
        \|x^{k+1} - x^*\|^2 &\overset{\eqref{eq:rho_star_neg_comon}}{\leq}& \|x^k - x^*\|^2 + \gamma_2\left(2\rho + \gamma_2\right)\|F(\tx^k)\|^2 - 2\gamma_1\gamma_2 \langle F(\tx^{k-1}), F(\tx^k) \rangle.
    \end{eqnarray*}
    Finally, we handle the last term in the right-hand side of the above inequality using $2\langle a,b \rangle = \|a\|^2 + \|b\|^2 - \|a - b\|^2$, which holds for any $a,b \in \R^d$:
    \begin{eqnarray}
        \|x^{k+1} - x^*\|^2 &\leq& \|x^k - x^*\|^2 - \gamma_2\left(\gamma_1 - 2\rho - \gamma_2\right)\|F(\tx^k)\|^2 - \gamma_1\gamma_2\|F(\tx^{k-1})\|^2\notag\\
        &&\quad + \gamma_1\gamma_2\|F(\tx^{k-1}) - F(\tx^k)\|^2. \notag
    \end{eqnarray}
\end{proof}

This lemma is a building block of the first part of Theorem~\ref{thm:OG_convergence}.
\begin{theorem}[First part of Theorem~\ref{thm:OG_convergence}]
    Let $F$ be $L$-Lipschitz and $\rho$-star-negative comonotone with $\rho < \nicefrac{1}{2L}$. If $2\rho < \gamma_1 < \nicefrac{1}{L}$ and $0 < \gamma_2 < \min\{\nicefrac{1}{L} - \gamma_1, \gamma_1 - 2\rho\}$, then the iterates produced by \ref{eq:OG} after $N\geq 0$ iteration satisfy
    \begin{equation}
        \frac{1}{N+1}\sum\limits_{k=0}^N \|F(\tx^k)\|^2 \leq \frac{\|x^0 - x^*\|^2}{\gamma_1\gamma_2(1 - L^2(\gamma_1 + \gamma_2)^2)(N+1)}. \label{eq:OG_best_iterate_appendix}
    \end{equation}
\end{theorem}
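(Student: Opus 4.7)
The plan is to telescope Lemma~\ref{lem:OG_main_lemma_appendix} from $k=0$ to $N$, dispose of the cross term $\gamma_1\gamma_2\|F(\tx^k)-F(\tx^{k-1})\|^2$ using the $L$-Lipschitzness of $F$, and read off the claimed inequality $\gamma_1\gamma_2(1-L^2(\gamma_1+\gamma_2)^2)\sum_{k=0}^N\|F(\tx^k)\|^2\leq \|x^0-x^*\|^2$. Denote $H_k := \|F(\tx^k)\|^2$, so that summing the lemma and discarding $\|x^{N+1}-x^*\|^2\geq 0$ yields
\begin{equation*}
\gamma_2(\gamma_1-2\rho-\gamma_2)\sum_{k=0}^N H_k + \gamma_1\gamma_2\sum_{k=0}^{N-1} H_k \leq \|x^0-x^*\|^2 + \gamma_1\gamma_2\sum_{k=0}^N \|F(\tx^k)-F(\tx^{k-1})\|^2.
\end{equation*}

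The key structural observation is that the two \ref{eq:OG} updates combine into the single-step identity $\tx^k = x^{k-1} - (\gamma_1+\gamma_2)F(\tx^{k-1})$ for $k\geq 1$, so $\tx^k - \tx^{k-1} = -(\gamma_1+\gamma_2)F(\tx^{k-1}) + \gamma_1 F(\tx^{k-2})$ (with convention $F(\tx^{-1})=0$). Expanding $\|\tx^k-\tx^{k-1}\|^2$, using the elementary identity $2\langle a,b\rangle=\|a\|^2+\|b\|^2-\|a-b\|^2$, and applying Lipschitzness to $\|F(\tx^{k-1})-F(\tx^{k-2})\|^2$, I obtain a recurrence on $b_k := \|\tx^k-\tx^{k-1}\|^2$:
\begin{equation*}
b_k \leq \gamma_2(\gamma_1+\gamma_2) H_{k-1} - \gamma_1\gamma_2 H_{k-2} + \gamma_1(\gamma_1+\gamma_2)L^2\, b_{k-1},
\end{equation*}
whose contraction factor $\gamma_1(\gamma_1+\gamma_2)L^2$ is strictly less than $1$ under the assumption $\gamma_1+\gamma_2<1/L$. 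Summing this (starting from $b_1=(\gamma_1+\gamma_2)^2 H_0$) yields a bound on $B:=\sum_{k=1}^N b_k$ of the form $(1-\gamma_1(\gamma_1+\gamma_2)L^2)\,B \leq \gamma_2(\gamma_1+\gamma_2)\sum_{k=0}^{N-1}H_k + \text{boundary}$.

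Combining the Lipschitz estimate $\sum_{k=0}^N \|F(\tx^k)-F(\tx^{k-1})\|^2 \leq H_0 + L^2 B$ (with $H_0 \leq L^2\|x^0-x^*\|^2$ since $F(x^*)=0$) with the bound on $B$ and the telescoped inequality, then reorganizing the two sums $\sum H_k$ and $\sum H_{k-1}$ (using $H_{-1}=0$), produces a coefficient $\gamma_1\gamma_2(1-L^2(\gamma_1+\gamma_2)^2)$ in front of $\sum_{k=0}^N H_k$, from which the claim follows. The main obstacle is Step 2: isolating precisely the factor $(1-L^2(\gamma_1+\gamma_2)^2)$, since a naive triangle/Young bound on $\|\tx^k-\tx^{k-1}\|^2$ gives a suboptimal constant like $(2\gamma_1+\gamma_2)^2$. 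It is the self-referential structure of the recurrence, contracting by $\gamma_1(\gamma_1+\gamma_2)L^2<1$, that allows us to recover exactly $(\gamma_1+\gamma_2)^2$ at the cost of inverting $1-\gamma_1(\gamma_1+\gamma_2)L^2$.
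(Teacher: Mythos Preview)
Your proposal is correct and follows essentially the same route as the paper: telescoping Lemma~\ref{lem:OG_main_lemma_appendix}, using the identity $\tx^k-\tx^{k-1}=-(\gamma_1+\gamma_2)F(\tx^{k-1})+\gamma_1 F(\tx^{k-2})$ together with $2\langle a,b\rangle=\|a\|^2+\|b\|^2-\|a-b\|^2$ and Lipschitzness to obtain a recurrence with contraction factor $L^2\gamma_1(\gamma_1+\gamma_2)<1$, summing it, and combining so that $1-L^2\gamma_1(\gamma_1+\gamma_2)-L^2\gamma_2(\gamma_1+\gamma_2)=1-L^2(\gamma_1+\gamma_2)^2$ emerges as the final coefficient. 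The only cosmetic difference is that you set up the recurrence on $b_k=\|\tx^k-\tx^{k-1}\|^2$ whereas the paper sets it up directly on $\|F(\tx^k)-F(\tx^{k-1})\|^2$; the two are interchangeable via the Lipschitz bound.
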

\begin{proof}
    We first upper bound the last term that appeared in Lemma~\ref{lem:OG_main_lemma_appendix} using $L$-Lipschitzness:
    \begin{align*}
        \|F(\tx^k) - F(\tx^{k-1})\|^2 &\leq L^2 \| \tx^k - \tx^{k-1}\|^2 \\
        &= L^2 \| (\tx^k - x^k) + (x^k - x^{k-1})  + (x^{k-1} - \tx^{k-1})\|^2 \\
        &= L^2 \| (\gamma_1 + \gamma_2) F(\tx^{k-1})  - \gamma_1 F(\tx^{k-2})\|^2 \\ 
        &= L^2  (\gamma_1 + \gamma_2)^2 \| F(\tx^{k-1})\|^2  + L^2 \gamma_1^2  \|F(\tx^{k-2})\|^2 \\
        &\quad - 2L^2 (\gamma_1 + \gamma_2)\gamma_1 \langle  F(\tx^{k-1}),  F(\tx^{k-2}) \rangle.
    \end{align*}
    Decomposing the last term using $2\langle a,b \rangle =  \|a\|^2 + \|a\|^2 - \|a-b\|^2$ yields
    \begin{align}
        \|F(\tx^k) - F(\tx^{k-1})\|^2 &\leq L^2  (\gamma_1 + \gamma_2)\gamma_2 \| F(\tx^{k-1})\|^2  - L^2 \gamma_1 \gamma_2  \|F(\tx^{k-2})\|^2\notag\\
        &\quad+ L^2  \gamma_1 (\gamma_1 + \gamma_2) \|  F(\tx^{k-1}) - F(\tx^{k-2}) \|^2.\label{eq:OG_rec_1}
    \end{align}
    The above recurrence holds for $k \geq 2$. For $k=1$, we have $\tx^0 = x^0$ and $\tx^1 = x^1 - \gamma_1 F(\tx^0) = x^0 - (\gamma_1 + \gamma_2) F(x^0)$. Therefore, \begin{align}
    \label{eq:OG_rec_2}
        \|F(\tx^1) - F(\tx^{0})\|^2 &\leq L^2  (\gamma_1 + \gamma_2)^2 \| F(x^{0})\|^2.
    \end{align}
   Combining \eqref{eq:OG_rec_2} and \eqref{eq:OG_rec_1}, we obtain
    \begin{align*}
       \sum_{k=1}^N \|F(\tx^k) - F(\tx^{k-1})\|^2 &\leq \sum_{k=2}^N \left(L^2  (\gamma_1 + \gamma_2)\gamma_2 \| F(\tx^{k-1})\|^2  - L^2 \gamma_1 \gamma_2  \|F(\tx^{k-2})\|^2\right) \\
       &\quad + \sum_{k=2}^N L^2  \gamma_1 (\gamma_1 + \gamma_2) \|  F(\tx^{k-1}) - F(\tx^{k-2}) \|^2 + L^2  (\gamma_1 + \gamma_2)^2 \| F(x^{0})\|^2.
    \end{align*}
    We can simplify the terms on the right-hand side. Therefore, 
    \begin{align*}
       \sum_{k=1}^N \|F(\tx^k) - F(\tx^{k-1})\|^2 &\leq L^2  (\gamma_1 + \gamma_2)\gamma_2 \| F(\tx^{N-1})\|^2 + \sum_{k=2}^N L^2  \gamma_2^2 \| F(\tx^{k-1})\|^2  \\
       &\quad + \sum_{k=2}^N L^2  \gamma_1 (\gamma_1 + \gamma_2) \|  F(\tx^{k-1}) - F(\tx^{k-2}) \|^2 \\
       &\quad + L^2  (\gamma_1^2 + \gamma_1\gamma_2 + \gamma_2^2) \| F(x^{0})\|^2 \\
       &\leq \sum_{k=2}^N L^2  (\gamma_1 + \gamma_2)\gamma_2 \| F(\tx^{k-1})\|^2 \\
       &\quad + \sum_{k=1}^{N} L^2  \gamma_1 (\gamma_1 + \gamma_2) \|  F(\tx^{k}) - F(\tx^{k-1}) \|^2 \\
       &\quad  + L^2  (\gamma_1^2 + \gamma_1\gamma_2 + \gamma_2^2) \| F(x^{0})\|^2.
    \end{align*}
    Using the above equation, we can bound $\sum_{k=1}^N \|F(\tx^k) - F(\tx^{k-1})\|^2$, i.e.,
    \begin{align}
       (1 - L^2  \gamma_1 (\gamma_1 + \gamma_2))\sum_{k=1}^N \|F(\tx^k) - F(\tx^{k-1})\|^2 &\leq \sum_{k=1}^N L^2  (\gamma_1 + \gamma_2)\gamma_2 \| F(\tx^{k-1})\|^2 \notag \\
       &\quad + L^2 \gamma_1^2 \| F(x^{0})\|^2.  \label{eq:OG_rec_diff}
    \end{align}
    Let us now apply \eqref{eq:OG_key_inequality} recursively ($F(\tx^{-1}) = 0$ for simpler notation), which leads to
    \begin{align*}
        \|x^{N} - x^*\|^2 &\leq \|x^0 - x^*\|^2 - \sum_{k=0}^{N-1}\left(\gamma_2\left(\gamma_1 - 2\rho - \gamma_2\right)\|F(\tx^k)\|^2 + \gamma_1\gamma_2\|F(\tx^{k-1})\|^2\right) \\
        &\quad + \sum_{k=0}^{N-1} \gamma_1\gamma_2\|F(\tx^k) - F(\tx^{k-1})\|^2)\\
        &\leq \|x^0 - x^*\|^2 - \sum_{k=0}^{N-2}\gamma_2\left(2\gamma_1 - 2\rho - \gamma_2\right)\|F(\tx^k)\|^2  + \sum_{k=1}^{N-1}\gamma_1\gamma_2\|F(\tx^k) - F(\tx^{k-1})\|^2 \\
        &\quad + \gamma_1\gamma_2 \|F(x^0)\|^2. 
    \end{align*}
    Plugging \eqref{eq:OG_rec_diff} to the above with $1 - L^2  \gamma_1 (\gamma_1 + \gamma_2) > 0$, which follows from $\gamma_1 < \nicefrac{1}{L}$ and $\gamma_2 < \nicefrac{1}{L} - \gamma_1$, leads to
    \begin{align*}
        \|x^{N} - x^*\|^2 &\leq \|x^0 - x^*\|^2 - \sum_{k=0}^{N-2}\gamma_2\left(2\gamma_1 - 2\rho - \gamma_2\right)\|F(\tx^k)\|^2 + \gamma_1\gamma_2 \|F(x^0)\|^2 \\
        &\quad + \frac{\gamma_1\gamma_2^2 (\gamma_1 + \gamma_2) L^2}{1 - L^2  \gamma_1 (\gamma_1 + \gamma_2)}\sum_{k=0}^{N-2} \| F(\tx^{k})\|^2 + \frac{\gamma_1^3\gamma_2 L^2 }{1 - L^2  \gamma_1 (\gamma_1 + \gamma_2)} \| F(x^{0})\|^2. 
    \end{align*}
    Since $\gamma_1 - 2\rho - \gamma_2 > 0$, we have
     \begin{align*}
        \|x^{N} - x^*\|^2 &\leq \|x^0 - x^*\|^2 -  \gamma_1\gamma_2\sum_{k=0}^{N-2}\|F(\tx^k)\|^2 + \gamma_1\gamma_2 \|F(x^0)\|^2 \\
        &\quad + \frac{\gamma_1\gamma_2^2 (\gamma_1 + \gamma_2) L^2}{1 - L^2  \gamma_1 (\gamma_1 + \gamma_2)}\sum_{k=0}^{N-2} \| F(\tx^{k})\|^2 + \frac{\gamma_1^3\gamma_2 L^2 }{1 - L^2  \gamma_1 (\gamma_1 + \gamma_2)} \| F(x^{0})\|^2. 
    \end{align*}
    Rearranging terms and applying $\|x^{N} - x^*\|^2 \geq 0$ plus $\|F(x^0)\|^2 \leq L^2\|x^0 - x^*\|^2$, we obtain
    \begin{align*}
        \gamma_1\gamma_2(1 - L^2(\gamma_1 + \gamma_2)^2)\sum_{k=0}^{N-2}\|F(\tx^k)\|^2 &\leq (1 - L^2  \gamma_1 (\gamma_1 + \gamma_2))\|x^0 - x^*\|^2 \\
        &\quad + \gamma_1\gamma_2 L^2 (1 - L^2\gamma_1\gamma_2) \|x^0-x^*\|^2 \\
        &\leq (1-L^2\gamma_1^2)\|x^0-x^*\|^2 \leq \|x^0-x^*\|^2. 
    \end{align*}
    The above inequality implies \eqref{eq:OG_best_iterate_appendix}, since one can replace $N$ with $N+2$.
\end{proof}

\subsubsection{Last-iterate guarantees}

Our proof is based on the following lemma.

\begin{lemma}\label{lem:potential_OG}
    Let $F$ be $L$-Lipschitz and $\rho$-negative comonotone such that $\rho \leq \nicefrac{5}{62L}$. Then for any $k \geq 1$ the iterates produced by \ref{eq:EG} with $\gamma_1 = \gamma_2 = \gamma > 0$ such that $4\rho \leq \gamma \leq \nicefrac{10}{31L}$ satisfy
    \begin{eqnarray}
        \|F(x^{k+1})\|^2 + \|F(x^{k+1}) - F(\tx^k)\|^2  &\leq& \|F(x^{k})\|^2 + \|F(x^{k}) - F(\tx^{k-1})\|^2\notag\\
        &&\quad - \frac{1}{100}\|F(\tx^k) - F(\tx^{k-1})\|^2. \label{eq:OG_potential}
    \end{eqnarray}
\end{lemma}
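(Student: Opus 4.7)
}
The plan is to mimic the potential-decrease argument used for \ref{eq:EG} (Lemma~\ref{lem:EG_norm_decreases}), but with the enlarged potential $\Phi_k := \|F(x^k)\|^2 + \|F(x^k) - F(\tx^{k-1})\|^2$ that encodes the one-step memory intrinsic to \ref{eq:OG}. The target inequality is $\Phi_{k+1} \leq \Phi_k - \tfrac{1}{100}\|F(\tx^k) - F(\tx^{k-1})\|^2$, and we will obtain it as a nonnegative linear combination of three standard ingredients: $\rho$-negative comonotonicity evaluated at carefully chosen pairs of iterates, $L$-Lipschitzness evaluated at the same pairs, and the polarization identity $2\langle a,b\rangle = \|a\|^2+\|b\|^2-\|a-b\|^2$ to convert cross terms into squared norms. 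The key algebraic identities coming from the method are $x^{k+1}-x^k=-\gamma F(\tx^k)$, $\tx^k-x^k=-\gamma F(\tx^{k-1})$, $x^{k+1}-\tx^k = -\gamma(F(\tx^k)-F(\tx^{k-1}))$, together with their shifts $x^k - x^{k-1} = -\gamma F(\tx^{k-1})$ and $x^k - \tx^{k-1} = -\gamma(F(\tx^{k-1})-F(\tx^{k-2}))$.

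The concrete steps I would take are the following. First, write $\rho$-negative comonotonicity on the three pairs $(\tx^{k-1},\tx^k)$, $(\tx^k,x^{k+1})$, and $(x^k,x^{k+1})$; after substituting the identities above and multiplying each by $\gamma$, these become linear inequalities in inner products of $\{F(x^k),F(\tx^{k-1}),F(\tx^k),F(x^{k+1})\}$. Second, write $L$-Lipschitzness on $(\tx^k,x^{k+1})$, which gives $\|F(\tx^k)-F(x^{k+1})\|^2 \leq L^2\gamma^2\|F(\tx^k)-F(\tx^{k-1})\|^2$, and, if needed, the analogous bound on $(x^k,x^{k+1})$. Third, expand $\Phi_{k+1}-\Phi_k$ via polarization so that it becomes a quadratic form in the four vectors $F(x^k),F(\tx^{k-1}),F(\tx^k),F(x^{k+1})$. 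Fourth, take a nonnegative linear combination of the inequalities from the first two steps with multipliers chosen so that the result upper-bounds $\Phi_{k+1}-\Phi_k + \tfrac{1}{100}\|F(\tx^k)-F(\tx^{k-1})\|^2$, and verify that the residual quadratic form is nonnegative under the assumed constraints $4\rho \leq \gamma \leq \nicefrac{10}{31L}$ and $\rho \leq \nicefrac{5}{62L}$.

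The main obstacle is step four: the admissible range of $(\rho,\gamma)$ is narrow, and the residual constant $\nicefrac{1}{100}$ is small, so any loose application of Young's inequality will spoil the range. In practice the correct multipliers are not easy to guess; they were produced as the dual certificate of a PEP, and the verification is symbolic rather than ``pen-and-paper natural''. Concretely, I expect the combination to be of the form $\alpha_1\cdot[\text{comon on }(\tx^{k-1},\tx^k)] + \alpha_2\cdot[\text{comon on }(\tx^k,x^{k+1})] + \alpha_3\cdot[\text{comon on }(x^k,x^{k+1})] + \beta\cdot[\text{Lip on }(\tx^k,x^{k+1})]$ with $\alpha_i,\beta\geq 0$ depending on $\gamma,\rho,L$; once these are pinned down, the verification that the remaining symmetric matrix indexed by $\{F(x^k),F(\tx^{k-1}),F(\tx^k),F(x^{k+1})\}$ is positive semidefinite throughout the window of $(\rho,\gamma)$ can be done by inspection of its principal minors or, as in the accompanying repository, by symbolic computation.
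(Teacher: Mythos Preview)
Your plan is essentially the paper's approach: combine a small number of comonotonicity and Lipschitz inequalities, polarize, and verify that the residual $4\times 4$ quadratic form (in $F(x^{k+1}),F(x^k),F(\tx^k),F(\tx^{k-1})$) is negative semidefinite with margin $\tfrac{1}{100}$. Two points are worth correcting, however.

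First, the pair $(\tx^{k-1},\tx^k)$ should be dropped. Since $\tx^k-\tx^{k-1}=-2\gamma F(\tx^{k-1})+\gamma F(\tx^{k-2})$, using comonotonicity there drags in a \emph{fifth} vector $F(\tx^{k-2})$, contradicting your later claim that everything lives in the four-vector space. The paper uses only the two comonotonicity inequalities on $(x^{k+1},\tx^k)$ and $(x^{k+1},x^k)$ together with Lipschitzness on $(x^{k+1},\tx^k)$; no Lipschitz bound on $(x^k,x^{k+1})$ is needed either. So your $\alpha_1$ must be $0$, and your ``if needed'' second Lipschitz bound is not needed.

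Second, the multipliers are not the delicate parameter-dependent functions you anticipate. In the paper the three inequalities are combined with \emph{fixed} numerical weights (after a normalization, the Lipschitz inequality gets weight $3$ and the two comonotonicity inequalities get weight $1$ each). The dependence on $\gamma,\rho,L$ sits only in the coefficients $\rho/\gamma$ and $L^2\gamma^2$ appearing inside the inequalities; the hypotheses $4\rho\leq\gamma\leq\nicefrac{10}{31L}$ are then invoked to replace these by the worst-case numerical values $\rho/\gamma\leq\nicefrac14$ and $L^2\gamma^2\leq\nicefrac{100}{961}$, yielding a single concrete $4\times 4$ matrix whose required semidefinite ordering is checked once (symbolically/numerically). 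This is simpler than carrying $(\gamma,\rho,L)$ through a parametric PSD verification.
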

\begin{proof}
    From $\rho$-negative comonotonicity and $L$-Lipschitzness of $F$ we have
    \begin{eqnarray*}
        -\rho\|F(x^{k+1}) - F(\tx^{k})\|^2 &\leq& \langle F(x^{k+1}) - F(\tx^k), x^{k+1} - \tx^k \rangle,\\
        -\rho\|F(x^{k}) - F(x^{k+1})\|^2 &\leq& \langle F(x^{k}) - F(x^{k+1}), x^{k} - x^{k+1} \rangle,\\
        \|F(x^{k+1}) - F(\tx^k)\|^2 &\leq& L^2\|x^{k+1} - \tx^k\|^2.
    \end{eqnarray*}
    Using $x^{k+1} - \tx^k = \gamma (F(\tx^{k-1}) - F(\tx^{k}))$ and $x^{k+1} - x^k = - \gamma F(\tx^k)$, we rewrite the above inequalities as
    \begin{eqnarray}
        -\frac{\rho}{\gamma}\|F(x^{k+1}) - F(\tx^{k})\|^2 &\leq& \langle F(x^{k+1}) - F(\tx^k), F(\tx^{k-1}) - F(\tx^{k}) \rangle, \label{eq:OG_technical_1}\\
        -\frac{\rho}{\gamma}\|F(x^{k}) - F(x^{k+1})\|^2 &\leq& \langle F(x^{k}) - F(x^{k+1}), F(\tx^k) \rangle, \label{eq:OG_technical_2}\\
        \|F(x^{k+1}) - F(\tx^k)\|^2 &\leq& L^2\gamma^2\|F(\tx^{k-1}) - F(\tx^{k})\|^2. \label{eq:OG_technical_3}
    \end{eqnarray}
    Next, we apply $2\langle a,b \rangle = \|a\|^2 + \|b\|^2 - \|a - b\|^2$, which holds for any $a,b \in \R^d$, and from the first two inequalities get
    \begin{eqnarray}
        -\frac{2\rho}{3\gamma}\|F(x^{k+1}) - F(\tx^{k})\|^2 &\overset{\eqref{eq:OG_technical_1}}{\leq}& \frac{1}{3}\|F(x^{k+1}) - F(\tx^k)\|^2 + \frac{1}{3}\|F(\tx^{k-1}) - F(\tx^k)\|^2\notag\\
        &&\quad - \frac{1}{3}\|F(x^{k+1}) - F(\tx^{k-1})\|^2, \label{eq:OG_technical_1_1}\\
        -\frac{2\rho}{\gamma}\|F(x^{k}) - F(x^{k+1})\|^2 &\overset{\eqref{eq:OG_technical_2}}{\leq}& 2\langle F(x^{k}), F(\tx^k) \rangle - 2\langle F(x^{k+1}), F(\tx^k) \rangle\notag\\
        &=& \|F(x^k)\|^2 + \|F(\tx^k)\|^2 - \|F(x^k) - F(\tx^k)\|^2 \notag\\
        &&\quad - \|F(x^{k+1})\|^2 - \|F(\tx^k)\|^2 + \|F(x^{k+1}) - F(\tx^k)\|^2 \notag\\
        &=& \|F(x^k)\|^2 + \|F(x^{k+1}) - F(\tx^k)\|^2 - \|F(x^{k+1})\|^2\notag\\
        &&\quad - \|F(x^k) - F(\tx^k)\|^2.\label{eq:OG_technical_2_1}
    \end{eqnarray}
    Summing up \eqref{eq:OG_technical_3}, \eqref{eq:OG_technical_1_1}, and \eqref{eq:OG_technical_2_1} with weights $3$, $1$, and $1$ respectively, we derive
    \begin{eqnarray*}
        \left(3 - \frac{2\rho}{3\gamma}\right)\|F(x^{k+1}) - F(\tx^{k})\|^2 - \frac{2\rho}{\gamma}\|F(x^{k}) - F(x^{k+1})\|^2 &\leq& 3L^2\gamma^2\|F(\tx^{k-1}) - F(\tx^{k})\|^2\\
        &&\quad + \frac{1}{3}\|F(x^{k+1}) - F(\tx^k)\|^2\\
        &&\quad + \frac{1}{3}\|F(\tx^{k-1}) - F(\tx^k)\|^2\\
        &&\quad - \frac{1}{3}\|F(x^{k+1}) - F(\tx^{k-1})\|^2\\
        &&\quad + \|F(x^k)\|^2 + \|F(x^{k+1})- F(\tx^k)\|^2\\
        &&\quad- \|F(x^{k+1})\|^2 - \|F(x^k) - F(\tx^k)\|^2\\
        &=& \left(\frac{1}{3} + 3L^2\gamma^2\right)\|F(\tx^{k-1}) - F(\tx^{k})\|^2\\
        &&\quad + \frac{4}{3}\|F(x^{k+1})- F(\tx^k)\|^2\\
        &&\quad - \frac{1}{3}\|F(x^{k+1}) - F(\tx^{k-1})\|^2\\
        &&\quad + \|F(x^k)\|^2 - \|F(x^{k+1})\|^2\\
        &&\quad - \|F(x^k) - F(\tx^k)\|^2.
    \end{eqnarray*}
    To simplify further derivations, we introduce new notation: $\Psi_k = \|F(x^{k})\|^2 + \|F(x^{k}) - F(\tx^{k-1})\|^2$, $\forall k \geq 1$. Rearranging the terms in the above inequality and using the new notation, we arrive at
    \begin{eqnarray*}
        \Psi_{k+1} - \Psi_k &\leq& T_k,\quad \text{where}\\
        T_k &\eqdef& \frac{2\rho}{\gamma}\|F(x^{k}) - F(x^{k+1})\|^2 + \left(\frac{1}{3} + 3L^2\gamma^2\right)\|F(\tx^{k-1}) - F(\tx^{k})\|^2\\
        &&\quad - \frac{2}{3}\left(1 - \frac{\rho}{\gamma}\right)\|F(x^{k+1}) - F(\tx^k)\|^2 - \|F(x^k) - F(\tx^{k-1})\|^2\\
        &&\quad - \frac{1}{3}\|F(x^{k+1}) - F(\tx^{k-1})\|^2 - \|F(x^{k}) - F(\tx^k)\|^2.
    \end{eqnarray*}
    To prove \eqref{eq:OG_potential}, it remains to show that $T_k \leq - \frac{1}{100}\|F(\tx^k) - F(\tx^{k-1})\|^2$ for all $k \geq 1$. Taking into account $4\rho \leq \gamma \leq \nicefrac{10}{31L}$, we upper bound $T_k$ as follows:
    \begin{eqnarray}
        T_k &\leq& \frac{1}{2}\|F(x^{k}) - F(x^{k+1})\|^2 + \frac{121}{93}\|F(\tx^{k-1}) - F(\tx^{k})\|^2 - \frac{1}{2}\|F(x^{k+1}) - F(\tx^k)\|^2\notag\\
        &&\quad - \|F(x^k) - F(\tx^{k-1})\|^2 - \frac{1}{3}\|F(x^{k+1}) - F(\tx^{k-1})\|^2 - \|F(x^{k}) - F(\tx^k)\|^2\notag\\
        &=& \begin{pmatrix} F(x^{k+1}) \\ F(x^k) \\  F(\tx^{k}) \\  F(\tx^{k-1})\end{pmatrix}^\top \left(\begin{pmatrix} -\frac{1}{3} & -\frac{1}{2} &  \frac{1}{2} & \frac{1}{3} \\ -\frac{1}{2} & -\frac{3}{2} & 1 & 1\\ \frac{1}{2} & 1 & -\frac{4927}{5766} & - \frac{1861}{2883}\\ \frac{1}{3} & 1 & - \frac{1861}{2883} & - \frac{661}{961}\end{pmatrix} \otimes  I_d\right) \begin{pmatrix} F(x^{k+1}) \\ F(x^k) \\  F(\tx^{k}) \\  F(\tx^{k-1})\end{pmatrix}, \label{eq:OG_technical_4}
    \end{eqnarray}
    where $I_d$ is $d$-dimensional identity matrix and $A\otimes B$ denotes the Kronecker product of two matrices $A$ and $B$. One can show numerically (see our \href{https://github.com/eduardgorbunov/Proximal_Point_and_Extragradient_based_methods_negative_comonotonicity/blob/main/Optimistic_Gradient/Lemma_C8_symbolic_verification_and_matrix_estimation.ipynb}{codes}) that 
    \begin{equation*}
        \begin{pmatrix} -\frac{1}{3} & -\frac{1}{2} &  \frac{1}{2} & \frac{1}{3} \\ -\frac{1}{2} & -\frac{3}{2} & 1 & 1\\ \frac{1}{2} & 1 & -\frac{4927}{5766} & - \frac{1861}{2883}\\ \frac{1}{3} & 1 & - \frac{1861}{2883} & - \frac{661}{961}\end{pmatrix} \preccurlyeq -\frac{1}{100}\begin{pmatrix} 0 & 0 &  0 & 0 \\ 0 & 0 &  0 & 0\\ 0 & 0 &  1 & -1\\ 0 & 0 &  -1 & 1\end{pmatrix}.
    \end{equation*}
    Therefore, in view of \eqref{eq:OG_technical_4}, we have 
    \begin{eqnarray*}
        T_K &\leq& - \frac{1}{100} \begin{pmatrix} F(x^{k+1}) \\ F(x^k) \\  F(\tx^{k}) \\  F(\tx^{k-1})\end{pmatrix}^\top \left(\begin{pmatrix} 0 & 0 &  0 & 0 \\ 0 & 0 &  0 & 0\\ 0 & 0 &  1 & -1\\ 0 & 0 &  -1 & 1\end{pmatrix} \otimes  I_d\right) \begin{pmatrix} F(x^{k+1}) \\ F(x^k) \\  F(\tx^{k}) \\  F(\tx^{k-1})\end{pmatrix}\\
        &=& - \frac{1}{100}\|F(\tx^k) - F(\tx^{k-1})\|^2,
    \end{eqnarray*}
    which concludes the proof.
\end{proof}

We emphasize that similar potential $\|F(x^{k})\|^2 + \|F(x^{k}) - F(\tx^{k-1})\|^2$ is used in \citet{cai2022tight} to prove the last-iterate convergence of \ref{eq:OG} for \emph{monotone} $L$-Lipschitz $F$. However, our proof non-trivially differs from the one from \citet{cai2022tight}: we use $\rho$-negative comonotonicity for pairs $(x^{k+1}, \tx^k)$, $(x^{k+1}, x^k)$ and $L$-Lipschitzness for $(x^{k+1}, \tx^k)$, while \citet{cai2022tight} use \emph{monotonicity} $(x^{k+1}, x^k)$ and $L$-Lipschitzness for $(x^{k+1}, \tx^k)$. Next, the only known last-iterate $\cO(\nicefrac{1}{N})$ convergence rate for \ref{eq:OG} under $\rho$-negative comonotonicity and $L$-Lipschitzness \citep{luo2022last} is based on a different potential $\|F(x^{k+1})\|^2 + \frac{2\gamma - 3\rho}{2\gamma}\|F(x^{k+1}) - F(\tx^k)\|^2$, which coincides with the one used by \citet{gorbunov2022last} in the monotone case. Moreover, the proof from \citet{luo2022last} is based on $2$ inequalities: $\rho$-negative comonotonicity for pairs $(x^{k+1}, x^k)$ and $L$-Lipschitzness for $(x^{k+1}, \tx^k)$. In contrast, our proof is based on $3$ inequalities, i.e., it uses more information about the problem. This might be the reason, why our proof allows $\rho$ to be larger than in the proof by \citet{luo2022last}.

Using Lemma~\ref{lem:potential_OG}, we can proceed with the potential-based proof of the last-iterate convergence of \ref{eq:OG}.
\begin{theorem}[Second part of Theorem~\ref{thm:OG_convergence}]
    Let $F$ be $L$-Lipschitz and $\rho$-negative comonotone. Then, for any $k \geq 0$ the iterates produced by \ref{eq:OG} with $\gamma_1 = \gamma_2 = \gamma$ such that $4\rho \leq \gamma \leq \nicefrac{10}{31L}$ satisfy
    \begin{equation}
        \Phi_{k+1} \leq \Phi_k,\quad \text{where}\quad \Phi_k = \|x^k - x^*\|^2 + \left(k \frac{\gamma(\gamma - 3\rho)}{2  + 6L^2\gamma^2} + 400\gamma^2\right)\Psi_k, \label{eq:OG_potential_full}
    \end{equation}
    where $\Psi_k = \|F(x^{k})\|^2 + \|F(x^{k}) - F(\tx^{k-1})\|^2$.
    That is, under the introduced assumptions on $\gamma$ and $\rho$ for any $N \geq 1$ the iterates produced by \ref{eq:OG} satisfy
        \begin{equation}
        \|F(x^N)\|^2 \leq \frac{717\|x^0 - x^*\|^2}{N \gamma(\gamma - 3\rho) + 800\gamma^2}. \label{eq:OG_last_iter_apendix}
    \end{equation}
\end{theorem}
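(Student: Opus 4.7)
I will prove the monotone decrease of the potential $\Phi_k$ by carefully combining the two lemmas already available: Lemma~\ref{lem:OG_main_lemma_appendix} specialized to $\gamma_1 = \gamma_2 = \gamma$ (which controls the one-step change in $\|x^k - x^*\|^2$) and Lemma~\ref{lem:potential_OG} (which provides the key contractive inequality $\Psi_{k+1} \leq \Psi_k - \tfrac{1}{100}\|F(\tx^k)-F(\tx^{k-1})\|^2$). The rate bound \eqref{eq:OG_last_iter_apendix} then follows by telescoping and by standard upper/lower bounds on $\Phi_0$ and $\Phi_N$.

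\textbf{Step 1: Potential decomposition.} I write
\[
\Phi_{k+1}-\Phi_k = \bigl(\|x^{k+1}-x^*\|^2-\|x^k-x^*\|^2\bigr) + c_{k+1}(\Psi_{k+1}-\Psi_k) + (c_{k+1}-c_k)\Psi_k,
\]
with $c_k = k\,\gamma(\gamma-3\rho)/(2+6L^2\gamma^2) + 400\gamma^2$, so that $c_{k+1}-c_k = \gamma(\gamma-3\rho)/(2+6L^2\gamma^2)$. Specializing Lemma~\ref{lem:OG_main_lemma_appendix} to $\gamma_1=\gamma_2=\gamma$ gives $\gamma_2(\gamma_1-2\rho-\gamma_2)=-2\rho\gamma$, and therefore a \emph{positive} $2\rho\gamma\|F(\tx^k)\|^2$ contribution that must be absorbed by the other two terms. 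Lemma~\ref{lem:potential_OG} produces the negative contribution $-\tfrac{c_{k+1}}{100}\|F(\tx^k)-F(\tx^{k-1})\|^2$ in the second bracket.

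\textbf{Step 2: Absorption by Young and Lipschitz bounds.} To control the remaining positive terms I will use $\|F(\tx^k)\|^2 \leq 2\|F(\tx^{k-1})\|^2 + 2\|F(\tx^k)-F(\tx^{k-1})\|^2$ on the output of Lemma~\ref{lem:OG_main_lemma_appendix}, yielding a bound involving only $\|F(\tx^{k-1})\|^2$ and $\|F(\tx^k)-F(\tx^{k-1})\|^2$. For the residual term $(c_{k+1}-c_k)\Psi_k$, I will split $\Psi_k = \|F(x^k)\|^2 + \|F(x^k)-F(\tx^{k-1})\|^2$ and apply Young plus Lipschitzness (using $x^k-\tx^{k-1}=\gamma(F(\tx^{k-2})-F(\tx^{k-1}))$) to re-express it in the same two quantities, up to factors involving $L^2\gamma^2$. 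Assembling everything, $\Phi_{k+1}-\Phi_k$ is bounded by a $2\times 2$ quadratic form in $(\|F(\tx^{k-1})\|,\|F(\tx^k)-F(\tx^{k-1})\|)$ with coefficients depending on $\gamma$, $\rho$, and $L$; the stepsize window $4\rho \leq \gamma \leq 10/(31L)$ and $\rho \leq 5/(62L)$ is precisely what makes this form negative semidefinite. This is the main technical obstacle---the constants $1/100$, $400\gamma^2$, and the incremental coefficient $\gamma(\gamma-3\rho)/(2+6L^2\gamma^2)$ must fit together exactly, and verifying the SDP-style inequality will likely be done by a direct two-by-two discriminant check, analogous in spirit to the matrix argument at the end of Lemma~\ref{lem:potential_OG}.

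\textbf{Step 3: Telescoping and extracting the rate.} Once $\Phi_{k+1}\leq \Phi_k$ is established, iterating gives $\Phi_N \leq \Phi_0$. For the lower bound, I drop nonnegative terms to obtain $\Phi_N \geq c_N\|F(x^N)\|^2$, and note that
\[
c_N = \frac{N\gamma(\gamma-3\rho) + 400\gamma^2(2+6L^2\gamma^2)}{2+6L^2\gamma^2} \geq \frac{N\gamma(\gamma-3\rho)+800\gamma^2}{2+6L^2\gamma^2}.
\]
For the upper bound, the convention $F(\tx^{-1})=0$ gives $\Psi_0 = 2\|F(x^0)\|^2 \leq 2L^2\|x^0-x^*\|^2$ by Lipschitzness and $F(x^*)=0$, hence $\Phi_0 \leq (1+800L^2\gamma^2)\|x^0-x^*\|^2$. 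Combining these with $L^2\gamma^2 \leq 100/961$ (from $\gamma \leq 10/(31L)$) produces a numerical constant $(1+800L^2\gamma^2)(2+6L^2\gamma^2) \leq 717$, which yields exactly \eqref{eq:OG_last_iter_apendix}. The main obstacle throughout is simply the bookkeeping of numerical constants; once the quadratic-form check in Step~2 goes through, the rest is mechanical.
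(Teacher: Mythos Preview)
Your overall architecture matches the paper's proof exactly: combine Lemma~\ref{lem:OG_main_lemma_appendix} (with $\gamma_1=\gamma_2=\gamma$) and Lemma~\ref{lem:potential_OG}, then use Young and Lipschitz bounds to reduce the surplus terms to a nonpositive combination of $\|F(\tx^{k-1})\|^2$ and $\|F(\tx^k)-F(\tx^{k-1})\|^2$. Two technical slips, however, would block the proof as written.

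\emph{First}, in Step~2 your chosen identity $x^k-\tx^{k-1}=\gamma(F(\tx^{k-2})-F(\tx^{k-1}))$ is correct but unhelpful: applying Lipschitzness to it yields $\|F(x^k)-F(\tx^{k-1})\|^2 \le L^2\gamma^2\|F(\tx^{k-2})-F(\tx^{k-1})\|^2$, which involves the \emph{previous} step's difference and is not one of your ``same two quantities.'' The paper instead routes through $\tx^k$ via $x^k-\tx^k=\gamma F(\tx^{k-1})$: first $\|F(x^k)-F(\tx^{k-1})\|^2 \le 2\|F(x^k)-F(\tx^k)\|^2 + 2\|F(\tx^k)-F(\tx^{k-1})\|^2$, then $\|F(x^k)-F(\tx^k)\|^2 \le L^2\gamma^2\|F(\tx^{k-1})\|^2$. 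This actually lands in the two target quantities. The paper also uses the asymmetric Young split $\|F(\tx^k)\|^2 \le \tfrac{3}{2}\|F(\tx^{k-1})\|^2 + 3\|F(\tx^k)-F(\tx^{k-1})\|^2$ (i.e.\ $\beta=\tfrac12$), and with these choices the coefficients on $\|F(\tx^{k-1})\|^2$ and on $\|F(\tx^k)-F(\tx^{k-1})\|^2$ collapse to zero and $\le 0$ respectively---no $2\times 2$ discriminant check is actually needed.

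\emph{Second}, in Step~3 you telescope to $\Phi_0$, but Lemma~\ref{lem:potential_OG} is stated only for $k\ge 1$, so the decrease $\Phi_1\le\Phi_0$ is not delivered by your argument. The paper accordingly telescopes only to $\Phi_1$ and then bounds $\|x^1-x^*\|^2\le 2\|x^0-x^*\|^2$ (via \eqref{eq:OG_key_inequality}) and $\Psi_1\le 8L^2\|x^0-x^*\|^2$ directly; this is where the numerical constant $717$ comes from, not from your $\Phi_0$ bound.
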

\begin{proof}
    From \eqref{eq:OG_key_inequality} with $\gamma_1 = \gamma_2 = \gamma$ we have
    \begin{equation*}
      \|x^{k+1} - x^*\|^2 \leq \|x^k - x^*\|^2 + 2\rho\gamma\|F(\tx^k)\|^2 - \gamma^2\|F(\tx^{k-1})\|^2  + \gamma^2\|F(\tx^k) - F(\tx^{k-1})\|^2.
    \end{equation*}
    Next, taking into account that $4\rho \leq \gamma \leq \nicefrac{10}{31L}$, we also have from Lemma~\ref{lem:potential_OG} the following inequality:
    \begin{equation*}
        \|F(x^{k+1})\|^2 + \|F(x^{k+1}) - F(\tx^k)\|^2  \leq \|F(x^{k})\|^2 + \|F(x^{k}) - F(\tx^{k-1})\|^2  - \frac{1}{100}\|F(\tx^k) - F(\tx^{k-1})\|^2.
    \end{equation*}
    Using these two inequalities, we derive the following upper bound on $\Phi_{k+1}$:
    \begin{align*}
        \Phi_{k+1} &= \|x^{k+1} - x^*\|^2 + \left((k+1) \frac{\gamma(\gamma - 3\rho)}{2  + 6L^2\gamma^2} + 400\gamma^2\right)\Psi_{k+1} \\
        &\leq \|x^k - x^*\|^2 + 2\rho\gamma\|F(\tx^k)\|^2 - \gamma^2\|F(\tx^{k-1})\|^2  + \gamma^2\|F(\tx^k) - F(\tx^{k-1})\|^2 \\
        &\quad + \left((k+1) \frac{\gamma(\gamma - 3\rho)}{2  + 6L^2\gamma^2} + 400\gamma^2\right) \left( \|F(x^{k})\|^2 + \|F(x^{k}) - F(\tx^{k-1})\|^2  - \frac{1}{100}\|F(\tx^k) - F(\tx^{k-1})\|^2 \right) \\
        &\leq \Phi_{k} + 2\rho\gamma\|F(\tx^k)\|^2 - \gamma^2\|F(\tx^{k-1})\|^2  + \gamma^2\|F(\tx^k) - F(\tx^{k-1})\|^2 \\
         &\quad + \frac{\gamma(\gamma - 3\rho)}{2  + 6L^2\gamma^2} \left(\|F(x^{k})\|^2 + \|F(x^{k}) - F(\tx^{k-1})\|^2\right) - 4\gamma^2\|F(\tx^k) - F(\tx^{k-1})\|^2  \\
         &= \Phi_{k} + 2\rho\gamma\|F(\tx^k)\|^2 - \gamma^2\|F(\tx^{k-1})\|^2 \\
         &\quad + \frac{\gamma(\gamma - 3\rho)}{2  + 6L^2\gamma^2} \left(\|F(x^{k})\|^2 + \|F(x^{k}) - F(\tx^{k-1})\|^2\right) - 3\gamma^2\|F(\tx^k) - F(\tx^{k-1})\|^2.
    \end{align*}
    In the next step, we use following upper bounds based on $L$-Lipschitzness, \eqref{eq:OG}, and $\|a+b\|^2 \leq (1+\beta)\|a\|^2 + (1+\beta^{-1})\|b\|^2$, which holds $\forall a,b\in \R^d$, $\beta > 0$:
    \begin{align*}
         \|F(\tx^k)\|^2 &\leq 3\|F(\tx^k) - F(\tx^{k-1})\|^2 + \nicefrac{3}{2}\|F(\tx^{k-1})\|^2, \\
         \|F(x^k) - F(\tx^{k-1})\|^2 &\leq 2\|F(x^k) - F(\tx^k)\|^2 + 2\|F(\tx^k) - F(\tx^{k-1})\|^2 \\
         &\leq 2L^2\gamma^2\|F(\tx^{k-1})\|^2 + 2\|F(\tx^k) - F(\tx^{k-1})\|^2, \\ 
        \|F(x^k)\|^2 &\leq 2\|F(x^k) - F(\tx^{k-1})\|^2 + 2\|F(\tx^{k-1})\|^2\\
        &\leq 2(1 + 2L^2\gamma^2)\|F(\tx^{k-1}) \|^2 + 4\|F(\tx^k) - F(\tx^{k-1})\|^2.
    \end{align*}
    Applying the above yields
    \begin{align*}
        \Phi_{k+1} &\leq \Phi_{k} + \left(\frac{\gamma(\gamma - 3\rho)}{2  + 6L^2\gamma^2}(2  + 6L^2\gamma^2) + 3\rho\gamma - \gamma^2\right)\|F(\tx^{k-1})\|^2\\
        &\quad + \left(6\rho\gamma + \frac{3\gamma(\gamma - 3\rho)}{1+3L^2\gamma^2} - 3\gamma^2\right)\|F(\tx^k) - F(\tx^{k-1})\|^2\\
        &\leq \Phi_{k},
    \end{align*}
    where we use $1 + 3L^2\gamma^2 \geq 1$. This applies that $\Phi_{N} \leq \Phi_1$, therefore
    \begin{align*}
        \|F(x^N)\|^2 &\leq \frac{\|x^1 - x^*\|^2 + \left(\frac{\gamma(\gamma - 3\rho)}{2  + 6L^2\gamma^2} + 400\gamma^2\right)\left(\|F(x^{1})\|^2 + \|F(x^{1}) - F(x^0)\|^2\right)}{N \frac{\gamma(\gamma - 3\rho)}{2  + 6L^2\gamma^2} + 400\gamma^2}
    \end{align*}
    In the next step, we bound everything with respect to $\|x^0 - x^*\|^2$ using $\rho$-negative comonotonicity, $L$-Lipschitzness, \eqref{eq:OG}, and $\|a+b\|^2 \leq (1+\beta)\|a\|^2 + (1+\beta^{-1})\|b\|^2$: 
    \begin{align*}
       \|x^1 - x^*\|^2 &\overset{\eqref{eq:OG_key_inequality}}{\leq} \|x^0 - x^*\|^2 + \gamma (2\rho + \gamma)\|F(x^0)\|^2 \\
       &\leq (1 + L^2\gamma (2\rho + \gamma))\|x^0 - x^*\|^2 \leq 2\|x^0 - x^*\|^2, \\
       \|F(x^{1})\|^2 + \|F(x^{1}) - F(x^0)\|^2 &\leq 3\|F(x^1)\|^2 + 2\|F(x^{0})\|^2 \\
       &\leq L^2 (3\|x^1 - x^*\|^2 + 2\|x^0 - x^*\|^2) \leq 8L^2\|x^0 - x^*\|^2, \\
       2 &\leq 2  + 6L^2\gamma^2 \leq 3.
    \end{align*}
    Putting all together yields
    \begin{align*}
        \|F(x^N)\|^2 &\leq \frac{(2 + 401 L^2 \cdot 8\gamma^2)\|x^0 - x^*\|^2}{N \frac{\gamma(\gamma - 3\rho)}{2} + 400\gamma^2} 
        \leq \frac{717\|x^0 - x^*\|^2}{N \gamma(\gamma - 3\rho) + 800\gamma^2},
    \end{align*}
    which concludes the proof.
\end{proof}

\subsubsection{Counter-examples}
\begin{theorem}[Theorem~\ref{thm:OG_counter_example}]\label{thm:OG_counter_example_appendix}
    For any $L > 0$, $\rho \geq \nicefrac{1}{2L}$, and any choice of stepsizes $\gamma_1, \gamma_2 > 0$ there exists $\rho$-negative comonotone $L$-Lipschitz operator $F$ such that \ref{eq:OG} does not necessary converges on solving \ref{eq:VIP} with this operator $F$. In particular, for $\gamma_1 > \nicefrac{1}{L}$ it is sufficient to take $F(x) = L x$, where $x \in \R$, and for $0 < \gamma_1 \leq \nicefrac{1}{L}$ one can take $F(x) = L A x$, where $x \in \R^2$, $$A = \begin{pmatrix} \cos\theta & -\sin\theta \\ \sin\theta & \cos\theta \end{pmatrix},\quad \theta = \frac{2\pi}{3}.$$ 
\end{theorem}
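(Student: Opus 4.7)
The plan is to mirror the proof of Theorem~\ref{thm:EG_counter_example_appendix} with the same two counter-examples, exploiting linearity to reduce convergence to a spectral-radius question. Since $F$ is linear, I would eliminate $\tx^k$ via the identity $F(\tx^{k-1}) = (x^{k-1}-x^k)/\gamma_2$ and rewrite \ref{eq:OG} as the two-step real recursion
\begin{equation*}
x^{k+1} = (I - (\gamma_1+\gamma_2)LA)\,x^k + \gamma_1 LA\, x^{k-1}, \qquad k\ge 1,
\end{equation*}
initialized by $x^1 = (I - \gamma_2 LA)x^0$, where $A = 1$ in the first case ($F(x)=Lx$) and $A$ is the $2\pi/3$-rotation in the second ($F(x)=LAx$). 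The $L$-Lipschitzness and $\tfrac{1}{2L}$-negative comonotonicity of these $F$ are identical to those already verified in the proof of Theorem~\ref{thm:EG_counter_example_appendix}. The convergence question then reduces to showing that the real state-transition matrix $\mathcal{M} = \bigl(\begin{smallmatrix} I - (\gamma_1+\gamma_2)LA & \gamma_1 LA \\ I & 0\end{smallmatrix}\bigr)$ has an eigenvalue of modulus strictly greater than $1$; once it does, the iterates $x^k$ fail to converge to $0$ for generic initial conditions.

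For $\gamma_1 > 1/L$ (first case, $A = 1$), the matrix $\mathcal{M}$ is $2\times 2$ with $\det \mathcal{M} = -L\gamma_1$, so the product of its eigenvalues has modulus $L\gamma_1 > 1$ and at least one exceeds $1$. For $0 < \gamma_1 \le 1/L$, the rotation $A$ is normal with complex eigenvalues $\mu = e^{\pm i 2\pi/3}$, so the spectrum of $\mathcal{M}$ decomposes in the corresponding eigenbasis, and my goal becomes showing that, for $\mu = e^{i2\pi/3}$ and any $0 < b \le 1$, $a > b$, the scalar characteristic polynomial
\begin{equation*}
p_\mu(\lambda) = \lambda^2 - (1 - a\mu)\lambda - b\mu, \qquad a := L(\gamma_1+\gamma_2),\ b := L\gamma_1,
\end{equation*}
has a root with $|\lambda| > 1$.

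The main obstacle is this last spectral claim. My strategy uses that the two roots $\lambda_\pm$ satisfy $|\lambda_+ \lambda_-| = b$ and (from $\lambda_\pm = (1-a\mu)/2 \pm \sqrt{\Delta}/2$)
\begin{equation*}
|\lambda_+|^2 + |\lambda_-|^2 = \tfrac{1}{2}\bigl(|1-a\mu|^2 + |\Delta|\bigr) = \tfrac{1}{2}\bigl(1+a+a^2+|\Delta|\bigr),
\end{equation*}
where $\Delta = (1-a\mu)^2 + 4b\mu$. A short Lagrange argument on the constraint $|\lambda_+||\lambda_-|=b$ shows that if both $|\lambda_\pm| \le 1$, then $|\lambda_+|^2 + |\lambda_-|^2 \le 1 + b^2$; so it suffices to prove the strict inequality $|\Delta| > Y$ with $Y := 1 + 2b^2 - a - a^2$, which is automatic when $Y \le 0$. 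When $Y > 0$, I would expand $|\Delta|^2 - Y^2$ using $\mu = -\tfrac{1}{2} + i\tfrac{\sqrt 3}{2}$, substitute $c := a - b > 0$, and verify that the result rearranges to
\begin{equation*}
|\Delta|^2 - Y^2 \;=\; 2b^2(1-b) \;+\; (4 - 4b - 6b^2 + 8b^3)\,c \;+\; (6 - 2b + 4b^2)\,c^2 \;+\; 2c^3,
\end{equation*}
each coefficient of which is non-negative on $b \in (0,1]$ (with only the $c^0$-coefficient vanishing at the boundary $b = 1$). Hence $|\Delta|^2 > Y^2$ strictly on the admissible region, yielding an eigenvalue of $p_\mu$ with $|\lambda|>1$ and completing the argument.
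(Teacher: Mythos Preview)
Your reduction to a two–step recurrence in $(x^{k+1},x^k)$ and the determinant trick for the case $\gamma_1>1/L$ are fine (and a bit cleaner than the paper's explicit eigenvalue computation). The trouble is entirely in the rotation case $0<\gamma_1\le 1/L$. First, the polynomial identity you wrote for $|\Delta|^2-Y^2$ is incorrect: at $c=0$ (i.e.\ $a=b$) one has $p_\mu(\lambda)=(\lambda-1)(\lambda+b\mu)$, so $|\lambda_+|=1$, $|\lambda_-|=b$ and hence $|\lambda_+|^2+|\lambda_-|^2=1+b^2$, which forces $|\Delta|=Y$ exactly; thus $|\Delta|^2-Y^2$ vanishes at $c=0$ and cannot equal $2b^2(1-b)$ there. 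A quick numerical check at $(b,c)=(1,1)$ also gives $|\Delta|^2-Y^2=4$ whereas your formula gives $12$.

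Second---and this is the genuine obstruction, not just a typo---your underlying spectral claim is false on part of the admissible region. Differentiating the root $\lambda(c)$ that equals $1$ at $c=0$ gives
\[
\frac{d|\lambda|^2}{dc}\Big|_{c=0} \;=\; -2\,\Re\!\Bigl(\tfrac{\mu}{1+b\mu}\Bigr)\;=\;\frac{1-2b}{|1+b\mu|^2},
\]
which is \emph{negative} for every $b\in(1/2,1)$; the other root starts at $|\lambda|=b<1$. Hence for any $b\in(1/2,1)$ and all sufficiently small $c>0$ \emph{both} roots of $p_\mu$ lie strictly inside the unit disk. Concretely, at $(b,c)=(L\gamma_1,L\gamma_2)=(0.8,0.01)$ one finds $|\lambda_+|\approx 0.996$ and $|\lambda_-|\approx 0.803$, so the spectral radius of $\mathcal{M}$ is below $1$ and \ref{eq:OG} actually converges on this rotation operator. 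Your Lagrange argument therefore cannot be salvaged by fixing the algebra. It is worth noting that the paper's own argument here shows only that the \emph{spectral norm} of its state matrix $B$ exceeds $1$, which does not by itself imply non-convergence; since $B$ and your $\mathcal{M}$ are similar, the same numerical example shows that the rotation counter-example as stated fails for these stepsizes. So the gap you would need to close is not in your proof technique but in the counter-example itself.
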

\begin{proof}
    Assume that $L > 0$ and $\rho \geq \nicefrac{1}{2L}$. We start with the case when $\gamma_1 > \nicefrac{1}{L}$. Consider operator $F(x) = Lx$. This operator is $L$-Lipschitz. Moreover, $F$ is monotone and, as the result, it is $\rho$-negative comonotone for any $\rho \geq 0$. The iterates produced by \ref{eq:OG} with $x^0 \neq 0$ satisfy
    \begin{equation*}
        \tx^k = x^k - L\gamma_1 \tx^{k-1},\quad x^{k+1} = x^k - L\gamma_2\tx^k = (1 - L\gamma_2)x^k + L^2\gamma_2\gamma_1\tx^{k-1}\notag
    \end{equation*}
    implying that
    \begin{equation*}
       \begin{bmatrix} 
       x^{k+1} \\
       \tx^k
       \end{bmatrix} = 
       \begin{pmatrix}
       1 - \gamma_2 L & \gamma_1\gamma_2L^2\\ 
       1 & - \gamma_1 L
       \end{pmatrix}
       \begin{bmatrix} 
       x^k \\
       \tx^{k-1}
       \end{bmatrix}
    \end{equation*}
    The eigenvalues of the above $2 \times 2$ matrix can be computed analytically. One of them has the form 
    \begin{align*}
        -\frac{L\gamma_1 + L\gamma_2 + \sqrt{L^2\gamma_1^2 + L^2\gamma_2^2 + 2L^2\gamma_1\gamma_2 + 2L\gamma_1 - 2L\gamma_2 + 1} - 1}{2} &\\
        &\hspace{-3cm}< -\frac{1 + \sqrt{1 + 2L\gamma_2 + 2 - 2L\gamma_2 + 1} - 1}{2} = -1.
    \end{align*}
    The derivation above is verified symbolically in our \href{https://github.com/eduardgorbunov/Proximal_Point_and_Extragradient_based_methods_negative_comonotonicity/blob/main/Optimistic_Gradient/Symbolic_verification_for_Theorem_C10.ipynb}{codes}. That means we can select such starting setup $x^0$, $\tx^0$, such that \ref{eq:OG} diverges.
    
    Next, assume that $\gamma_1 \leq \nicefrac{1}{L}$ and consider $F(x) = L A x$, where $x \in \R^2$, $$A = \begin{pmatrix} \cos\theta & -\sin\theta \\ \sin\theta & \cos\theta \end{pmatrix},\quad \theta = \frac{2\pi}{3}.$$
    Operator $F$ is $L$-Lipschitz and $(\nicefrac{1}{2L})$-negative comonotone: for any $x,y \in \R^d$
    \begin{eqnarray*}
        \|F(x) - F(y)\| &=& L\|A(x-y)\| = L\|x-y\|,\\
        \langle F(x) - F(y), x-y \rangle &=& \|F(x) - F(y)\|\cdot \|x-y\| \cdot \cos \theta \\
        &=& \|F(x) - F(y)\|\cdot \|A(x-y)\| \cdot \cos \frac{2\pi}{3}\\
        &=& - \frac{1}{2L}\|F(x) - F(y)\|^2
    \end{eqnarray*}
    where we use the fact that $A$ is a rotation matrix. That is, $F(x)$ satisfies the conditions of the theorem. Taking into account that
    \begin{equation*}
        A = \begin{pmatrix} \cos\theta & -\sin\theta \\ \sin\theta & \cos\theta \end{pmatrix} = \begin{pmatrix} -\frac{1}{2} & -\frac{\sqrt{3}}{2} \\ \frac{\sqrt{3}}{2} & \frac{1}{2} \end{pmatrix},\quad A^2 = \begin{pmatrix} \cos(2\theta) & -\sin(2\theta) \\ \sin(2\theta) & \cos(2\theta) \end{pmatrix} = \begin{pmatrix} -\frac{1}{2} & \frac{\sqrt{3}}{2} \\ -\frac{\sqrt{3}}{2} & \frac{1}{2} \end{pmatrix},
    \end{equation*}
    we rewrite the update rule of \ref{eq:OG} similarly to the case $\gamma_1 > \nicefrac{1}{L}:$
    \begin{equation*}
       \begin{bmatrix} 
       x^{k+1} \\
       \tx^k
       \end{bmatrix} = 
       \begin{pmatrix}
       I - \gamma_2 LA & \gamma_1\gamma_2L^2 A^2\\ 
       I & - \gamma_1 LA
       \end{pmatrix}
       \begin{bmatrix} 
       x^k \\
       \tx^{k-1}
       \end{bmatrix}
       = B
       \begin{bmatrix} 
       x^k \\
       \tx^{k-1}
       \end{bmatrix}.
    \end{equation*}
    To prove the divergence of \ref{eq:OG}, we show that $B$ is expansive, i.e., its spectral norm $\|B\| > 1$, therefore, there exists a starting point for which \ref{eq:OG} does not converge. The spectral norm of $B$ has the following form
    \begin{align*}
        \|B\|^2 = c + \sqrt{c^2 - L^2\gamma_1^2},
    \end{align*}
    where $c = \frac{L^4\gamma_1^2\gamma_2^2 + L^2\gamma_1^2 + L^2\gamma_2^2 + L\gamma_2}{2} + 1$ (this derivation is verified symbolically in our \href{https://github.com/eduardgorbunov/Proximal_Point_and_Extragradient_based_methods_negative_comonotonicity/blob/main/Optimistic_Gradient/Symbolic_verification_for_Theorem_C10.ipynb}{codes}). Therefore, $\|B\|$ is well defined since $c > 1$ and  $L^2\gamma_1^2 \leq 1$, and, moreover, $\|B\| > 1$, which concludes the proof.  
\end{proof}

\end{document}